\newtheorem{lemma}[equation]{Lemma}
\newtheorem{proposition}[equation]{Proposition}
\newtheorem{theorem}[equation]{Theorem}
\newtheorem{corollary}[equation]{Corollary}
\newtheorem{openproblem}[equation]{Open problem}
\newtheorem{fact}[equation]{Fact}
\newtheorem{conjecture}[equation]{Conjecture}
\theoremstyle{remark}
\numberwithin{equation}{section}
\DeclareMathOperator*{\arth}{ar\,tanh}
\newcommand{\R}{{\mathbb R}}
\newcommand{\Rn}{{\R}^n}
\newcommand{\Bn}{{\mathbb B}^n}
\newcommand{\HUP}{{\mathbb H}^2}
\renewcommand{\H}{{\mathbb H}}
\newcommand{\Hn}{{\mathbb H}^n}
\newcounter{minutes}\setcounter{minutes}{\time}
\newcounter{hours}\setcounter{hours}{\time}
\begin{document}

\title[Balls in the triangular ratio metric]{\bf  Balls in the triangular ratio metric}

\author[S. Hokuni]{S. Hokuni}
\address{Department of Mathematics and Statistics, University of Turku,
FIN-20014 Turku, Finland}
\email{samhok@utu.fi}

\author[R. Kl\'en]{R. Kl\'en}
\address{Department of Mathematics and Statistics, University of Turku,
FIN-20014 Turku, Finland}
\email{riku.klen@utu.fi}

\author[Y. Li]{Y. Li}
\address{College of Science,
Central South University of
Forestry and Technology, Changsha,  Hunan 410004, People's Republic
of China}
\email{yaxiangli@163.com}

\author[M. Vuorinen]{M. Vuorinen}
\address{Department of Mathematics and Statistics, University of Turku,
FIN-20014 Turku, Finland}
\email{vuorinen@utu.fi}

\keywords{Local convexity, metric ball, triangular ratio metric, quasihyperbolic metric, $j$-metric.}
\subjclass[2010]{Primary 51M10; Secondary  52A20}

%\date{}

\begin{abstract}
  We consider the triangular ratio metric and estimate the radius of convexity for balls in some special domains and prove the inclusion relations of metric balls defined by the triangular ratio metric, the quasihyperbolic metric and the $j$-metric.
\end{abstract}

\maketitle

\def\thefootnote{}
\footnotetext{
\texttt{\tiny File:~\jobname .tex,
          printed: \number\year-\number\month-\number\day,
          \thehours.\ifnum\theminutes<10{0}\fi\theminutes}
}
\makeatletter\def\thefootnote{\@arabic\c@footnote}\makeatother

%===============================================================================
%==================================================================================================================
%==================================================================================================================
%==================================================================================================================
\section{Introduction}
%==================================================================================================================
%==================================================================================================================
%==================================================================================================================

Geometric function theory makes use of several metrics for subdomains
of $\mathbb{R}^n$. It has turned out that sometimes hyperbolic metric or more generally, hyperbolic
type metrics, are more natural than  the Euclidean metric, because of their better invariance or quasiinvariance properties under well-known classes of mappings such as M\"obius transformations, bilipschitz
or quasiconformal maps. In the recent years many authors have contributed to this field, see e.g.
 \cite{ha, himps,hmm,  Kl3,  KV, KV2, l,mv, rt, rt2, v1, v2, w}. On the other hand, hyperbolic type metrics are sometimes difficult to estimate and it is desirable to find simple
expressions to serve as comparison functions. Two such expressions,
the visual angular metric and the triangular ratio metric, were recently studied in \cite{klvw}. Here our
goal is to  continue the study of the triangular ratio metric for proper subdomains of $\Rn$. In particular,
we study the local convexity of balls in this metric for some
simple domains. For some other metrics, results of this type were
recently proved by R. Kl\'en in \cite{k08,k10}, in answer to a question posed in \cite{vu07}.

We study also inclusion relations between balls in different metrics. Some of the metrics we study are the triangular ratio metric, the quasihyperbolic metric and the $j$-metric. We consider the inclusion relations in general domains as well as in some specific examples as the punctured space and the half-space. These kind of results for  hyperbolic type metrics have been studied in \cite{KV,KV2}.

For a domain $G \subsetneq \mathbb{R}^n$, and $x,y \in G$, we define the triangular ratio metric \cite{ba,ha} by
\begin{equation}\label{definiton}
  s_G(x,y)= \sup_{z \in \partial G} \frac{|x-y|}{|z-x|+|z-y|} \in [0,1],
\end{equation}
the $j$-metric \cite{gp,vu85} by
\[
  j_G(x,y) =
 \log \left( 1+\frac{|x-y|}{\min \{ d_G(x),d_G(y) \}} \right),\]
where $d_G(z) = d(z, \partial G)$,
and the quasihyperbolic metric \cite{gp} by
\[
  k_G(x,y) = \inf_\gamma \int_\gamma \frac{|dz|}{d_G(z)},
\]
where the infimum is taken over all rectifiable curves $\gamma \subset G$ joining $x$ and $y$. G.J. Martin and B.G. Osgood proved in \cite[page 38]{mo} the following formula for the quasihyperbolic distance:
\[
  k_G(x,y) = \sqrt{\alpha^2 + \log^2 \frac{|x|}{|y|}}, \quad x,y \in G = \Rn \setminus \{ 0 \},
\]
where $\alpha = \measuredangle (x,0,y)$. For a metric space $(G,m)$ we define the metric ball for $x \in G$ and $r>0$ by $B_m(x,r) = \{ y \in G \colon m(x,y) < r \}$.

In Sections 3-5 we consider local convexity properties of balls in triangular ratio metric. In Section 3 we consider the punctured space $\Rn \setminus \{ 0 \}$, in Section 4 the half-space $\Hn$ and in Section 5 the punctured half-space $\Hn \setminus \{ e_n \}$ and polygons $P \subset \R^2$. In Sections 6 and 7 we study the inclusion of balls defined by the triangular ratio metric, the quasihyperbolic metric and the $j$-metric. In Section 6 we consider these metrics in general domains and in Section 7 in the punctured space and in the half-space.

Our main results are the following two theorems.

\begin{theorem}\label{mainthm1}
  (1) Let $G = \mathbb{R}^n \setminus \{ z \}$, $z\in\mathbb{R}^n$, $x \in G$ and $r > 0$. The metric ball $B_s (x,r)$ is (strictly) convex if $r \leq 1/2$ $(r < 1/2)$.

  \noindent (2) Let $x \in G = \mathbb{H}^n$ and $r\in(0,1)$. Then
  \[
    B_s(x,r)=B^n \left( x-e_n x_n \left( 1-\frac{1+r^2}{1-r^2} \right) ,\frac{2x_n r}{1-r^2} \right)
  \]
  and $B_s(x,r)$ is thus strictly convex.

  \noindent (3) Let $x =(x_1,x_2) \in G= \HUP \setminus \{ e_n \}$ with $x_2 < |x_1|$ and $r \in (0,r_0]$, where
  \[
    r_0 = \frac{\sqrt{x_1^2+x_2^2}-\sqrt{2}x_2}{|x_1|+x_2}.
  \]
  Then $B_s(x,r)$ is convex.

  \noindent (4) Let $P \subset \mathbb{R}^2$ be a polygon and $x \in P$. Then $B_s(x,r)$ is convex for all $r \in (0,1/2]$. Moreover, if $P$ is convex then $B_s(x,r)$ is convex for all $r \in (0,1)$.
\end{theorem}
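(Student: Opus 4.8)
The plan is to prove Theorem \ref{mainthm1}(4) in two parts, first establishing the general polygon case with radius bound $1/2$, then the sharper bound for convex polygons. The guiding principle throughout is that the triangular ratio distance $s_G$ is governed by the nearest relevant boundary features, and for a polygon the boundary consists of finitely many line segments, so locally $s_G$ behaves like the corresponding distance in a half-plane or a punctured-plane type configuration. The key observation is that the supremum in \eqref{definiton} is attained at a vertex or along an edge of $P$, which reduces the analysis to finitely many elementary pieces.

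\medskip

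First I would treat the convex polygon case, since it is conceptually cleaner and yields the stronger conclusion. When $P$ is convex and bounded by edges lying on lines $L_1,\dots,L_m$, each $L_i$ determines a half-plane $H_i$ containing $P$, and $P=\bigcap_i H_i$. The plan is to compare $s_P$ with the half-plane metrics $s_{H_i}$. Since adding boundary points only increases the supremum in \eqref{definiton}, one has $s_P(x,y)=\max_i\, (\text{contribution from the part of }\partial P\text{ on }L_i)$, and on each edge the relevant supremum is dominated by $s_{H_i}(x,y)$. Consequently $B_{s_P}(x,r)=\bigcap_i B_{s_{H_i}}(x,r)$ for an appropriate intersection of half-plane balls. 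By part (2) of the theorem, each $B_{s_{H_i}}(x,r)$ is a Euclidean disk, hence strictly convex, and an intersection of convex sets is convex. This gives convexity of $B_{s_P}(x,r)$ for all $r\in(0,1)$.

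\medskip

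For a general (possibly nonconvex) polygon, the half-plane comparison breaks down near reflex vertices, so I would instead localize. The strategy is to show that for $r\le 1/2$ the ball $B_{s_P}(x,r)$ stays inside a region where the relevant boundary piece is a single convex feature — either one edge or the two edges meeting at a vertex forming a corner that the ball sees convexly. For a single edge the model is the half-plane, handled by part (2); for a vertex the model is governed by the same two-edge configuration, and here the bound $r\le 1/2$ is exactly what guarantees, as in part (1) for the punctured space (where the threshold $1/2$ already appears), that the ball does not wrap around the vertex in a way that destroys convexity. I would verify that for $r\le 1/2$ the level set $\{s_P(x,\cdot)=r\}$ is a concatenation of arcs of the Euclidean disks from part (2), meeting with the correct (outward-convex) orientation, so that the enclosed region is convex.

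\medskip

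The main obstacle is controlling the behavior near vertices, especially reflex vertices of a nonconvex polygon, and pinning down precisely why $1/2$ is the sharp threshold. Near a vertex the two adjacent edges contribute two different half-plane disks to the boundary of $B_{s_P}(x,r)$, and the difficulty is to show these arcs join convexly rather than forming an inward-pointing cusp. I expect this to require a careful comparison of the two defining Euclidean disks and an argument that the crossover between ``edge-dominated'' and ``vertex-dominated'' regimes occurs without creating concavity as long as $r\le 1/2$; the constant $1/2$ should emerge as the value below which the disk from part (2) is small enough to avoid the far edge. This is the step where the estimate must be made quantitative rather than merely structural.
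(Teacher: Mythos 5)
Your proposal only addresses part (4) of the theorem (taking parts (1) and (2) as given), and within part (4) the general-polygon case is left with an acknowledged, and in fact unnecessary, gap. The paper's argument for $r\le 1/2$ is a one-liner: by the identity \eqref{intersection}, $B_{s_P}(x,r)=\bigcap_{z\in\partial P}B_{s_{\mathbb{R}^2\setminus\{z\}}}(x,r)$, because $s_P(x,y)$ is by definition the supremum over $z\in\partial P$ of the single-puncture quantities $s_{\mathbb{R}^2\setminus\{z\}}(x,y)=|x-y|/(|z-x|+|z-y|)$; each set in the intersection is convex for $r\le 1/2$ by part (1), hence so is the intersection. No localization near vertices, no matching of arcs, and no ``quantitative crossover'' analysis is needed --- the threshold $1/2$ is inherited wholesale from the punctured plane. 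Your plan to show that the level set is a concatenation of half-plane disk arcs ``meeting with the correct orientation'' is precisely the hard step you flag as unresolved, and it is bypassed entirely by the intersection identity. (The paper does route the argument through angular domains at the vertices, via its Lemmas \ref{alpha<pi} and \ref{alpha>pi}, but the $r\le1/2$ bound in the reflex case still comes from \eqref{intersection} plus Theorem \ref{punctspace}.)

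In the convex case your conclusion and overall strategy (intersecting half-plane balls, each a Euclidean disk by part (2)) match the paper's, but your justification of the key identity $B_{s_P}(x,r)=\bigcap_i B_{s_{H_i}}(x,r)$ proves only the wrong inclusion. From ``the supremum over an edge is dominated by $s_{H_i}$'' you get $s_P\le\max_i s_{H_i}$, i.e.\ $B_{s_P}(x,r)\supseteq\bigcap_i B_{s_{H_i}}(x,r)$; a set \emph{containing} a convex set need not be convex, so this does not finish the proof. What is needed is the reverse inequality $s_P\ge\max_i s_{H_i}$, i.e.\ that the maximal inscribed ellipse with foci $x,y$ actually touches $\partial P$ on an edge at the foot point of some line $L_i$ rather than strictly inside a corner. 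This is true for convex $P$ (if the minimizer of $|z-x|+|z-y|$ over $L_{i_0}$ lay outside the edge $e_{i_0}$, it would lie outside $\overline{P}$, and the convexity of the ellipse $\{z:|z-x|+|z-y|\le |x-y_{i_0}'|\}$ would force a boundary point of $P$ with a strictly smaller sum, a contradiction), and it is exactly the content of the paper's Proposition \ref{ellipse} for angular domains; but it must be argued, not inferred from domination in one direction.
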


\begin{theorem}\label{mainthm2}
  (1) Suppose that $G\subset \mathbb{R}^n$ is a domain. For each $x\in G$ and $r\in (0,1)$, we have $$B^n \left( x, \frac{2r}{1+r}d_G(x)\right) \subset B_s(x, r)\subset B^n \left( x, \frac{2r}{1-r}d_G(x) \right) .$$

  \noindent (2) Suppose that $G\subset \mathbb{R}^n$ is a domain. For each $x\in G$ and $r\in (0,1)$, we have $$B_j(x, \log(1+2r))\subset B_s(x, r)$$ and the inclusion is sharp if there exists a point $w$ in $\partial B_s(x,r) $ such that $d_G(x)=d_G(w)$ and $$\partial G\cap S^{n-1}(w,d_G(w))\cap S^{n-1}(x,d_G(x))\neq \emptyset.$$ Moreover, for each $x\in G$ and $r\in (0,\frac{1}{3})$, we have $$B_s(x, r)\subset B_j \left( x, \log(1+\frac{2r}{1-3r}) \right). $$

\noindent (3) Let $G=\mathbb{R}^n\setminus \{0\}$. For each $x\in G$ and $r\in (0,1)$, we have $$B\left( x, \frac{2r}{1+r}|x| \right) \subset B_s(x, r)\subset B \left( x, \frac{2r}{1-r}|x| \right) .$$ Moreover, the inclusions are sharp.

\noindent (4) Let $G=\mathbb{R}^n\setminus \{0\}$. For each $x\in G$ and $r\in (0,1)$, we have $$B_j(x, m)\subset B_s(x, r)\subset B_j(x, M),$$ where $m=\log(1+2r)$, and $M=\log(1+\frac{2r}{1-r})$.  Moreover, the inclusions are sharp.

\end{theorem}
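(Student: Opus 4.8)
The whole of Theorem~\ref{mainthm2} reduces to two elementary pointwise comparisons between $s_G$, the Euclidean distance $|x-y|$, and the boundary distance $d_G$. The plan is first to establish, for all $x,y\in G$,
$$\frac{|x-y|}{2d_G(x)+|x-y|}\le s_G(x,y)\le \frac{|x-y|}{2d_G(x)-|x-y|},$$
the right-hand inequality being used only when $|x-y|<2d_G(x)$. The lower bound follows by testing the supremum in \eqref{definiton} at a nearest boundary point $z_0$ of $x$, where $|z_0-x|=d_G(x)$ and $|z_0-y|\le d_G(x)+|x-y|$; the upper bound follows from $|z-x|\ge d_G(x)$ and $|z-y|\ge d_G(x)-|x-y|$ for every $z\in\partial G$. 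Both bounds are increasing in $|x-y|$, so setting each equal to $r$ and solving gives the radii $\frac{2r}{1+r}d_G(x)$ and $\frac{2r}{1-r}d_G(x)$, which is exactly part~(1).

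For part~(2) I would sharpen the upper estimate to $s_G(x,y)\le \frac{|x-y|}{2\min\{d_G(x),d_G(y)\}}=\tfrac12\bigl(e^{j_G(x,y)}-1\bigr)$, using $|z-x|\ge d_G(x)$, $|z-y|\ge d_G(y)$ and $d_G(x)+d_G(y)\ge 2\min\{d_G(x),d_G(y)\}$; this at once gives $B_j(x,\log(1+2r))\subset B_s(x,r)$. For the reverse inclusion I would combine the lower bound of the first paragraph (which yields $|x-y|<\frac{2r}{1-r}d_G(x)$ whenever $s_G(x,y)<r$) with the $1$-Lipschitz inequality $d_G(x)\le d_G(y)+|x-y|$; eliminating $d_G(x)$ produces $\frac{|x-y|}{\min\{d_G(x),d_G(y)\}}<\frac{2r}{1-3r}$ exactly when $1-3r>0$, which explains the restriction $r<\tfrac13$. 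The sharpness clause is where the real content lies: under the stated hypothesis there is $w\in\partial B_s(x,r)$ with $d_G(w)=d_G(x)$ and a common point $z\in\partial G\cap S^{n-1}(x,d_G(x))\cap S^{n-1}(w,d_G(w))$, and for this $z$ the identity $|z-x|+|z-w|=2d_G(x)=2\min\{d_G(x),d_G(w)\}$ turns the upper estimate into an equality $s_G(x,w)=\tfrac12(e^{j_G(x,w)}-1)$, so the two balls touch at $w$.

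Parts~(3) and~(4) are the specialization $G=\Rn\setminus\{0\}$, where $\partial G=\{0\}$ forces $d_G(z)=|z|$ and collapses the supremum in \eqref{definiton} to the single value $s_G(x,y)=\frac{|x-y|}{|x|+|y|}$. Substituting $d_G(x)=|x|$ into parts~(1) and~(2) already yields the stated inclusions, so only sharpness remains, and for this I would optimize $s_G$ along lines through the origin: with $|x-y|$ fixed, $|y|$ ranges between $\bigl||x|-|x-y|\bigr|$ and $|x|+|x-y|$, and the endpoints (attained when $y$ is collinear with $0$ and $x$) extremize $s_G$. Concretely, taking $y$ on the segment from $x$ towards $0$ with $s_G(x,y)=r$ simultaneously realizes the inner Euclidean radius $\frac{2r}{1+r}|x|$ and the outer $j$-radius $\log(1+\frac{2r}{1-r})$; taking $y$ on the ray from $0$ through $x$ beyond $x$ realizes the outer Euclidean radius $\frac{2r}{1-r}|x|$; and taking $y$ on the sphere $|y|=|x|$ with $s_G(x,y)=r$ realizes the inner $j$-radius $\log(1+2r)$. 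Each such $y$ lies on the boundary of one ball and of the comparison ball at once, giving sharpness.

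The inequalities themselves are routine triangle-inequality bookkeeping; the main obstacle is the sharpness, namely identifying the exact extremal configurations---collinearity with the puncture, equidistant points on a common sphere, and the common-nearest-point hypothesis in part~(2)---and checking that each chain of estimates collapses to an equality there.
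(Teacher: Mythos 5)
Your parts (1)--(3) and the first half of (4) are correct and essentially reproduce the paper's argument: the two-sided pointwise bound $\frac{|x-y|}{2d_G(x)+|x-y|}\le s_G(x,y)\le\frac{|x-y|}{2d_G(x)-|x-y|}$ packages exactly the case analysis in the paper's Theorem \ref{thm2-2}; your derivation of $B_j(x,\log(1+2r))\subset B_s(x,r)$ and of its sharpness via the common nearest boundary point $z$ matches Theorem \ref{thm2-1}; and your sharpness points for Theorem \ref{thm3-1} and for the inner ball in Theorem \ref{thm3-5} are the same as (or equivalent to) the paper's collinear and equal-modulus choices. One genuine difference: for the bound $B_s(x,r)\subset B_j(x,\log(1+\frac{2r}{1-3r}))$, $r<1/3$, you eliminate $d_G(x)$ directly using the $1$-Lipschitz property of $d_G$, whereas the paper routes the argument through $B^n(x,\rho d_G(x))\subset B_k(x,\log\frac{1}{1-\rho})$ and $j_G\le k_G$. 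Your route is more elementary and self-contained; the paper's makes the parallel quasihyperbolic corollary fall out for free.

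The gap is in the outer inclusion of part (4). You assert that substituting $d_G(x)=|x|$ into parts (1) and (2) ``already yields the stated inclusions,'' but part (2) only gives $B_s(x,r)\subset B_j\left(x,\log(1+\frac{2r}{1-3r})\right)$, and only for $r<1/3$, while part (4) claims the strictly smaller radius $\log(1+\frac{2r}{1-r})$ for every $r\in(0,1)$. This stronger statement is special to the punctured space and needs its own (short) argument, which your proposal never supplies: with $s_G(x,y)=\frac{|x-y|}{|x|+|y|}$ and, say, $|x|\le|y|$, the condition $s_G(x,y)\le r$ forces $|y|\le\frac{1+r}{1-r}|x|$ and hence $|x-y|\le r(|x|+|y|)\le\frac{2r}{1-r}|x|=\frac{2r}{1-r}\min\{|x|,|y|\}$, and the case $|y|<|x|$ follows from the invariance of both $s_G$ and $j_G$ under inversion about the origin --- this is exactly the paper's proof of Theorem \ref{thm3-5}. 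Your remark that the collinear point $y=\frac{1-r}{1+r}x$ realizes the outer $j$-radius $\log(1+\frac{2r}{1-r})$ only establishes sharpness of that inclusion, not the inclusion itself.
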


%===============================================================================
%==================================================================================================================
%==================================================================================================================
%==================================================================================================================
\section{Preliminary results}
%==================================================================================================================
%==================================================================================================================
%==================================================================================================================
%===============================================================================
\begin{fact}
\label{slopedef}
Let $r=r(\theta)$ be a function given in polar coordinates. Then the slope of the curve $r=r(\theta)$ at the point $(r,\theta)$ is
\begin{equation*}
\frac{r(\theta)+\tan(\theta)\frac{dr}{d\theta}}{-r(\theta)\tan(\theta)+\frac{dr}{d\theta}}.
\end{equation*}
\end{fact}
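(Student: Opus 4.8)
The plan is to pass from polar to Cartesian coordinates and compute the slope $dy/dx$ by the chain rule. First I would introduce $\theta$ as the parameter and write the Cartesian coordinates of a point on the curve as $x(\theta)=r(\theta)\cos\theta$ and $y(\theta)=r(\theta)\sin\theta$. Since $x$ and $y$ are both functions of $\theta$ along the curve, the slope is $\frac{dy}{dx}=\frac{dy/d\theta}{dx/d\theta}$, valid wherever $dx/d\theta\neq 0$.

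Next I would differentiate both coordinate functions with respect to $\theta$ by the product rule, obtaining $\frac{dx}{d\theta}=\frac{dr}{d\theta}\cos\theta-r(\theta)\sin\theta$ and $\frac{dy}{d\theta}=\frac{dr}{d\theta}\sin\theta+r(\theta)\cos\theta$. Forming the quotient then expresses the slope as
\[
\frac{dy}{dx}=\frac{\frac{dr}{d\theta}\sin\theta+r(\theta)\cos\theta}{\frac{dr}{d\theta}\cos\theta-r(\theta)\sin\theta}.
\]
The final step is to divide both numerator and denominator by $\cos\theta$; this converts each $\sin\theta$ into $\tan\theta$ and leaves exactly the claimed expression $\frac{r(\theta)+\tan(\theta)\frac{dr}{d\theta}}{-r(\theta)\tan(\theta)+\frac{dr}{d\theta}}$.

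Since every step is an elementary application of the chain and product rules, I do not expect any genuine obstacle. The only points deserving a word of care are the directions where $\cos\theta=0$, at which $\tan\theta$ is undefined so the stated tangent form degenerates even though the intermediate quotient above remains valid, and the points where $\frac{dr}{d\theta}\cos\theta=r(\theta)\sin\theta$, at which the tangent line is vertical and the slope is infinite.
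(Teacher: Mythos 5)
Your derivation is correct and complete: parametrizing by $\theta$, computing $dx/d\theta$ and $dy/d\theta$ by the product rule, and dividing through by $\cos\theta$ yields exactly the stated expression, and your caveats about $\cos\theta=0$ and vertical tangents are apt. The paper records this statement as a Fact with no proof at all, so there is nothing to compare against; your argument is the standard one that the authors evidently had in mind.
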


 The supremum in the definition \eqref{definiton}  of $s$ is attained at a point $z$ that is on a maximal ellipsoid with focii at $x$ and $y$. Therefore it is clear that $s$ is monotone with respect to the domain, which means that if $G \subset \mathbb{R}^n$ and $G' \subset G$ are domains and $x,y \in G'$ then $s_{G}(x,y) \le s_{G'}(x,y)$.

By the definition and the monotonicity with respect to domain it is easy to prove that for all $x \in G$ and $r \in (0,1)$
\begin{equation}\label{intersection}
  B_{s_G}(x,r) = \bigcap_{z \in \partial G} B_{s_{\mathbb{R}^n \setminus \{ z \}}}(x,r).
\end{equation}

%In an open set $G \subsetneq \Rn$ we define the distance ratio distance for $x,y \in G$ by
%\[
%  \log \left( 1+\frac{|x-y|}{\min \{ d(x),d(y) \}} \right),
%\]
%where $d(z) = d(z, \partial G)$.
We denote the hyperbolic distance by $\rho_G$, where $G$ is either the unit ball $\Bn$ or the upper half-space $\Hn$ (\cite{Beardon83}, \cite[pp.19-32]{Vu2}).

\begin{lemma}\label{le61}
Let $a,b>0$.\\
(1) The function $f_1(r)\equiv\frac{\log(1+a r)}{r}$ is strictly decreasing from $(0,\infty)$ onto $(0, a)$.\\
(2) The function $f_2(r)\equiv\frac{r}{2-r}-\arth\,r$ is strictly decreasing from $(0,1)$ onto $(-\infty, 0)$.\\
(3) The function $f_3(r)\equiv\frac{r}{r(r-2)\log(1-r)}$ is strictly decreasing from $(0,1)$ onto $(0, 1/2)$.\\
(4) The function $f_4(t)\equiv\log\,t-\frac{t-1}{t+1}$ is increasing from $(1,\infty)$ onto $(0, \infty)$.
\end{lemma}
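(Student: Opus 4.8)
The plan is to treat all four parts by the same two-step scheme: first fix the sign of the derivative on the stated interval to obtain strict monotonicity, and then evaluate the one-sided limits at the two endpoints to identify the image. Note that only the parameter $a$ actually enters the formulas, so the hypothesis $b>0$ plays no role and no case analysis is needed.

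For (1) I would first observe that the substitution $t=ar$ turns $f_1$ into $a\,\frac{\log(1+t)}{t}$, reducing the claim to the classical fact that $\log(1+t)/t$ strictly decreases from $1$ to $0$; alternatively, direct differentiation gives $f_1'(r)=r^{-2}\bigl(\tfrac{ar}{1+ar}-\log(1+ar)\bigr)$, and setting $g(r)=\tfrac{ar}{1+ar}-\log(1+ar)$ one checks $g(0)=0$ and $g'(r)=-a^2r/(1+ar)^2<0$, so $g<0$ and $f_1$ strictly decreases. The endpoint limits $f_1(0^+)=a$ (Taylor expansion of the logarithm) and $\lim_{r\to\infty}f_1(r)=0$ give the image $(0,a)$.

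Parts (2) and (4) are settled by a transparent derivative test. For (2) I compute $f_2'(r)=\frac{2}{(2-r)^2}-\frac{1}{1-r^2}$, and the inequality $f_2'(r)<0$ is equivalent, after clearing denominators, to $3r^2-4r+2>0$, which holds on all of $\R$ since its discriminant is negative; combined with $f_2(0^+)=0$ and $f_2(1^-)=-\infty$ this yields the image $(-\infty,0)$. For (4), $f_4'(t)=\frac{1}{t}-\frac{2}{(t+1)^2}$ and $f_4'(t)>0$ reduces to $(t+1)^2>2t$, i.e.\ $t^2+1>0$; together with $f_4(1^+)=0$ and $f_4(t)\to\infty$ this gives the image $(0,\infty)$.

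The main obstacle is (3), where I take $f_3(r)=\frac{r}{(r-2)\log(1-r)}$ (this is the reading consistent with the stated image $(0,\tfrac12)$). The endpoint limits are $f_3(0^+)=\tfrac12$, since $(r-2)\log(1-r)=2r+O(r^3)$, and $f_3(1^-)=0$, matching the claim. Monotonicity is not immediate from $f_3'$, so I would instead show that the reciprocal $1/f_3(r)=\frac{(r-2)\log(1-r)}{r}$ strictly increases from $2$ to $\infty$. Differentiating and multiplying by $r^2$ collapses the numerator to $\phi(r)=2\log(1-r)+r+\frac{r}{1-r}$; since $\phi(0)=0$ and, writing $u=1/(1-r)$, one finds $\phi'(r)=u^2-2u+1=(u-1)^2>0$, the function $\phi$ is positive on $(0,1)$, so $1/f_3$ increases and $f_3$ strictly decreases. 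Verifying this perfect-square form of $\phi'$ is the step requiring the most care, but it closes the argument cleanly.
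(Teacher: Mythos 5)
Your proof is correct, and for parts (1) and (3) it takes a genuinely different route from the paper. The paper handles (1) and (3) with the monotone form of l'H\^opital's rule: it writes each $f_i$ as a quotient $f_{i1}/f_{i2}$ of functions vanishing at $0^+$ and shows that the ratio of derivatives, $\frac{a}{1+ar}$ in case (1) and $\frac{2(1-r)}{(2-r)^2}$ in case (3), is strictly decreasing, which immediately gives monotonicity of the quotient; the endpoint values then come from ordinary l'H\^opital. You instead differentiate directly and reduce the sign question to an auxiliary function ($g(r)=\frac{ar}{1+ar}-\log(1+ar)$ in (1), and $\phi(r)=2\log(1-r)+r+\frac{r}{1-r}$ with $\phi'=(u-1)^2$, $u=1/(1-r)$, in (3), after passing to the reciprocal $1/f_3$). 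Your route is more elementary and self-contained, at the cost of the slightly more delicate algebra you flag in (3); the paper's route is shorter but silently relies on the l'H\^opital monotone rule as a known tool. Parts (2) and (4) are essentially identical in both treatments (your computation $f_2'<0\iff 3r^2-4r+2>0$ matches the paper's numerator $-3r^2+4r-2<0$), and in (4) your derivative $\frac{t^2+1}{t(t+1)^2}$ is in fact the correct one --- the paper's displayed $f_4'(t)=\frac{t(1+t)^2}{t^2+1}$ is its reciprocal, a typo that does not affect the sign. Your reading of the misprinted $f_3$ as $\frac{r}{(r-2)\log(1-r)}$ is exactly the function the paper's own proof analyzes, and your observation that $b$ never appears is accurate.
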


\begin{proof}
(1) Let $f_{11}(r)=\log(1+a r)$ and $f_{12}(r)=r$. It is clear that $f_{11}(0^+)=f_{12}(0^+)=0$. By differentiation,
$$\frac{f'_{11}(r)}{f'_{12}(r)}=\frac{a}{1+ar}$$
which is strictly decreasing in $(0,\infty)$. Therefore, we get the monotonicity of $f_1$. The limiting values are clear by l'H\^opital's rule.

(2) By differentiation,
$$f'_2(r)=\frac{-3r^2+4r-2}{(1-r^2)(2-r)^2}$$
which is negative. Therefore, $f_2$ is strictly decreasing. The limiting values are clear.

(3)  Let $f_{31}(r)=\frac{r}{2-r}$ and $f_{32}(r)=-\log(1-r)$. It is clear that $f_{31}(0^+)=f_{32}(0^+)=0$. By differentiation,
$$\frac{f'_{31}(r)}{f'_{32}(r)}=\frac{2(1-r)}{(2-r)^2}\equiv 2\phi(r).$$
By differentiation, we have
$$\phi'(r)=-\frac{r}{(2-r)^3}<0.$$ Then $\phi$ is strictly decreasing in $(0,1)$. Therefore, we get the monotonicity of $f_3$. The limiting values are clear by l'H\^opital's rule.

(4) By differentiation,
$$f'_4(t)=\frac{t(1+t)^2}{t^2+1}>0,$$
Therefore, $f_4$ is strictly increasing. The limiting values are clear.
\end{proof}

%==================================================================================================================
%==================================================================================================================
%==================================================================================================================
%===============================================================================
\section{Convexity of balls in punctured space}
%==================================================================================================================
%==================================================================================================================
%==================================================================================================================

We consider the metric $s$ in the punctured space $\R^n \setminus \{ 0 \}$. We first compare $s$ with the hyperbolic metric $\rho$ and then study convexity of metric balls $B_s(x,r)$.

By the definition it is clear that for $x,y \in G = \R^n \setminus \{ 0 \}$, we have
 \[
  s_{G}(x,y) = \frac{|x-y|}{|x|+|y|}.
\]
This special case of the triangular ratio metric has been studied in \cite{AseevSychevTetenov05, ba}.

\begin{theorem}
The inequality $ s_{G}(x,y) \le \frac{1}{\log 3} j_{G}(x,y)$ holds for all $x\,,y\in G=\R^n \setminus \{ 0 \}$.
\end{theorem}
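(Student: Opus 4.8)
The plan is to reduce both geometric quantities to a single scalar inequality and then invoke Lemma~\ref{le61}(1). First I would record the explicit forms of the two metrics on $G=\R^n\setminus\{0\}$. Since $\partial G=\{0\}$ we have $d_G(z)=|z|$, so that
\[
  j_G(x,y)=\log\Bigl(1+\frac{|x-y|}{\min\{|x|,|y|\}}\Bigr),
\]
while the special case noted just above gives $s_G(x,y)=\frac{|x-y|}{|x|+|y|}\in[0,1]$.

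Next I would relate the arguments of these two expressions. The elementary estimate $\min\{|x|,|y|\}\le\frac{|x|+|y|}{2}$ yields
\[
  \frac{|x-y|}{\min\{|x|,|y|\}}\ge\frac{2|x-y|}{|x|+|y|}=2\,s_G(x,y),
\]
and, since $\log$ is increasing, this gives $j_G(x,y)\ge\log\bigl(1+2\,s_G(x,y)\bigr)$.

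It therefore suffices to prove the one-variable inequality $s\log 3\le\log(1+2s)$ for $s=s_G(x,y)\in[0,1]$. For $s=0$ both sides vanish. For $s\in(0,1]$ I would apply Lemma~\ref{le61}(1) with $a=2$: the function $f_1(r)=\frac{\log(1+2r)}{r}$ is strictly decreasing on $(0,\infty)$, hence on $(0,1]$ it is bounded below by its value at $r=1$, namely $f_1(1)=\log 3$. Thus $\log(1+2s)\ge s\log 3$, and combining with the previous display gives $s_G(x,y)\le\frac{1}{\log 3}\,j_G(x,y)$, as claimed.

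The argument is short and I do not anticipate a serious obstacle; the only point requiring care is the choice of the intermediate comparison $\log(1+2s)$, which is dictated by the bound $\min\{|x|,|y|\}\le\frac{|x|+|y|}{2}$ and makes the constant $\frac{1}{\log 3}$ emerge naturally as the extremal value at $s=1$, i.e.\ when $0$ lies on the segment $[x,y]$. If one wished, the concavity of $r\mapsto\log(1+2r)$ on $[0,1]$ (its graph lying above the chord through $(0,0)$ and $(1,\log 3)$) gives an alternative, lemma-free justification of the same scalar inequality.
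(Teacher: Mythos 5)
Your proof is correct and takes essentially the same route as the paper's: both rest on Lemma~\ref{le61}(1) together with the two elementary facts $\min\{|x|,|y|\}\le\frac{|x|+|y|}{2}$ and $|x-y|\le|x|+|y|$ (equivalently $s_G\le 1$). The only difference is bookkeeping — you first reduce to the one-variable inequality $\log(1+2s)\ge s\log 3$ and apply the lemma with $a=2$, whereas the paper applies the lemma with $a=1/|x|$ directly to the ratio $j_G/s_G$ — so the two arguments are interchangeable.
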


\begin{proof}Without loss of generality, we may assume that $|x|\leq |y|$. Then
for all $x\,,y\in G$, by Lemma \ref{le61} (1), we have
\begin{eqnarray*}
\frac{j_{G}(x,y)}{s_{G}(x,y)}=\frac{\log \left( 1+\frac{|x-y|}{|x|} \right) }{\frac{|x-y|}{|x|+|y|}}
\ge \log \left( 1+\frac{|x|+|y|}{|x|} \right)
\ge \log 3.
\end{eqnarray*}
Equality holds if $0,x,y$ are collinear and $|x|=|y|$.
\end{proof}

\begin{openproblem}We notice that $s_G(x,y)\leq \frac{1}{\log 3} j_G(x,y)<j_G(x,y)$ for $G=\mathbb{R}^n\setminus\{0\}$. A natural problem is that is it true that $s_G\leq j_G$ holds for all subdomains $G\subset \mathbb{R}^n$?
\end{openproblem}

We only get the following constant which is larger than $1$ but less than $2$. But we note that very recently the inequality  $s_G(x,y) \leq \frac{1}{\log 3}j_G(x,y)$ was proved in \cite{CHKV}.

\begin{theorem} Suppose that $G\subset \mathbb{R}^n$ is a domain. Then for all $x,y\in G$ we have $$s_G(x,y)\leq \frac{1}{\log2}j_G(x,y).$$
\end{theorem}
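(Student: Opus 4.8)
The plan is to reduce the inequality to a one‑variable estimate that isolates the interplay of the two metrics and then to invoke Lemma~\ref{le61}(1). Assume without loss of generality that $d_G(x)\le d_G(y)$ and write $d=\min\{d_G(x),d_G(y)\}=d_G(x)$; the case $x=y$ is trivial, so suppose $x\neq y$. The crucial observation is elementary: for every boundary point $z\in\partial G$ one has $|z-x|\ge d_G(x)$ and $|z-y|\ge d_G(y)\ge d$, simply because $d_G(\cdot)$ is the distance to $\partial G$. Hence $|z-x|+|z-y|\ge 2d$ for all $z\in\partial G$. Since the numerator $|x-y|$ in \eqref{definiton} does not depend on $z$, taking the supremum over $z$ is the same as taking the infimum of the denominator, and therefore $s_G(x,y)=|x-y|/\inf_{z\in\partial G}(|z-x|+|z-y|)\le |x-y|/(2d)$.

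Next I would record this bound in a form adapted to the $j$‑metric. Multiplying by $2$ gives $2\,s_G(x,y)\le |x-y|/d$, and since $\log$ is increasing this yields the key inequality $\log\bigl(1+2\,s_G(x,y)\bigr)\le \log\bigl(1+|x-y|/d\bigr)=j_G(x,y)$, which is valid in every domain $G\subsetneq\Rn$. Thus it only remains to compare $\log(1+2s)$ with $s$ for the single value $s=s_G(x,y)\in(0,1]$, the membership in $(0,1]$ coming from the range stated in \eqref{definiton} together with $x\neq y$.

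Finally I would apply Lemma~\ref{le61}(1) with $a=2$: the function $r\mapsto \log(1+2r)/r$ is strictly decreasing on $(0,\infty)$, so on $(0,1]$ it is bounded below by its value at $r=1$, namely $\log 3$. Hence $\log\bigl(1+2\,s_G(x,y)\bigr)\ge s_G(x,y)\log 3$, and combining this with the key inequality gives $j_G(x,y)\ge s_G(x,y)\log 3\ge s_G(x,y)\log 2$, which is the assertion. I do not expect any genuine obstacle: the only point needing care is the identity $s_G(x,y)=|x-y|/\inf_z(\,\cdot\,)$ together with the termwise distance bounds, while the monotonicity input is exactly Lemma~\ref{le61}(1). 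I would also note that this argument in fact delivers the sharper constant $1/\log 3$ rather than $1/\log 2$, in line with the subsequent remark identifying $1/\log 3$ as the correct value.
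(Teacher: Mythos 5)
Your proof is correct, and it takes a genuinely different route from the paper's. The paper argues by a case split on whether $|x-y|>d_G(x)$: in the first case it uses only $s_G\le 1$ together with $j_G>\log 2$, and in the second case it uses $t/2\le\log(1+t)$ for $t\in(0,1]$ to get $s_G\le j_G$; the constant $1/\log 2$ comes entirely from the first case. You avoid the case split by interpolating the quantity $\log\bigl(1+2s_G(x,y)\bigr)$ between the two metrics: the termwise bound $|z-x|+|z-y|\ge 2d$ gives $\log(1+2s_G)\le j_G$, while Lemma~\ref{le61}(1) with $a=2$ and $s_G\le 1$ gives $\log(1+2s_G)\ge s_G\log 3$. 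This not only proves the stated inequality but yields the sharper constant $1/\log 3$, which the paper itself notes is the correct one and attributes to the later work \cite{CHKV}; your argument is essentially a self-contained proof of that stronger statement, attained with equality for antipodal points in $\Rn\setminus\{0\}$. The only point worth stating explicitly is that the supremum over $z$ of a ratio with $z$-independent numerator equals the ratio with the infimum of the denominator, which you do.
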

\begin{proof}
For given $x,y\in G$, we may assume that $d_G(x)\leq d_G(y)$. Then if $|x-y|> d_G(x)$, obviously we have $$s_G(x,y)\leq 1\leq \frac{1}{\log2}j_G(x,y).$$
If  $|x-y|\leq d_G(x)$, then $$s_G(x,y)\leq \frac{|x-y|}{2d_G(x)}\leq \log(1+\frac{|x-y|}{d_G(x)})=j_G(x,y),$$
because by a simple computation $\frac{t}{2} \le \log (1+t)$ for $t \in (0,1]$.

\end{proof}

Let us next consider convexity of metric balls.

\begin{lemma}
\label{ekalemma}
For all $\alpha\in(0,\arccos(1-2r^2))$ and $r\in(0,1/2]$ we have
\begin{equation*}
1+\cos\alpha-2\sqrt{2}\cos(\alpha/2)\sqrt{\cos\alpha+2r^2-1}\geq0.
\end{equation*}
\end{lemma}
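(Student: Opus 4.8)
The plan is to reduce the whole inequality to an elementary trigonometric statement via the half-angle substitution $t=\alpha/2$. Writing $\cos\alpha=2\cos^2 t-1$, I would first observe that
\[
1+\cos\alpha=2\cos^2 t,\qquad \cos\alpha+2r^2-1=2(\cos^2 t+r^2-1)=2(r^2-\sin^2 t),
\]
so that $\sqrt{\cos\alpha+2r^2-1}=\sqrt{2}\,\sqrt{r^2-\sin^2 t}$. Substituting these into the left-hand side collapses the two $\sqrt{2}$ factors, and the expression factors as
\[
1+\cos\alpha-2\sqrt{2}\cos(\alpha/2)\sqrt{\cos\alpha+2r^2-1}
=2\cos t\bigl(\cos t-2\sqrt{r^2-\sin^2 t}\,\bigr).
\]

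Next I would pin down the signs on the stated domain. The upper bound $\alpha<\arccos(1-2r^2)$ is precisely the condition $\cos\alpha\ge 1-2r^2$, which under the substitution reads $\sin^2 t\le r^2$; this is exactly what makes the radicand $r^2-\sin^2 t$ nonnegative, so the square root is well defined. Moreover $\arccos(1-2r^2)\le\pi/3$ for $r\le1/2$, hence $t=\alpha/2\in(0,\pi/6)$ and $\cos t>0$. Therefore the sign of the whole expression is governed by the factor $\cos t-2\sqrt{r^2-\sin^2 t}$, and it suffices to prove $\cos t\ge 2\sqrt{r^2-\sin^2 t}$.

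Since both sides of this last inequality are nonnegative, I may square it, which reduces the claim to
\[
\cos^2 t\ge 4(r^2-\sin^2 t)\quad\Longleftrightarrow\quad 1+3\sin^2 t\ge 4r^2.
\]
This is where the hypothesis $r\le1/2$ enters decisively: it gives $4r^2\le1\le 1+3\sin^2 t$, so the inequality holds with room to spare. Assembling the factored form with the signs established above then yields the lemma.

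In truth there is no serious obstacle once the half-angle substitution is made; the only points requiring care are bookkeeping rather than difficulty. I would watch that the radicand's nonnegativity is correctly traced back to the endpoint $\arccos(1-2r^2)$, that $\cos t$ is genuinely positive (so that factoring it out preserves the inequality direction), and that both sides are nonnegative before squaring. The role of the constraint $r\le1/2$ is precisely to force $4r^2\le1$ in the final step, and I expect the argument to fail without it.
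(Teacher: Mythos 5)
Your proof is correct, and it takes a genuinely different route from the paper. You use the half-angle substitution $t=\alpha/2$ to factor the left-hand side as $2\cos t\bigl(\cos t-2\sqrt{r^2-\sin^2 t}\bigr)$, and then reduce the positivity of the second factor, by squaring, to the elementary inequality $1+3\sin^2 t\ge 4r^2$, which is immediate from $r\le 1/2$. The paper instead sets $g(\alpha)=\cos\alpha-2\sqrt{2}\cos(\alpha/2)\sqrt{\cos\alpha+2r^2-1}$ and runs a calculus argument: it shows $g'>0$ by proving an auxiliary quotient $h(\alpha)$ is increasing with $h(0)=r+1/r\ge 5/2$, and then concludes $g(\alpha)\ge g(0)\ge -1$ (the paper's displayed value of $g(0)$ has a sign slip --- it should be $1-4r$, not $4r-1$ --- though the conclusion $g(0)\ge-1$ for $r\le 1/2$ still stands). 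Your argument is shorter, avoids differentiation entirely, and in fact shows the inequality is strict for $\alpha>0$; it also makes transparent exactly where each hypothesis is used: the upper endpoint $\arccos(1-2r^2)$ guarantees $r^2-\sin^2 t>0$ and $\cos t>0$, while $r\le 1/2$ gives $4r^2\le 1$ in the final squared inequality. The only cosmetic point is that on the open interval the condition is the strict inequality $\cos\alpha>1-2r^2$, so the radicand is strictly positive; this only helps you.
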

\begin{proof}
Let us write the above inequality in the following form
\begin{equation*}
g(\alpha)=\cos\alpha-2\sqrt{2}\cos(\alpha/2)\sqrt{\cos\alpha+2r^2-1}\geq-1.
\end{equation*}
First we need to show that $g(\alpha)$ is strictly increasing. For this we need
\begin{equation}
g'(\alpha)=\sin\alpha\left(\frac{\sqrt{2}(r^2+\cos\alpha)}{\cos(\alpha/2)\sqrt{\cos\alpha+2r^2-1}}-1\right).
\end{equation}
In order to show that $g'(\alpha)>0$ we need to prove the following inequality
\begin{equation*}
h(\alpha)=\frac{\sqrt{2}(r^2+\cos\alpha)}{\cos(\alpha/2)\sqrt{\cos\alpha+2r^2-1}}>1.
\end{equation*}
Because
\begin{equation*}
h'(\alpha)=\frac{\sqrt{2}\tan(\alpha/2)(r^2-1)^2}{\cos(\alpha/2)(\cos\alpha+2r^2-1)^{3/2}}
\end{equation*}
it is easy to see that $h'(\alpha)>0$ for all $\alpha\in(0,\arccos(1-2r^2))$ and $r\in(0,1/2]$. Since $h(\alpha)$ is strictly increasing we see that $h(0)<h(\alpha)$ and with a simple computation we get
\begin{equation*}
h(\alpha)>h(0)=r+1/r\geq\frac{5}{2}>1.
\end{equation*}
Now we have shown that $g(\alpha)$ is strictly increasing. Therefore $g(\alpha)\geq g(0)=4r-1>-1$
and the assertion follows.

\end{proof}

%===================================================================================

\begin{theorem}
\label{nonconvextheorem}
Let $G = \mathbb{R}^n \setminus \{ z \}$, $z\in\mathbb{R}^n$, $x \in G$ and $r \in (0,1]$. The metric ball $B_s (x,r)$ is nonconvex if $r > 1/2$.
\end{theorem}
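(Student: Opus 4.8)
The plan is to reduce to the planar case and then, for every $r>1/2$, exhibit two points of $B_s(x,r)$ whose midpoint lies on $\partial B_s(x,r)$. First, since $s$ is translation invariant I may assume $z=0$, so that $s_G(x,y)=|x-y|/(|x|+|y|)$. Moreover, for $n\ge 2$ the intersection of $B_s(x,r)$ with the plane $\Pi$ through $0$ and $x$ equals the two-dimensional metric ball (the quotient $|x-y|/(|x|+|y|)$ depends only on mutual Euclidean distances), and a planar section of a convex set is convex; hence it suffices to prove nonconvexity when $n=2$. I would write $x=(a,0)$ with $a=|x|>0$, set $\rho=\sqrt{u^2+v^2}$ and
\[
  F(u,v)=(u-a)^2+v^2-r^2\,(a+\rho)^2 ,
\]
so that $B_s(x,r)=\{(u,v)\neq 0 : F(u,v)<0\}$.

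The key point is the behaviour of $\partial B_s(x,r)$ at the near point $N=\bigl(a\tfrac{1-r}{1+r},0\bigr)$, which lies on the segment $[0,x]$ and satisfies $s_G(x,N)=r$, i.e. $F(N)=0$. I would show that the boundary is locally concave toward the interior at $N$ exactly when $r>1/2$. Since $F_u(N)=-4ar\neq 0$, the implicit function theorem presents $\partial B_s(x,r)$ near $N$ as a graph $u=u(v)$ with $u(0)=a\tfrac{1-r}{1+r}$; by the symmetry $v\mapsto -v$ one has $u'(0)=0$, and differentiating $F(u(v),v)=0$ twice gives $u''(0)=-F_{vv}(N)/F_u(N)$. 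A direct computation then yields
\[
  F_{vv}(N)=\frac{2-2r-4r^2}{1-r},\qquad
  u''(0)=\frac{2-2r-4r^2}{4ar(1-r)}=-\frac{(2r-1)(r+1)}{2ar(1-r)} ,
\]
so $u''(0)<0$ precisely for $r>1/2$. This is consistent with the fact that the expression in Lemma \ref{ekalemma}, evaluated at $\alpha=0$, equals $2-4r$, which changes sign at $r=1/2$.

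To conclude, I note that on the axis $s_G(x,(u,0))=(a-u)/(a+u)$ is strictly decreasing for $0<u<a$, so points just to the right of $N$ lie in $B_s(x,r)$; hence near $N$ the ball is the region $\{u>u(v)\}$. Because $u''(0)<0$ we have $u(v)<u(0)$ for all small $v\neq 0$, so for a suitable $v_0>0$ the two points $P_\pm=\bigl(a\tfrac{1-r}{1+r},\pm v_0\bigr)$ satisfy $F(P_\pm)<0$ and therefore belong to $B_s(x,r)$, while their midpoint is exactly $N\in\partial B_s(x,r)$, which is not in the open ball. This gives nonconvexity for $1/2<r<1$; the remaining case $r=1$ is immediate, since $B_s(x,1)$ is $\R^n$ with a closed ray removed.

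The main obstacle is the sign bookkeeping: one must correctly identify which side of the boundary is the interior near $N$ (equivalently, the sign of $F_u(N)$) in order to read off from $u''(0)<0$ that the ball itself, and not its complement, fails to be convex there. The algebra producing $F_{vv}(N)$ together with the factorization $2-2r-4r^2=-2(2r-1)(r+1)$ is the only genuine computation, and the threshold $r=1/2$ drops out of it directly.
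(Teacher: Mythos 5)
Your proof is correct, and it rests on the same underlying mechanism as the paper's: both arguments detect the failure of convexity at the near boundary point on the segment $[z,x]$ and find that the relevant second-order quantity changes sign exactly at $r=1/2$. The difference is in execution. The paper parametrizes $\partial B_s(x,r)$ in polar coordinates about the puncture, solves a quadratic for $t(\alpha)$, and studies the tangent slope $m(\alpha)$ via Fact \ref{slopedef}, concluding from $f(0)=0$ and $f'(0)=1-2r<0$ that the slope is negative for small $\alpha>0$; it then invokes the mirror symmetry in the $x_1$-axis and leaves the final ``two points whose midpoint escapes the ball'' step largely implicit. You instead apply the implicit function theorem to the defining function $F$ and compute $u''(0)=-F_{vv}(N)/F_u(N)$ directly; I checked that $F_u(N)=-4ar$ and $F_{vv}(N)=(2-2r-4r^2)/(1-r)$ are correct, and the factorization $2-2r-4r^2=-2(2r-1)(r+1)$ gives the threshold cleanly. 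Your route avoids the square roots produced by the quadratic formula, makes the convexity violation completely explicit through the points $P_\pm$ and their midpoint $N\in\partial B_s(x,r)$, and correctly identifies the interior side via the sign of $F_u(N)$. It also handles the endpoint $r=1$ separately (as $\mathbb{R}^n$ minus a closed ray), which the paper's parametrization, with $1-r^2$ in the denominator, does not literally cover; and your reduction to $n=2$ by intersecting with the plane through $0$ and $x$ is a sound replacement for the paper's appeal to symmetry.
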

\begin{proof}
By symmetry it is sufficient to consider only the case $n=2$, $z=0$ and $x=2$. Let $r\in(0,1]$, $y\in\partial B_s(x,r)$ and denote $t=|y|$. Now $s_G(x,y)=r$ is equivalent to
\begin{equation*}
\left|2-y\right|=(2+t)r.
\end{equation*}
By the law of cosines we obtain $\left|2-y\right|^2=t^2+4-4t\cos\alpha$ which is equivalent to
\begin{equation*}
\left|2-y\right|=\sqrt{t^2+4-4t\cos\alpha}.
\end{equation*}

\begin{figure}[!ht]
\begin{center}
\label{kuva1}
\includegraphics[width=10cm]{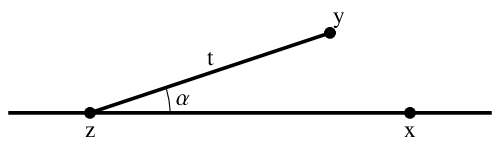}
\caption{Parametrization of $\partial B_s(x,r)$ in the proof of Theorem \ref{nonconvextheorem}.}
\end{center}
\end{figure}

Therefore we get
\begin{equation*}
\sqrt{t^2+4-4t\cos\alpha}=(2+t)r
\end{equation*}
and by solving for $t$ we obtain
\begin{equation}
\label{bothparts}
t^*(\alpha)=\frac{2\left(r^2+\cos\alpha\pm\sqrt{(1+\cos\alpha)(\cos\alpha+2r^2-1)}\right)}{1-r^2}.
\end{equation}
In this proof we select
\begin{equation*}
t(\alpha)=\frac{2\left(r^2+\cos\alpha-\sqrt{(1+\cos\alpha)(\cos\alpha+2r^2-1)}\right)}{1-r^2}.
\end{equation*}
Next we solve the following inequality to check, which values of $\alpha$ are interesting
\begin{equation*}
(1+\cos\alpha)(\cos\alpha+2r^2-1)>0.
\end{equation*}
With a simple substitution $a=\cos\alpha$ we get
\begin{equation*}
a^2+2r^2t+2r^2-1>0 \Leftrightarrow a>1-2r^2 \Leftrightarrow \alpha<\arccos(1-2r^2).
\end{equation*}
It is clear that if $r>1/2$, then $\arccos(1-2r^2)>\pi/3$. Therefore it is enough to focus on angles $\alpha\in(0,\pi/3)$.
The slope of the tangent of $\partial B_s(x,r)$ with respect to $\alpha$ according to the Definition \ref{slopedef} is
\begin{align*}
m(\alpha)&=\frac{t(\alpha)+\tan\alpha \ t'(\alpha)}{-t(\alpha)\tan\alpha +t'(\alpha)} \\ \\
&=\frac{\sqrt{(1+\cos\alpha)(\cos\alpha+2r^2-1)}+\sin\alpha\tan\alpha}{\sin\alpha-\sqrt{(1+\cos\alpha)(\cos\alpha+2r^2-1)}\tan\alpha},
\end{align*}
where
\begin{equation*}
t'(\alpha)=\frac{-2\left(r^2+\cos\alpha-\sqrt{(1+\cos\alpha)(\cos\alpha+2r^2-1)}\right)\sin\alpha}{(r^2-1)\sqrt{(1+\cos\alpha)(\cos\alpha+2r^2-1)}}.
\end{equation*}

Since $B_s(2,r)$ is symmetric with respect to $x_1$-axis, we need to show that $m(\alpha)<0$ for some $\alpha\in(0,\pi/3)$. It is clear that for all $\alpha\in(0,\pi/3)$ the inequality
\begin{equation*}
\sqrt{(1+\cos\alpha)(\cos\alpha+2r^2-1)}+\sin\alpha\tan\alpha>0
\end{equation*}
holds. Now we need to show that for some $\alpha\in(0,\pi/3)$ the inequality
\begin{equation*}
f(\alpha)=\sin\alpha-\sqrt{(1+\cos\alpha)(\cos\alpha+2r^2-1)}\tan\alpha<0
\end{equation*}
holds. By a simple computation we see that $f(0)=0$ and $f'(0)=1-2r<0$ for all $r>\frac{1}{2}$. Therefore $f(\alpha)<0$ for some sufficiently small $\alpha$. Now we have shown that $m(\alpha)<0$ for some $\alpha\in(0,\pi/3)$ and the assertion follows.
\end{proof}

%=================================================================================================

\begin{theorem}\label{punctspace}
Let $G = \mathbb{R}^n \setminus \{ z \}$, $z\in\mathbb{R}^n$, $x \in G$ and $r > 0$. The metric ball $B_s (x,r)$ is (strictly) convex if $r \leq 1/2$ $(r < 1/2)$.
\end{theorem}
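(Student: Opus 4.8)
The plan is to reduce everything to a single planar computation and then analyze the boundary curve with the parametrization already set up in Theorem \ref{nonconvextheorem}, letting Lemma \ref{ekalemma} supply the decisive inequality.

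First I would record the symmetry reductions. Translation by $-z$ is a Euclidean isometry carrying $G$ onto $\mathbb{R}^n\setminus\{0\}$, and on $\mathbb{R}^n\setminus\{0\}$ we have $s_G(x,y)=|x-y|/(|x|+|y|)$, which is invariant under every rotation about $0$ and every scaling. Hence I may assume $z=0$ and $x=2e_1$. Since $s_G(x,y)$ then depends only on $|y|$ and the angle $\measuredangle(x,0,y)$, the ball $B_s(x,r)$ is invariant under all rotations fixing the $x_1$-axis, so it is a solid of revolution. A solid of revolution about the $x_1$-axis is convex exactly when its planar meridian section is convex, and strictly convex exactly when that section is strictly convex (equivalently, when the function describing the section's upper boundary is concave). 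Thus it suffices to treat $n=2$, $z=0$, $x=2$.

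Next I would use the parametrization $t=t^\pm(\alpha)$ of \eqref{bothparts}. The boundary $\partial B_s(x,r)$ consists of the inner arc $t^-(\alpha)$ and the outer arc $t^+(\alpha)$ for $\alpha\in[0,\theta_0]$, $\theta_0=\arccos(1-2r^2)$, together with their reflections in the $x_1$-axis; for $r\le 1/2$ one has $\theta_0\le\pi/3$. The two arcs meet at $t=2$ when $\alpha=\theta_0$, and since the square root in \eqref{bothparts} vanishes there, the slope $m(\alpha)$ of Theorem \ref{nonconvextheorem} tends to the common finite value $\tan\theta_0$ on both arcs; so the boundary is $C^1$ at the junction and no reflex corner can occur. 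Writing the planar boundary in polar form $(t(\alpha)\cos\alpha,t(\alpha)\sin\alpha)$, convexity of $B_s(x,r)$ is equivalent to the two signed-curvature conditions $t^2+2(t')^2-tt''\ge 0$ on the outer arc and $t^2+2(t')^2-tt''\le 0$ on the inner arc, the sign differing because the interior of the ball lies on the far side of the inner arc but the near (origin) side of the outer arc. I would substitute $t=t^-(\alpha)$, differentiate twice, and simplify; using $1+\cos\alpha=2\cos^2(\alpha/2)$ the required inequality collapses to precisely the statement of Lemma \ref{ekalemma}. As a check on the reduction, at $\alpha=0$ a direct computation gives $t^-(0)=\tfrac{2(1-r)}{1+r}$, $t^-{}'(0)=0$, $t^-{}''(0)=\tfrac{1-r}{r(1+r)}$, whence $t^2+2(t')^2-tt''$ equals $\tfrac{2(1-r)^2}{(1+r)^2}\cdot\tfrac{2r-1}{r}$, which is $\le 0$ exactly when $r\le 1/2$ and vanishes exactly at $r=1/2$; this matches the degeneration of Lemma \ref{ekalemma} to equality at $\alpha=0$, $r=1/2$.

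Finally, the strict/non-strict dichotomy falls out of the strictness in Lemma \ref{ekalemma}: for $r<1/2$ the inequality is strict throughout, so the curvature is strictly of the required sign and $B_s(x,r)$ is strictly convex, whereas for $r=1/2$ the curvature vanishes only at the single leftmost boundary point $(\tfrac{2}{3},0)$, giving convexity but not strict convexity. I expect the main obstacle to be the curvature computation on the inner arc — carrying out the double differentiation of $t^-(\alpha)$ and reducing $t^2+2(t')^2-tt''\le 0$ to Lemma \ref{ekalemma} is substantial though routine, and one must track the orientation convention carefully so that the correct inequality is assigned to each arc. The outer arc is easier, its curvature having the favorable sign throughout, and the $C^1$ junction computed above then glues the two arcs into one convex closed curve.
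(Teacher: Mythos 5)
Your proposal is correct and follows essentially the same route as the paper: the same symmetry reduction to $n=2$, $z=0$, $x=2$, the same parametrization $t^{\pm}(\alpha)$ from \eqref{bothparts}, and the same decisive inequality, Lemma \ref{ekalemma}. The only cosmetic difference is that you test convexity via the sign of the polar curvature expression $t^2+2(t')^2-t\,t''$ on each arc, whereas the paper tests monotonicity of the tangent slope $m(\alpha)$ from Fact \ref{slopedef}; since $\operatorname{sign}(m'(\alpha))=\operatorname{sign}\bigl(t^2+2(t')^2-t\,t''\bigr)$ away from vertical tangents, the two criteria lead to the same computation, and your explicit handling of the solid-of-revolution reduction and of the $C^1$ junction of the two arcs merely supplies detail the paper leaves implicit.
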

\begin{proof}
By symmetry it is sufficient to consider only the case $n=2$, $z=0$ and $x=2$. Let us use the following notation obtained in (\ref{bothparts})
\begin{align*}
t_1(\alpha)&=\frac{2\left(r^2+\cos\alpha-\sqrt{(1+\cos\alpha)(\cos\alpha+2r^2-1)}\right)}{1-r^2} \\
t_2(\alpha)&=\frac{2\left(r^2+\cos\alpha+\sqrt{(1+\cos\alpha)(\cos\alpha+2r^2-1)}\right)}{1-r^2}
\end{align*}
In the proof of Theorem \ref{nonconvextheorem} we showed that for $t_1(\alpha)$ the slope of the tangent of $\partial B_s(x,r)$ is
\begin{equation*}
m_1(\alpha)=\frac{\sqrt{(1+\cos\alpha)(\cos\alpha+2r^2-1)}+\sin\alpha\tan\alpha}{\sin\alpha-\sqrt{(1+\cos\alpha)(\cos\alpha+2r^2-1)}\tan\alpha}
\end{equation*}
and $m_1(0)<0$ if $r>1/2$. With this it is easy to see that
\begin{align*}
m_1(0)&>0 \ \ \text{if} \ \ r<1/2 ,\\
m_1(0)&\rightarrow\infty \ \ \text{if} \ \ r\rightarrow1/2.
\end{align*}
Similarly, for $t_2(\alpha)$ we write
\begin{equation*}
t_2'(\alpha)=\frac{2\left(r^2+\cos\alpha+\sqrt{(1+\cos\alpha)(\cos\alpha+2r^2-1)}\right)\sin\alpha}{(r^2-1)\sqrt{(1+\cos\alpha)(\cos\alpha+2r^2-1)}}
\end{equation*}
and
\begin{align*}
m_2(\alpha)&=\frac{t_2(\alpha)+\tan\alpha \ t'_2(\alpha)}{-t_2(\alpha)\tan\alpha +t'_2(\alpha)} \\ \\
&=\frac{-\sqrt{(1+\cos\alpha)(\cos\alpha+2r^2-1)}+\sin\alpha\tan\alpha}{\sin\alpha+\sqrt{(1+\cos\alpha)(\cos\alpha+2r^2-1)}\tan\alpha},
\end{align*}
where $m_2(\alpha)\rightarrow-\infty$ if $\alpha\rightarrow0$. To prove the theorem we need to show that $m_1'(\alpha)\leq0$ and $m_2'(\alpha)\geq0$ for all $\alpha\in(0,\arccos(1-2r^2))$ where $r\in(0,1/2]$. Firstly, we get
\begin{equation*}
m_1'(\alpha)=\frac{-2r^2\cos^2(\alpha/2)\frac{1}{\cos^2\alpha}\left(\varphi(\alpha)-2\sqrt{2}\sqrt{\cos^2(\alpha/2)\omega(\alpha)}\right)}{\sqrt{\varphi(\alpha)\omega(\alpha)}\left(\sin\alpha-\tan\alpha\sqrt{\varphi(\alpha)\omega(\alpha)}\right)^2},
\end{equation*}
where $\varphi(\alpha)=1+\cos\alpha$ and $\omega(\alpha)=\cos\alpha+2r^2-1$. Because
\begin{equation*}
\varphi(\alpha)\omega(\alpha)>0 \Leftrightarrow \alpha < \arccos(1-2r^2),
\end{equation*}
it is easy to see that for all $\alpha\in(0,\arccos(1-2r^2))$ and $r\in(0,1/2]$ the following inequality
\begin{equation*}
\sqrt{\varphi(\alpha)\omega(\alpha)}\left(\sin\alpha-\tan\alpha\sqrt{\varphi(\alpha)\omega(\alpha)}\right)^2>0
\end{equation*}
holds.
To prove that $m_1'(\alpha)\leq0$ we need to show that the inequality
\begin{equation*}
-2r^2\cos^2(\alpha/2)\frac{1}{\cos^2\alpha}\left(\varphi(\alpha)-2\sqrt{2}\sqrt{\cos^2(\alpha/2)\omega(\alpha)}\right)\leq0
\end{equation*}
holds. It is easy to see that
\begin{equation*}
-2r^2\cos^2(\alpha/2)\frac{1}{\cos^2\alpha}<0
\end{equation*}
for all $\alpha\in(0,\arccos(1-2r^2))$ and $r\in(0,1/2]$. In Lemma \ref{ekalemma} we showed that
\begin{equation*}
g(\alpha)=1+\cos\alpha-2\sqrt{2}\sqrt{\cos^2(\alpha/2)(\cos\alpha+2r^2-1)}\geq0.
\end{equation*}
Therefore $m_1'(\alpha)\leq0$. For $m_2(\alpha)$ we get
\begin{equation*}
m_2'(\alpha)=\frac{2r^2\cos^2(\alpha/2)\frac{1}{\cos^2\alpha}\left(\varphi(\alpha)+\sqrt{2}\sqrt{\cos(2\alpha)+4r^2\cos\alpha+4r^2-1}\right)}{\sqrt{\varphi(\alpha)\omega(\alpha)}\left(\sin\alpha+\tan\alpha\sqrt{\varphi(\alpha)\omega(\alpha)}\right)^2}.
\end{equation*}
Because
\begin{equation*}
\cos(2\alpha)+4r^2\cos\alpha+4r^2-1\geq0\Leftrightarrow\alpha<\arccos(1-2r^2),
\end{equation*}
we clearly see that $m_2'(\alpha)>0$ for all $\alpha\in(0,\arccos(1-2r^2))$ and the assertion follows.
\end{proof}

\begin{figure}[!ht]
\begin{center}
\label{kuva3}
\includegraphics[width=12cm]{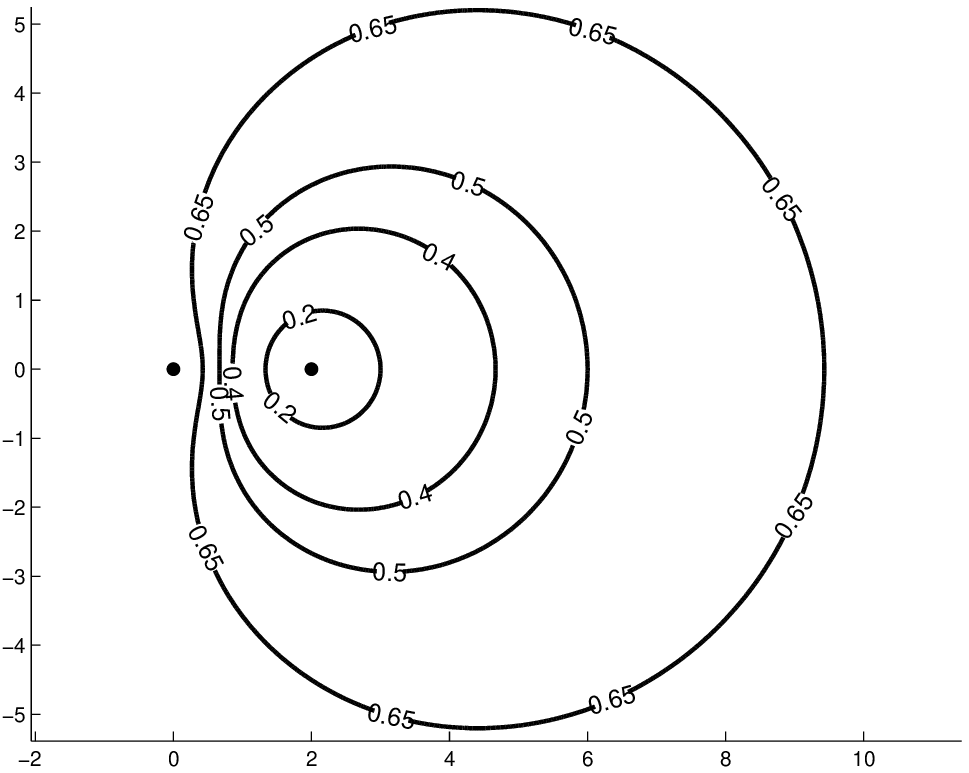}
\caption{Contours of the metric balls $B_s(x,r)$ in $\mathbb{R}^2 \setminus \{ 0 \}$ where $x=2$ and $r=0.2, 0.4, 0.5$ and $0.65$.}
\end{center}
\end{figure}

%==================================================================================================================
%==================================================================================================================
%==================================================================================================================
%==================================================================================================================

\section{Balls in half-space}
%==================================================================================================================
%==================================================================================================================
%==================================================================================================================

We consider the triangular ratio metric in half-space $\mathbb{H}^n$. We compare first $s$ with $\rho$ and then prove that the metric balls $B_s(x,r)$ are also Euclidean and thus always convex.

By the definition we obtain that for $x,y \in G = \mathbb{H}^n$
\begin{equation}\label{formulaHS}
  s_{G}(x,y) = \frac{|x-y|}{|x-y'|},
\end{equation}
where $y' = (y_1, \dots ,y_{n-1},-y_n) \notin G$. By Figure \ref{angmetfig9} it is clear that the supremum in \eqref{definiton} is attained at the point $z$.

\begin{figure}[!ht]
  \includegraphics[width=7cm]{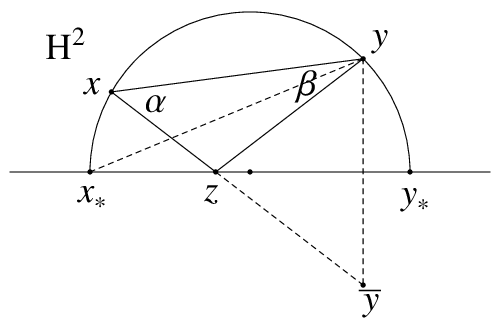}
  \caption{Formula for $s$ in $\Hn$ as in \eqref{formulaHS}.\label{angmetfig9}  }
\end{figure}

\begin{proposition}\label{Beardonprop}
  Then equality $s_{\Hn}(x,y) = \textnormal{tanh}\, \frac{\rho_{\Hn}(x,y)}{2}$ holds for all $x,y \in \Hn$.
\end{proposition}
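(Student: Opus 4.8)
The plan is to reduce the claimed identity to the classical formula for the hyperbolic distance in the upper half-space, combined with the explicit expression \eqref{formulaHS} for $s$. Recall (see \cite{Beardon83} or \cite[pp.~19--32]{Vu2}) that for $x,y\in\Hn$ the hyperbolic distance satisfies
\begin{equation*}
\sinh^2\frac{\rho_{\Hn}(x,y)}{2}=\frac{|x-y|^2}{4x_ny_n}.
\end{equation*}
I would take this as the starting point, since its shape is precisely what is needed to match the square of \eqref{formulaHS}.

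First I would compute the denominator $|x-y'|$ of \eqref{formulaHS}. Since $y'=(y_1,\dots,y_{n-1},-y_n)$ differs from $y$ only in the sign of the last coordinate, expanding the square and using $(x_n+y_n)^2-(x_n-y_n)^2=4x_ny_n$ gives
\begin{equation*}
|x-y'|^2=|x-y|^2+4x_ny_n.
\end{equation*}
This is the one concrete geometric computation in the argument, and it is routine.

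Next I would substitute. Writing $s=s_{\Hn}(x,y)$ and $\rho=\rho_{\Hn}(x,y)$ for brevity, formula \eqref{formulaHS} together with the previous display yields
\begin{equation*}
s^2=\frac{|x-y|^2}{|x-y|^2+4x_ny_n}=\frac{\sinh^2(\rho/2)}{\sinh^2(\rho/2)+1},
\end{equation*}
where the second equality follows by dividing numerator and denominator by $4x_ny_n$ and inserting the half-space formula. Using $\cosh^2(\rho/2)=\sinh^2(\rho/2)+1$, the right-hand side equals $\tanh^2(\rho/2)$, and taking positive square roots (all quantities are nonnegative and $\rho\ge0$) gives the assertion.

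The argument has no genuine obstacle: the only external input is the half-space distance formula, which is classical. The sole point requiring care is the reflection bookkeeping in the computation of $|x-y'|^2$, since a sign error there would propagate; once that display is verified, the conclusion is a one-line hyperbolic identity.
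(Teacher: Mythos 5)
Your proof is correct and follows essentially the same route as the paper: both arguments combine the explicit formula \eqref{formulaHS} for $s_{\Hn}$ with a classical expression for the hyperbolic distance in $\Hn$. The only difference is that the paper directly cites Beardon's identity $\tanh\frac{\rho_{\Hn}(x,y)}{2}=|x-y|/|x-y'|$, whereas you re-derive it from the $\sinh^2$ formula via the computation $|x-y'|^2=|x-y|^2+4x_ny_n$; your details check out.
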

\begin{proof}
  The assertion follows from \eqref{formulaHS} and \cite[7.2.1 (v)]{Beardon83}.
\end{proof}
\begin{proposition}\label{Beardonprop1}
 Balls in the triangular ratio metric are Euclidean balls.
\end{proposition}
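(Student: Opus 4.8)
The plan is to start from the explicit formula \eqref{formulaHS}, namely $s_{\Hn}(x,y)=|x-y|/|x-y'|$ with $y'=(y_1,\dots,y_{n-1},-y_n)$, and to rewrite the defining condition $s_{\Hn}(x,y)<r$ until it is manifestly the inequality of a Euclidean ball. First I would record the elementary identity $|x-y'|^2=|x-y|^2+4x_ny_n$, valid because $x,y\in\Hn$ and $y'$ is the reflection of $y$ in $\partial\Hn$, so that $(x_n-y'_n)^2-(x_n-y_n)^2=(x_n+y_n)^2-(x_n-y_n)^2=4x_ny_n$. Squaring $|x-y|<r\,|x-y'|$ (both quantities are positive and $1-r^2>0$ since $r\in(0,1)$) and substituting this identity turns the condition into the quadratic inequality $(1-r^2)|x-y|^2<4r^2x_ny_n$ in the coordinates of $y$.

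Second, expanding $|x-y|^2=\sum_{i<n}(x_i-y_i)^2+(x_n-y_n)^2$ and collecting the terms involving $x_ny_n$ via $1+\tfrac{2r^2}{1-r^2}=\tfrac{1+r^2}{1-r^2}$, the inequality becomes
$$\sum_{i<n}(x_i-y_i)^2+y_n^2-2x_ny_n\frac{1+r^2}{1-r^2}+x_n^2<0.$$
The terms with $i<n$ are already perfect squares, and completing the square in $y_n$ produces the center coordinate $x_n\frac{1+r^2}{1-r^2}$ and leaves the constant $x_n^2\big((\frac{1+r^2}{1-r^2})^2-1\big)$. Using $(1+r^2)^2-(1-r^2)^2=4r^2$ this constant equals $(\frac{2x_nr}{1-r^2})^2$, so the condition reads $|y-c|^2<R^2$ with $R=\frac{2x_nr}{1-r^2}$ and $c=x-e_nx_n\big(1-\frac{1+r^2}{1-r^2}\big)$, since $x_n\frac{1+r^2}{1-r^2}=x_n-x_n\big(1-\frac{1+r^2}{1-r^2}\big)$. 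This is precisely the Euclidean ball in part (2) of Theorem \ref{mainthm1}, and being a Euclidean ball it is strictly convex.

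The computation is routine, so there is no serious obstacle; the only points needing care are verifying that the $n$-th coordinate of the center agrees with the form $x_n-x_n\big(1-\frac{1+r^2}{1-r^2}\big)$ and checking that the ball lies in $\Hn$ (its center has positive last coordinate $x_n\frac{1+r^2}{1-r^2}>R$). Alternatively one can bypass the algebra entirely: by Proposition \ref{Beardonprop} we have $s_{\Hn}=\tanh(\rho_{\Hn}/2)$, and since $\tanh$ is a strictly increasing homeomorphism of $[0,\infty)$ onto $[0,1)$, the ball $B_s(x,r)$ coincides with the hyperbolic ball $B_{\rho_{\Hn}}(x,2\arth r)$; the classical fact that hyperbolic balls in $\Hn$ are Euclidean balls then finishes the proof at once. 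I would lead with the direct computation, since it additionally recovers the explicit center and radius recorded in Theorem \ref{mainthm1}(2).
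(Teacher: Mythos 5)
Your proof is correct, but it leads with a genuinely different route from the paper's. The paper proves Proposition \ref{Beardonprop1} in one line: by Proposition \ref{Beardonprop}, $s_{\Hn}=\tanh(\rho_{\Hn}/2)$, so $s$-balls coincide with hyperbolic balls $B_\rho(x,2\arth r)$, and these are Euclidean balls by \cite[(2.11)]{Vu2} --- this is precisely the ``alternative'' you sketch in your closing paragraph. The explicit center and radius are then obtained separately in Theorem \ref{halfplanelemma}, where the authors intersect $\partial B_s(x,r)$ with the vertical line through $x$ to find the two antipodal points $q$ and $w$, and use the already-established roundness to identify the ball as the Euclidean ball with diameter $[q,w]$. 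Your direct computation --- squaring $|x-y|<r\,|x-y'|$, using $|x-y'|^2=|x-y|^2+4x_ny_n$, and completing the square in $y_n$ --- is sound; I checked the algebra, including the identities $1+\tfrac{2r^2}{1-r^2}=\tfrac{1+r^2}{1-r^2}$ and $(1+r^2)^2-(1-r^2)^2=4r^2$, and the verification that the center's last coordinate $x_n\tfrac{1+r^2}{1-r^2}$ exceeds the radius $\tfrac{2x_nr}{1-r^2}$ (equivalent to $(1-r)^2>0$), so the ball does lie in $\Hn$. Its advantage is that it proves the Proposition and Theorem \ref{halfplanelemma} simultaneously, without appealing to the hyperbolic metric at all; the only external input is formula \eqref{formulaHS}, which the paper justifies geometrically beforehand. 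Either route is acceptable: yours is more self-contained and quantitative, the paper's is shorter because it delegates the roundness to a classical fact.
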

\begin{proof} By \cite[(2.11)]{Vu2} hyperbolic balls in $\Hn$ are Euclidean balls and by Proposition \ref{Beardonprop} also balls in the triangular ratio metric are Euclidean balls.
\end{proof}
\begin{theorem}
\label{halfplanelemma}
  Let $x \in G = \mathbb{H}^n$ and $r\in(0,1)$. Then
  \[
    B_s(x,r)=B^n \left( x-e_n x_n \left( 1-\frac{1+r^2}{1-r^2} \right) ,\frac{2x_n r}{1-r^2} \right).
  \]
\end{theorem}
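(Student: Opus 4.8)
The plan is to combine the explicit formula \eqref{formulaHS} with Proposition \ref{Beardonprop1}. By Proposition \ref{Beardonprop1} we already know that $B_s(x,r)$ is a Euclidean ball, so only its center and radius remain to be pinned down. Write $x=(\hat x, x_n)$ and $y=(\hat y, y_n)$ with $\hat x,\hat y\in\R^{n-1}$ and $x_n,y_n>0$, and let $y'=(y_1,\dots,y_{n-1},-y_n)$ be the reflection from \eqref{formulaHS}, so that $\partial B_s(x,r)$ is the locus $s_G(x,y)=r$, i.e. $|x-y|=r\,|x-y'|$. Since $s_G$ is invariant under the Euclidean isometries that preserve $\Hn$ and fix $x$, it is in particular invariant under rotations about the vertical line $\{\hat y=\hat x\}$. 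Hence the Euclidean ball $B_s(x,r)$ is rotationally symmetric about that line, and its center must lie on it.

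It therefore suffices to locate the two points where $\partial B_s(x,r)$ meets the line $\hat y=\hat x$. On this line $|x-y|=|x_n-y_n|$ and $|x-y'|=x_n+y_n$, so $|x_n-y_n|=r(x_n+y_n)$ splits into the two cases $y_n>x_n$ and $y_n<x_n$ and yields the heights
\[
  y_n^+ = x_n\,\frac{1+r}{1-r}, \qquad y_n^- = x_n\,\frac{1-r}{1+r}.
\]
The center is the midpoint, at height $\tfrac12(y_n^++y_n^-)=x_n\frac{1+r^2}{1-r^2}=x_n+x_n\bigl(\frac{1+r^2}{1-r^2}-1\bigr)$, which identifies it as $x-e_n x_n\bigl(1-\frac{1+r^2}{1-r^2}\bigr)$, while the radius is the half-distance $\tfrac12(y_n^+-y_n^-)=\frac{2x_n r}{1-r^2}$, exactly as claimed. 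Because $r\in(0,1)$ gives $1-r^2>0$ and $y_n^->0$, the resulting ball lies inside $\Hn$, as it must.

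For completeness I would also verify this directly, without appealing to Proposition \ref{Beardonprop1}: squaring $|x-y|=r\,|x-y'|$ and expanding gives
\[
  |\hat x-\hat y|^2+(x_n-y_n)^2 = r^2\bigl(|\hat x-\hat y|^2+(x_n+y_n)^2\bigr).
\]
Dividing by $1-r^2>0$, setting $c=\frac{1+r^2}{1-r^2}$, and completing the square in $y_n$ converts this into $|\hat x-\hat y|^2+(y_n-c\,x_n)^2=(c^2-1)x_n^2$, a sphere with center $(\hat x, c\,x_n)$ and radius $x_n\sqrt{c^2-1}$; the identity $c^2-1=\frac{4r^2}{(1-r^2)^2}$ then reproduces the center and radius found above. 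The hard part here is essentially nonexistent: the content is bookkeeping, and the only points needing care are the case split forced by the absolute value $|x_n-y_n|$, the strict positivity $1-r^2>0$ (used to divide and to take the square root), and the final check that the ball remains in $\Hn$.
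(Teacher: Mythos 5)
Your proposal is correct and follows essentially the same route as the paper: invoke Proposition \ref{Beardonprop1} to know the ball is Euclidean, intersect $\partial B_s(x,r)$ with the vertical line through $x$ to get the two heights $x_n\frac{1\pm r}{1\mp r}$, and read off the center and radius as midpoint and half-distance. Your explicit remark that rotational symmetry forces the center onto that vertical line, and your independent completing-the-square verification, tidily fill in details the paper leaves implicit.
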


\begin{proof}
  By symmetry of the domain it suffices to consider only the case $n=2$ and we may assume that $x=(0,a)$, $a>0$. First we select points $q=(0,q_2)$ and $w=(0,w_2)$ such that $s(x,q)=s(x,w)=r$ where $q_2 \in (0,a)$, $w_2 > a$ and $r\in(0,1)$. By the definition of $s$ we get for $q_2$
\begin{equation*}
\frac{a-q_2}{a+q_2}=r
\end{equation*}
which is equivalent for $q_2=\frac{a-ar}{1+r}$. In a similar way we obtain $w_2=\frac{-a-ar}{-1-r}$. With a simple computation we get
\begin{align*}
|C|&=\frac{w_2+q_2}{2}=\frac{a(1+r^2)}{1-r^2} \\
R&=\frac{w_2-q_2}{2}=\frac{2ar}{1-r^2}
\end{align*}
and the assertion follows from Proposition \ref{Beardonprop1}.
%\begin{figure}[!ht]
%\begin{center}
%\label{kuva2}
%\includegraphics[width=10cm]{kuva2.eps}
%\caption{Illustration of the Theorem \ref{halfplanelemma}.}
%\end{center}
%\end{figure}
%Next we choose $y=(y_1,y_2)$ such that $y_1\in\mathbb{R}$ and $y_2 > 0$. To prove the theorem we need to show that $s(x,y)=r$ if and only if $|C-y|=R$ for $C=\frac{a(1+r^2)}{1-r^2}i$. By using polar coordinates to $y$ with respect to the center point $C$ and radius $R$ we obtain
%\begin{equation*}
%y=C+R e^{it} = \left(\frac{2ar}{1-r^2}\cos t, \frac{a(1+r^2)}{1-r^2}+\frac{2ar}{1-r^2}\sin t\right), \quad t \in [0,2\pi).
%\end{equation*}
%Next we need to find the point $z \in \partial G$ which gives the supremum in the definition of $s$. In other words we need to maximize following function
%\begin{equation*}
%f(z)=\frac{|x-y|}{\sqrt{z^2+a^2}+\sqrt{(y_1-z)^2+y_2^2}}.
%\end{equation*}
%By using Lemma \ref{lemma2} we obtain that $z=\frac{a y_1}{y_2+a}$ maximizes function $f(z)$. By transforming previously obtained polar coordinates to $z$ we get
%\begin{equation*}
%z = \frac{a R \cos t}{a+|C|+R \sin t} = \frac{a r\cos t}{1+r\sin t}.
%\end{equation*}
%By using these polar coordinate expressions of $y$ and $z$ we can write
%\begin{equation*}
%s(x,y) = f(z) = \frac{\frac{2ar}{(1-r^2)} \sqrt{1+r^2+2r\sin t}}{\frac{a \sqrt{(1+r^2+2r\sin t)^3}}{(1-r^2)(1+r\sin t)}\frac{a \sqrt{1+r^2+2r\sin t}}{1+r\sin t}} = r
%\end{equation*}
%and the assertion follows.
\end{proof}

%\begin{figure}[!ht]
%\begin{center}
%\label{kuva4}
%\includegraphics[width=10cm]{kuva4.eps}
%\caption{Contours of the metric balls $B_s(x,r)$ in $\mathbb{H}^2$ where $x=(0,1)$ and $r=0.2, 0.4, 0.5$ and $0.6$.}
%\end{center}
%\end{figure}

\begin{corollary}
\label{halfplanecor1}
  Let $x \in G = \mathbb{H}^n$ and $r\in(0,x_n)$. Then
  \[
    B^n(x,r)=B_s \left( x-e_n \left( x_n-\sqrt{x_n^2-r^2} \right) ,\frac{x_n-\sqrt{x_n^2-r^2}}{r} \right).
  \]
\end{corollary}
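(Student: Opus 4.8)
The plan is to read this corollary as the \emph{inverse} of Theorem~\ref{halfplanelemma}: I will feed the claimed center and radius into that theorem and check that the Euclidean ball it produces is exactly $B^n(x,r)$. As in the proof of Theorem~\ref{halfplanelemma}, the symmetry of $\mathbb{H}^n$ reduces everything to $n=2$, and since the first $n-1$ coordinates of the proposed $s$-center already agree with those of $x$, they are preserved throughout; so I may assume $x=(0,x_n)$ and track only the vertical coordinate and the radius. Write $c = x - e_n(x_n - \sqrt{x_n^2-r^2})$ and $\rho = (x_n - \sqrt{x_n^2-r^2})/r$ for the proposed $s$-center and $s$-radius, and abbreviate $s := \sqrt{x_n^2-r^2}$, so that $c=(0,s)$ and $\rho = (x_n-s)/r$. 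These values are not guessed: they are forced by matching the two quantities Theorem~\ref{halfplanelemma} assigns to an $s$-ball, namely the vertical center coordinate $c_n\frac{1+\rho^2}{1-\rho^2}=x_n$ and the radius $\frac{2c_n\rho}{1-\rho^2}=r$; dividing these gives $\frac{2\rho}{1+\rho^2}=r/x_n$, i.e. the quadratic $r\rho^2-2x_n\rho+r=0$, whose root in $(0,1)$ is precisely $\rho=(x_n-s)/r$, and back-substitution yields $c_n=s$.

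Before invoking Theorem~\ref{halfplanelemma} on the pair $(c,\rho)$ I must verify its hypotheses, namely $c\in\mathbb{H}^n$ and $\rho\in(0,1)$ (note that the role of the theorem's radius is now played by $\rho$). The first holds since $c_n=s=\sqrt{x_n^2-r^2}>0$, using $r<x_n$. For $\rho\in(0,1)$, positivity is immediate, while $\rho<1$ is equivalent to $x_n-r<\sqrt{x_n^2-r^2}$, which after squaring (the right side being nonnegative) reduces to $r<x_n$. This is exactly where the hypothesis $r\in(0,x_n)$ enters.

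With the hypotheses secured, applying Theorem~\ref{halfplanelemma} to $B_s(c,\rho)$ expresses it as the Euclidean ball with center $c-e_nc_n\bigl(1-\frac{1+\rho^2}{1-\rho^2}\bigr)$ and radius $\frac{2c_n\rho}{1-\rho^2}$. The one genuinely computational step is simplifying $1\pm\rho^2$: using $s^2=x_n^2-r^2$ one finds $1-\rho^2=2s(x_n-s)/r^2$ and $1+\rho^2=2x_n(x_n-s)/r^2$. Substituting $c_n=s$ and $\rho=(x_n-s)/r$ then cancels the common factor $x_n-s$, collapsing the radius to $r$ and the vertical center coordinate $c_n\frac{1+\rho^2}{1-\rho^2}$ to $x_n$, while the horizontal coordinate stays at $0$; hence $B_s(c,\rho)=B^n(x,r)$, as claimed. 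I do not expect any real obstacle: the content is the elementary algebra of these two simplifications, arranged so that $x_n-s$ cancels, and the only point demanding care is the bookkeeping that the parameter acting as a radius in Theorem~\ref{halfplanelemma} is $\rho$, which must be checked to lie in $(0,1)$.
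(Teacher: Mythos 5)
Your proposal is correct and is essentially the argument the paper intends: the corollary is stated without proof as an immediate consequence of Theorem \ref{halfplanelemma}, and your verification — inverting the center/radius map of that theorem, checking $\rho=(x_n-\sqrt{x_n^2-r^2})/r\in(0,1)$ under the hypothesis $r<x_n$, and simplifying $1\pm\rho^2$ so the factor $x_n-s$ cancels — is exactly the computation being left to the reader. The algebra checks out.
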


\begin{corollary}
\label{halfplanecor2}
  Let $x \in G = \mathbb{H}^n$ and $r > 0$. Then
  \[
    B^n \left( x, \frac{\sqrt{r^2+x_n^2}-x_n}{r} \right) =B_s \left( x-e_n \left( x_n-\sqrt{x_n^2+r^2} \right) ,r \right).
  \]
\end{corollary}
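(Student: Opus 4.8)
The plan is to reduce to the plane and then identify both sides as Euclidean balls through Theorem~\ref{halfplanelemma}. First I would exploit the symmetry of $\Hn$: the domain, the family of Euclidean balls, and the metric $s$ are all invariant under the orthogonal maps fixing the $x_n$-axis, and by \eqref{formulaHS} the value $s_{\Hn}(x,y)$ depends only on $x_n,y_n$ and $|x-y|$; hence after such a rotation it suffices to take $n=2$ and $x=(0,a)$ with $a=x_n>0$. Observe that the proposed centre $c=x-e_n(x_n-\sqrt{x_n^2+r^2})$ then also lies on the vertical axis, with second coordinate $c_2=\sqrt{a^2+r^2}$, and write $\rho=\frac{\sqrt{r^2+a^2}-a}{r}$ for the proposed Euclidean radius.

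Next I would apply Theorem~\ref{halfplanelemma}, which (via Proposition~\ref{Beardonprop1}) expresses $B_s(c,r)$ as an explicit Euclidean ball with centre and radius determined by $c_2$ and $r$. The assertion is thus equivalent to two scalar identities: that this centre equals $x$ and that this radius equals $\rho$. A clean way to organise the check is through the two points where each ball meets the vertical axis. As in the proof of Theorem~\ref{halfplanelemma}, the $s$-ball $B_s(c,r)$ crosses the axis at heights $c_2\frac{1-r}{1+r}$ and $c_2\frac{1+r}{1-r}$, whose product is $c_2^2$; the Euclidean ball $B^2(x,\rho)$ crosses it at $a-\rho$ and $a+\rho$. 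Matching the two pairs of intercepts---equivalently, checking that $\sqrt{(a-\rho)(a+\rho)}=c_2$ and that the half-differences agree---reduces the proposition to elementary algebra in $a$ and $r$.

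The step I expect to be the main obstacle is exactly this algebraic verification. Solving the intercept relations produces a quadratic in the radius parameter, so one must isolate the root lying in $(0,1)$ and then simplify the nested radicals to recover the stated closed forms; this branch-and-simplify computation, together with the bookkeeping of the sign of $x_n-\sqrt{x_n^2+r^2}$, is the only delicate point, and it parallels the computation already carried out for the preceding corollary. Once the two pairs of axis-intercepts are shown to coincide, the two Euclidean balls agree as sets, and the general-$n$ statement follows from the planar one by the rotational reduction of the first step.
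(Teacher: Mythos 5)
Your overall strategy --- reduce to $n=2$ by symmetry, invoke Theorem~\ref{halfplanelemma} to identify $B_s$-balls as Euclidean balls, and compare the two balls through their intercepts with the vertical axis --- is the natural one, and since the paper offers no proof of this corollary it is presumably what the authors intend. The problem is that you defer the entire substantive content to ``elementary algebra'' that you expect to succeed, whereas if you actually carry it out it fails. Concretely: with $a=x_n$, $c_2=\sqrt{a^2+r^2}$ and $\rho=\frac{\sqrt{a^2+r^2}-a}{r}$, your own intercept criterion requires $(a-\rho)(a+\rho)=c_2^2$, i.e.\ $a^2-\rho^2=a^2+r^2$, i.e.\ $\rho^2=-r^2$, which is impossible. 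Equivalently, Theorem~\ref{halfplanelemma} puts the Euclidean centre of $B_s(c,r)$ at height $c_2\frac{1+r^2}{1-r^2}>c_2=\sqrt{a^2+r^2}>a$, so it can never be $x$. Two further symptoms of the same problem: you apply Theorem~\ref{halfplanelemma} with $s$-radius $r$ for arbitrary $r>0$, although the theorem requires $r\in(0,1)$; and for small $x_n$ the alleged Euclidean ball $B^n\bigl(x,\frac{\sqrt{r^2+x_n^2}-x_n}{r}\bigr)$ is not even contained in $\Hn$.

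What this reveals is that the corollary as printed has $B^n$ and $B_s$ interchanged; the identity your method actually proves is
\[
B_s\left( x, \frac{\sqrt{r^2+x_n^2}-x_n}{r} \right) = B^n\left( x-e_n\left( x_n-\sqrt{x_n^2+r^2} \right), r \right),
\]
which is the natural companion to Corollary~\ref{halfplanecor1}: here $r>0$ is the Euclidean radius, the $s$-radius $\frac{\sqrt{r^2+x_n^2}-x_n}{r}$ lies in $(0,1)$ automatically, and the check $Z^2-R^2=h^2$ (with $Z,R$ the Euclidean centre-height and radius of the ball produced by Theorem~\ref{halfplanelemma} and $h$ the $s$-centre height) goes through. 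So your proposal is not a proof of the stated corollary: the one step you postpone is exactly the step that refutes the statement as written, and you should either prove the corrected identity or flag the misprint rather than assert that the algebra will close.
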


\begin{corollary}
  Let $x \in \mathbb{H}^n$ and $r \in (0,1)$. Then
  \begin{eqnarray*}
    & B_s(x,r)=B_\rho \left( x-e_n x_n \left( 1- a \right),t \right),\\
    & t = \arth\left(\frac{2r}{1+r^2}\right), \, b = \frac{(1+r^2)}{\cosh(t)(1-r^2)}.
  \end{eqnarray*}
\end{corollary}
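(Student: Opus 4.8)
The plan is to read the identity straight off the conformal comparison between $s$ and the hyperbolic metric provided by Proposition \ref{Beardonprop}. Since $s_{\Hn}(x,y) = \tanh\frac{\rho_{\Hn}(x,y)}{2}$ for all $x,y \in \Hn$, and $\tanh$ is a strictly increasing bijection of $(0,\infty)$ onto $(0,1)$, the condition $s_{\Hn}(x,y) < r$ is equivalent to $\tanh\frac{\rho_{\Hn}(x,y)}{2} < r$, hence to $\rho_{\Hn}(x,y) < 2\,\arth r$. This already exhibits $B_s(x,r)$ as the hyperbolic ball centered at $x$ of radius $2\,\arth r$; the remaining work is purely to rewrite this radius in the stated form $t = \arth\!\left(\frac{2r}{1+r^2}\right)$ and to verify that the displayed center coincides with $x$.

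For the radius I would use the double-angle formula $\tanh(2u) = \frac{2\tanh u}{1+\tanh^2 u}$. Taking $u = \arth r$ gives $\tanh(2\,\arth r) = \frac{2r}{1+r^2}$, and applying $\arth$ to both sides yields $2\,\arth r = \arth\!\left(\frac{2r}{1+r^2}\right) = t$. Thus the hyperbolic radius of $B_s(x,r)$ is exactly $t$.

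For the center, the relation $\tanh t = \frac{2r}{1+r^2}$ gives $\cosh t = \frac{1+r^2}{1-r^2}$, so the constant $b = \frac{1+r^2}{\cosh(t)(1-r^2)}$ appearing in the statement equals $1$; hence the factor $1-a$ (reading $a=b$) vanishes and the center $x - e_n x_n(1-a)$ reduces to $x$, i.e. the hyperbolic ball is centered at $x$ itself. There is no genuine obstacle here — the only thing to watch is the bookkeeping among the constants $a$, $b$, and $t$. As an independent confirmation I would convert $B_\rho(x,t)$ into a Euclidean ball via the standard formula \cite[(2.11)]{Vu2}, which says that the hyperbolic ball about $x=(x',x_n)$ of radius $t$ is the Euclidean ball of center $(x', x_n\cosh t)$ and radius $x_n\sinh t$; substituting $\cosh t = \frac{1+r^2}{1-r^2}$ and $\sinh t = \frac{2r}{1-r^2}$ recovers precisely the Euclidean center $x - e_n x_n\!\left(1-\frac{1+r^2}{1-r^2}\right)$ and radius $\frac{2x_n r}{1-r^2}$ of Theorem \ref{halfplanelemma}, which cross-checks the entire identity.
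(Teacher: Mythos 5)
Your proof is correct, but it takes a genuinely more direct route than the paper. The paper proves this corollary by equating two Euclidean-ball representations: the formula $B_\rho(ai,t)=B^2(a\cosh(t)i,a\sinh(t))$ from \cite[2.2]{vu85} and the Euclidean description of $B_s(x,r)$ from Theorem \ref{halfplanelemma}, and then solving the resulting system $a\cosh t=\tfrac{x_2(1+r^2)}{1-r^2}$, $a\sinh t=\tfrac{2x_2r}{1-r^2}$ for the hyperbolic center and radius. You instead start from Proposition \ref{Beardonprop}, so that $s_{\Hn}(x,y)<r$ is immediately equivalent to $\rho_{\Hn}(x,y)<2\arth r$; this exhibits $B_s(x,r)$ as a hyperbolic ball \emph{centered at $x$} for free, and all that remains is the identity $2\arth r=\arth\bigl(\tfrac{2r}{1+r^2}\bigr)$ and the observation that $\cosh t=\tfrac{1+r^2}{1-r^2}$ forces the constant $b$ (read as $a$) to equal $1$, collapsing the displayed center to $x$. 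Your approach buys conceptual clarity — it explains \emph{why} the ball is hyperbolically centered at $x$ (monotonicity of $\tanh$), which the paper's computation only recovers after the fact — while the paper's method is the one that generalizes to situations where no exact identity like $s=\tanh(\rho/2)$ is available. Your closing cross-check via \cite[(2.11)]{Vu2} is essentially a reconstruction of the paper's own argument, so nothing is missing; you have also correctly diagnosed the $a$ versus $b$ notational slip in the statement.
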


\begin{proof}
  By symmetry of the domain it sufficies to prove the result in the case $n=2$ for $x=(0,a)$. By \cite[2.2]{vu85} and Theorem \ref{halfplanelemma}
\begin{equation*}
B_\rho(a i,t)=B^2(a\cosh(t)i,a\sinh(t))=B^2\left(\frac{a(1+r^2)}{1-r^2}i,\frac{2a r}{1-r^2}\right),
\end{equation*}
which is equivalent to
\begin{equation*}
\begin{cases}
a\cosh t=\frac{x_2(1+r^2)}{1-r^2} \\
a\sinh t=\frac{2x_2 r}{1-r^2}
\end{cases}
\end{equation*}
and the assertion follows.
\end{proof}

%==================================================================================================================
%==================================================================================================================
%==================================================================================================================
%==================================================================================================================
\section{Convexity of balls in punctured half-space and polygons}
%==================================================================================================================
%==================================================================================================================
%==================================================================================================================

We consider the triangular ratio metric in the punctured half-space $G= \Hn \setminus \{ e_n \}$. By \eqref{intersection}, Theorems \ref{punctspace} and \ref{halfplanelemma} it is clear that the following result holds.

\begin{lemma}\label{puncthalfplane}
  Let $x \in G= \Hn \setminus \{ e_n \}$ and $r \in (0,1/2]$. Then $B_s(x,r)$ is convex.
\end{lemma}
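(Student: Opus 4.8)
The plan is to combine the intersection formula \eqref{intersection} with the two explicit convexity results already established, namely Theorem \ref{punctspace} for the punctured space and Theorem \ref{halfplanelemma} for the half-space. The key observation is that the punctured half-space $G = \Hn \setminus \{e_n\}$ can be written as an intersection of two simpler domains, $\Hn$ and $\R^n \setminus \{e_n\}$, so that $\partial G = \bdr \Hn \cup \{e_n\}$. Applying \eqref{intersection} to $G$ and grouping the boundary points accordingly, I would write
\[
  B_{s_G}(x,r) = \Bigl( \bigcap_{z \in \bdr \Hn} B_{s_{\R^n \setminus \{z\}}}(x,r) \Bigr) \cap B_{s_{\R^n \setminus \{e_n\}}}(x,r).
\]
By the monotonicity of $s$ with respect to the domain and the fact that intersecting the constraints from all $z \in \bdr \Hn$ reconstructs exactly the half-space ball, the first intersected factor equals $B_{s_{\Hn}}(x,r)$.

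The second step is to invoke the convexity of each factor in the range $r \in (0,1/2]$. By Theorem \ref{halfplanelemma} the factor $B_{s_{\Hn}}(x,r)$ is a Euclidean ball, hence convex for every $r \in (0,1)$ and in particular for $r \le 1/2$. By Theorem \ref{punctspace} the factor $B_{s_{\R^n \setminus \{e_n\}}}(x,r)$ is convex precisely when $r \le 1/2$. Since the intersection of convex sets is convex, it follows that $B_{s_G}(x,r)$ is convex for all $r \in (0,1/2]$, which is the assertion.

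I expect the only genuine subtlety to lie in the first step: justifying rigorously that $\bigcap_{z \in \bdr \Hn} B_{s_{\R^n \setminus \{z\}}}(x,r)$ collapses to the single half-space ball $B_{s_{\Hn}}(x,r)$. This is where \eqref{intersection}, applied to the domain $\Hn$ itself, must be used to identify that intersection, rather than treating it as a new computation. The remainder of the argument is purely formal, relying on the elementary fact that convexity is preserved under arbitrary intersections; no new estimates or case analysis are needed, which is consistent with the excerpt's remark that the result follows directly from \eqref{intersection} together with Theorems \ref{punctspace} and \ref{halfplanelemma}.
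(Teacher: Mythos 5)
Your proposal is correct and follows essentially the same route as the paper, which proves the lemma by combining the intersection formula \eqref{intersection} with the convexity results of Theorems \ref{punctspace} and \ref{halfplanelemma} and the fact that an intersection of convex sets is convex. The one subtlety you flag, identifying $\bigcap_{z \in \partial \Hn} B_{s_{\R^n \setminus \{z\}}}(x,r)$ with $B_{s_{\Hn}}(x,r)$ via \eqref{intersection} applied to $\Hn$, is handled implicitly in the paper in exactly the way you describe.
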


However, the upper bound for the radius $r$ in Lemma \ref{puncthalfplane} is not sharp. To see this we can choose $x$ close to $\partial \Hn$ and far from $e_n$. Now $B_s(x,r)$ is a Euclidean ball even for $r \in (1/2,r_0]$. For example for $x=e_n/10$ it can be verified that $B_s(x,r)$ is convex for $r \in (0,3/4]$, see Figure \ref{PHPpict}.

\begin{figure}[!ht]
\begin{center}
\label{kuva5}
\includegraphics[width=10cm]{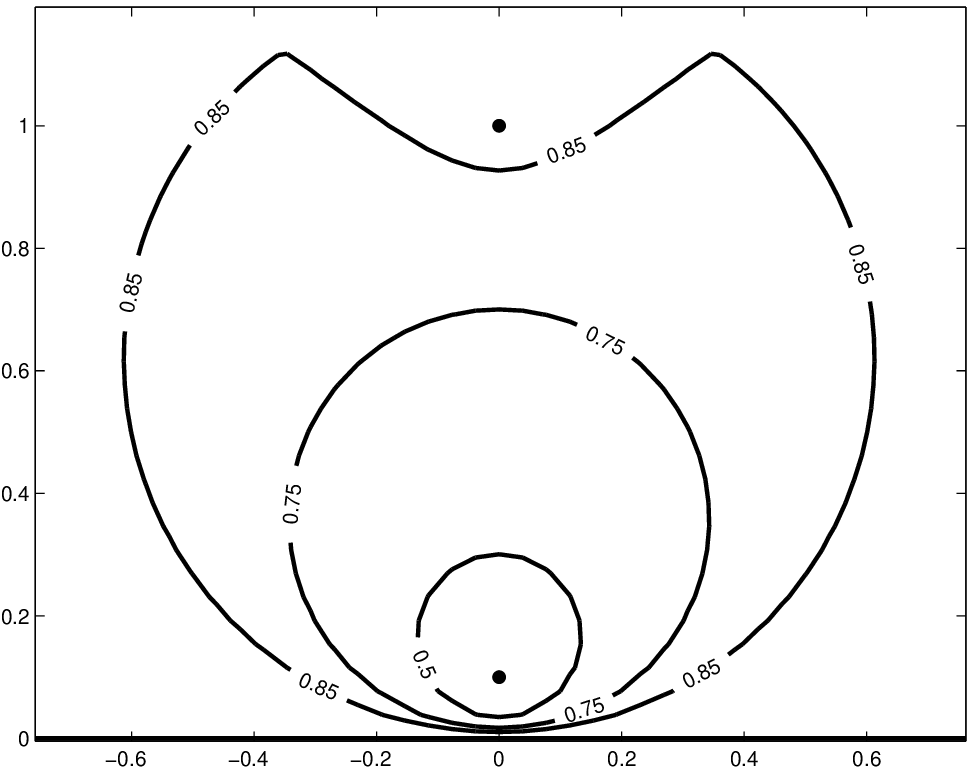}
\caption{Metric balls $B_s(x,r)$ in $\mathbb{H}^2\setminus \{ e_2 \}$ where $x=e_2/10$ and $r=0.5$, $0.75$ and $0.85$.\label{PHPpict}}
\end{center}
\end{figure}

The disks $B_s(x,r)$ for $x \in \R^2 \setminus \{ e_2 \}$ and large enough radius consists of two parts separated by curve $\{ (t,(t^2+1)/2) \colon t \in \R \}$. The lower part consists of $B_s(x,r)$ with respect to $\H^2$ and the upper part consists of $B_s(x,r)$ with respect to $\R^2 \setminus \{ e_2 \}$. The following theorem uses this idea and gives upper bound for the radius of convexity, when the center point $x$ is close to $\partial \HUP$.

\begin{theorem}\label{HUPthm}
  Let $x =(x_1,x_2) \in G= \HUP \setminus \{ e_n \}$ with $x_2 < |x_1|$ and $r \in (0,r_0]$, where
  \[
    r_0 = \frac{\sqrt{x_1^2+x_2^2}-\sqrt{2}x_2}{|x_1|+x_2}.
  \]
  Then $B_s(x,r)$ is convex.
\end{theorem}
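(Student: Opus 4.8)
The plan is to show that for $r\le r_0$ the puncture at $e_2$ does not influence the metric ball, so that $B_s(x,r)$ coincides with the half-plane ball $B_{s_{\HUP}}(x,r)$, which is a Euclidean disk by Theorem~\ref{halfplanelemma} and hence convex. Starting from \eqref{intersection} and the fact that $\partial G=\partial\HUP\cup\{e_2\}$, I would record the decomposition
\[
  B_s(x,r)=B_{s_{\HUP}}(x,r)\cap B_{s_{\R^2\setminus\{e_2\}}}(x,r).
\]
By Theorem~\ref{halfplanelemma} the first factor is the explicit disk with centre $\bigl(x_1,\,x_2(1+r^2)/(1-r^2)\bigr)$ and radius $2x_2r/(1-r^2)$. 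Since this disk is convex, it suffices to prove the inclusion $B_{s_{\HUP}}(x,r)\subseteq B_{s_{\R^2\setminus\{e_2\}}}(x,r)$, for then $B_s(x,r)=B_{s_{\HUP}}(x,r)$.

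Next I would identify the geometric meaning of $r_0$. By symmetry assume $x_1>0$. Computing the distance from the centre of the disk to the line $\ell\colon y_2=y_1$ and equating it to the radius leads to
\[
  r^2(x_1+x_2)+2\sqrt{2}\,x_2\,r-(x_1-x_2)=0,
\]
whose positive root is exactly $r_0$. Thus $r_0$ is the radius at which $B_{s_{\HUP}}(x,r)$ first becomes tangent to $\ell$, and for every $r\le r_0$ the disk lies in the closed half-cone $K=\{y:0\le y_2\le y_1\}$; indeed its centre lies below $\ell$, and any disk point has $y_2>0$, forcing $y_1>0$. The hypothesis $x_2<|x_1|$ is exactly what makes $r_0>0$ and places $x$ in the interior of $K$.

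It then remains to prove the key comparison that on $K$ the half-plane metric dominates the punctured-plane metric: whenever $x,y\in K$,
\[
  s_{\R^2\setminus\{e_2\}}(x,y)\le s_{\HUP}(x,y),\qquad\text{i.e.}\qquad |x-y'|\le|x-e_2|+|y-e_2|,
\]
with $y'=(y_1,-y_2)$, using \eqref{formulaHS} and $s_{\R^2\setminus\{e_2\}}(x,y)=|x-y|/(|x-e_2|+|y-e_2|)$. Geometrically this says that $e_2$ lies outside the ellipse with foci $x,y$ tangent to $\partial\HUP$; because both foci lie in $K$, which sits below the tangent line $\ell$ to the parabola $\{(t,(t^2+1)/2)\}=\{w:|w-e_2|=w_2\}$ at $(1,1)$, the tangent ellipse remains too low to enclose $e_2$. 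Granting the comparison, every $y$ in the disk $B_{s_{\HUP}}(x,r)\subseteq K$ satisfies $s_{\R^2\setminus\{e_2\}}(x,y)\le s_{\HUP}(x,y)<r$, which gives the inclusion and hence the convexity of $B_s(x,r)$.

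I expect this comparison to be the main obstacle. After one squaring it becomes
\[
  2\bigl(x_1y_1-x_2y_2-x_2-y_2+1\bigr)+2\sqrt{\bigl(x_1^2+(x_2-1)^2\bigr)\bigl(y_1^2+(y_2-1)^2\bigr)}\ge0,
\]
and, since the bracketed term may be negative, the square root cannot simply be discarded; feeding in the cone constraints $0\le x_2\le x_1$ and $0\le y_2\le y_1$ and squaring a second time reduces everything to a polynomial inequality that is genuinely sharp (equality occurs, e.g., for $x=(3,3)$ and $y=(3/5,3/5)$ on $\partial K$). This tight algebraic estimate, rather than the soft geometric part, is where the real work lies, and the $\sqrt{2}$ in $r_0$ is the signature of the $45^\circ$ tangency of $\ell$ to the parabola that governs when the puncture $e_2$ begins to affect the ball.
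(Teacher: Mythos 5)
Your overall route is the same as the paper's: identify $r_0$ as the radius at which the Euclidean disk $B_{s_{\HUP}}(x,r)$ from Theorem~\ref{halfplanelemma} becomes tangent to the line $\ell=\{w_2=|w_1|\}$ (your quadratic $r^2(x_1+x_2)+2\sqrt2\,x_2r-(x_1-x_2)=0$ is exactly the paper's relation $\frac{2x_2r_0}{1-r_0^2}=\frac{x_1-x_2\frac{1+r_0^2}{1-r_0^2}}{\sqrt2}$), and then argue that once the half-plane disk sits inside the cone $K=\{0\le w_2\le w_1\}$ the puncture at $e_2$ is invisible, so $B_s(x,r)=B_{s_{\HUP}}(x,r)$ is a Euclidean disk. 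In fact your formulation is the more careful one: the paper only checks the tangency and appeals to the informal discussion about the parabola $\{(t,(t^2+1)/2)\}$, whereas the statement that actually has to be verified is the \emph{pairwise} comparison $s_{\R^2\setminus\{e_2\}}(x,y)\le s_{\HUP}(x,y)$, and this is false for general pairs below the parabola (e.g.\ $x=(-10,0.9)$, $y=(10,0.9)$), so the restriction to one cone $K$ that you impose is genuinely needed.

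The gap is that you never prove this key comparison: you declare it ``the main obstacle,'' propose squaring twice and ``feeding in the cone constraints,'' and stop at the claim that the resulting polynomial inequality is sharp. As written, the proof is therefore incomplete precisely at its load-bearing step, and the brute-force route you sketch is unpleasant because, as you note, the inequality is attained with equality along a whole curve. There is, however, a two-line argument that closes it. Reflection in the line $\ell$ sends $e_2=(0,1)$ to $e_1=(1,0)\in\partial\HUP$, and every $w$ with $w_2\le w_1$ lies on the $e_1$-side of $\ell$, so $|w-e_2|\ge|w-e_1|$. Hence for $x,y\in K$,
\[
  |x-e_2|+|y-e_2|\;\ge\;|x-e_1|+|y-e_1|\;\ge\;\min_{z\in\partial\HUP}\bigl(|x-z|+|y-z|\bigr)=|x-y'| ,
\]
which is exactly $s_{\R^2\setminus\{e_2\}}(x,y)\le s_{\HUP}(x,y)$. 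This also explains your equality example $x=(3,3)$, $y=(3/5,3/5)$: equality forces $x,y\in\ell$ and $e_1\in[x,y']$, i.e.\ $x_1+y_1=2x_1y_1$. With this lemma inserted, your argument is complete and, to my mind, tighter than the published proof.
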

\begin{proof}
  By simple computation we obtain that $(t^2+1)/2 > |t|$ and we show that $B_s(x,r_0)$ is below the curve $\{ (t,|t|) \colon t \in \R \}$. Let $R = 2x_2 r_0/(1-r_0^2)$ and $y = x_1+x_2(1+r_0^2)/(1-r_0^2)i$ be the Euclidean radius and center of $B_s(x,r)$. By geometry we obtain that $R=(x_1-y_2)/\sqrt{2}$ and thus
  \[
    \frac{2x_2 r_0}{1-r_0^2} = \frac{x_1-x_2 \frac{1+r_0^2}{1-r_0^2}}{\sqrt{2}}
  \]
  which implies the assertion.
\end{proof}

%==================================================================================================================
%\section{Polygon}

We consider next the triangular ratio metric in the angular domain
\[
  A_\alpha = \left\{ z \in \mathbb{R}^2 \colon \measuredangle (z,0,e_1) < \alpha/2 \right\}, \quad \alpha \in (0,2\pi).
\]
The boundary $\partial A_\alpha$ consists of two half-lines, which we call sides of the domain.

\begin{proposition}\label{ellipse}
  Let $x \in A_\alpha$, $\alpha \in (0,\pi]$ and $l$ be the line through the points 0 and $\overline{x}$. Then for all $y \in l \cap A_\alpha$ the maximal ellipse in $\overline{A_\alpha}$ touches both sides of the angular domain.
\end{proposition}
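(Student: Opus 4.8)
The plan is to translate the tangency condition into the elementary reflection property of ellipses and then verify the resulting equality by the law of cosines. First I would normalise by a rotation so that $A_\alpha$ is symmetric about the $x_1$-axis, with its two sides lying on the lines $\ell_+$ and $\ell_-$ through $0$ at angles $\alpha/2$ and $-\alpha/2$. Writing $\theta=\measuredangle(x,0,e_1)$, we may assume by the symmetry of the domain that $x$ lies at angle $\theta\in[0,\alpha/2)$ above the axis; then $\overline x$ lies at angle $-\theta$, so every $y\in l\cap A_\alpha$ lies on the ray from $0$ at angle $-\theta$. Thus the two focal radii $0x$ and $0y$ are symmetric about the bisector of the wedge, which is exactly the configuration one expects to force an inscribed ellipse to touch both sides.

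Next I would invoke the reflection characterisation of tangency. For a line $\ell$ and the reflection $\sigma_\ell$ across it, every $z\in\ell$ satisfies $|z-x|+|z-y|=|z-\sigma_\ell(x)|+|z-y|\ge|\sigma_\ell(x)-y|$, with equality exactly at the point where the segment $[\sigma_\ell(x),y]$ meets $\ell$. Hence the smallest confocal ellipse with foci $x,y$ that reaches $\ell$ has major-axis sum equal to $|\sigma_\ell(x)-y|$, and it is tangent to $\ell$ there. Growing the confocal family from the degenerate segment $[x,y]$, the maximal ellipse contained in $\overline{A_\alpha}$ is the one whose sum equals $\min\{\,|\sigma_{\ell_+}(x)-y|,\ |\sigma_{\ell_-}(x)-y|\,\}$, and it touches both sides precisely when these two reflected distances coincide.

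It then remains to compute them. Reflecting $x$ (modulus $|x|$, angle $\theta$) across $\ell_+$ gives a point of modulus $|x|$ at angle $\alpha-\theta$, while reflecting across $\ell_-$ gives a point of modulus $|x|$ at angle $-\alpha-\theta$; and $y$ has angle $-\theta$. The angle between $\sigma_{\ell_+}(x)$ and $y$ is $(\alpha-\theta)-(-\theta)=\alpha$, and between $\sigma_{\ell_-}(x)$ and $y$ is $(-\alpha-\theta)-(-\theta)=-\alpha$. By the law of cosines both reflected distances equal $|x|^2+|y|^2-2|x||y|\cos\alpha$ (their squares), so $|\sigma_{\ell_+}(x)-y|=|\sigma_{\ell_-}(x)-y|$ and the maximal ellipse meets both side-lines at the same value of the sum.

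The one point that genuinely needs the hypothesis $\alpha\le\pi$ is that the two tangency points land on the sides themselves, i.e.\ on the half-lines, rather than on their extensions beyond the vertex $0$; this is where I expect the only real work. I would check that the foot $[\sigma_{\ell_\pm}(x),y]\cap\ell_\pm$ lies in the closed sector, which holds because for $\alpha\le\pi$ the angular domain is convex and both foci lie in its interior, on the correct sides of $\ell_+$ and $\ell_-$. Once this is verified, the maximal ellipse in $\overline{A_\alpha}$ is tangent to both sides, as claimed.
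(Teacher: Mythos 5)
Your argument is correct and follows essentially the same route as the paper's proof: both use the reflection characterisation of the maximal ellipse tangent to a line and then show the two reflected distances agree by the symmetry of $x$ and $y$ about the bisector of the wedge (you reflect $x$ and compute with the law of cosines; the paper reflects $y$ and argues geometrically, which is the same thing since reflections are isometries). Your extra care about the tangency points landing on the half-lines rather than their extensions is a reasonable refinement that the paper's terser proof leaves implicit.
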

\begin{proof}
  If $\textrm{Im}\, x = 0$, then $\overline{x} = x$ and the assertion follows. %We assume that $\textrm{Im}\, x < 0$. Formula \eqref{formulaHS} means geometrically that the maximal ellipse in the half-plane touches the real axis on line segment between $x$ and $\overline{y}$. Since $\measuredangle (x,0,e_1) = \measuredangle (\overline{x},0,e_1)$ the maximal ellipse touches both sides and the assertion follows.
  Let us denote the lines that contain $\partial A_\alpha$ by $s_1$ and $s_2$. Let $y \in A_\alpha$ and denote $y_1$ the reflection of $y$ with respect to line $s_1$ and similarily $y_2$ the reflection of $y$ with respect to line $s_2$. We consider maximal ellipses with foci $x$ and $y$ in the half-planes defined by lines $s_1$ and $s_2$. Formula \eqref{formulaHS} means geometrically that the maximal ellipse with foci $x$ and $y$ in half-plane defined by $s_1$ touches $s_1$ at the point $s_1 \cap [x,y_1]$. The same is true also for $s_2$. Note that the line containing $0$ and $x$ is the bisector of the lines through the origin and points $y_1$ and $y_2$. Now by geometry $|x-y_1| = |x-y_2|$ and thus the maximal ellipse in $A_\alpha$ touches both sides $s_1$ and $s_2$ and the assertion follows.
  \begin{figure}[!ht]
\begin{center}
\label{kuva6}
\includegraphics[width=7cm]{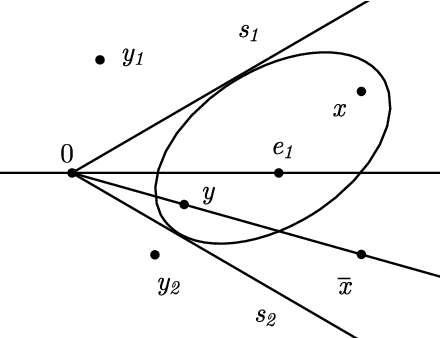}
\caption{Maximal ellipse in Proposition \ref{ellipse}.}
\end{center}
\end{figure}
\end{proof}

\begin{lemma}\label{alpha<pi}
  Let $G = A_\alpha$, $\alpha \in (0,\pi]$, $x \in G$ and $r \in (0,1)$. Then
  \[
    B_{s_G}(x,r) = B_{s_T}(x,r) \cap B_{s_U}(x,r),
  \]
  where $T$ and $U$ are the half-planes with $A_\alpha = T \cup U$. Moreover, $B_{s_G}(x,r)$ is always convex and $\partial B_{s_G}(x,r)$ is smooth for $r \le \sin \beta / \sin (\alpha/2)$, where $\beta = \measuredangle (x,0,e_1)$.
\end{lemma}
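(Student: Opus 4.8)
The plan is to reduce the angular domain to the two half-planes whose intersection is $A_\alpha$, and then to exploit that $s$-balls in a half-plane are Euclidean disks by Theorem \ref{halfplanelemma}. Let $T$ and $U$ be the half-planes bounded by the lines $l_1,l_2$ containing the two sides $s_1,s_2$ of $\partial A_\alpha$, so that $A_\alpha = T\cap U$. Since $A_\alpha\subset T$ and $A_\alpha\subset U$, monotonicity of $s$ with respect to the domain gives $s_T\le s_{A_\alpha}$ and $s_U\le s_{A_\alpha}$, hence $\max\{s_T,s_U\}\le s_{A_\alpha}$. Conversely, because $\partial A_\alpha = s_1\cup s_2$ with $s_1\subset l_1=\partial T$ and $s_2\subset l_2=\partial U$, the supremum in \eqref{definiton} over the smaller set $\partial A_\alpha$ can only be smaller, so $s_{A_\alpha}(x,y)$, being the larger of the two suprema over $s_1$ and over $s_2$, is at most $\max\{s_T(x,y),s_U(x,y)\}$. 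Combining the two inequalities yields $s_{A_\alpha}=\max\{s_T,s_U\}$, whence $B_{s_G}(x,r)=\{y : \max\{s_T,s_U\}<r\}=B_{s_T}(x,r)\cap B_{s_U}(x,r)$, the first claim. Proposition \ref{ellipse} supplies the geometric reason that the extremal boundary point sits on one of the sides.

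For convexity I would note that each of $B_{s_T}(x,r)$ and $B_{s_U}(x,r)$ is, after a similarity carrying the half-plane to $\HUP$ and applying Theorem \ref{halfplanelemma}, a Euclidean disk and hence convex; therefore their intersection $B_{s_G}(x,r)$ is convex for every $r\in(0,1)$. This establishes the unrestricted ``always convex'' assertion.

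For the smoothness statement the key observation is that the boundary of an intersection of two Euclidean disks is smooth exactly when one disk lies inside the other; if the two bounding circles cross transversally the intersection acquires two corner points. So I would compute the centres and radii of $D_1:=B_{s_T}(x,r)$ and $D_2:=B_{s_U}(x,r)$ from Theorem \ref{halfplanelemma}. Writing $\rho=|x|$ and using the distances $d_1=\rho\sin(\alpha/2-\beta)$, $d_2=\rho\sin(\alpha/2+\beta)$ of $x$ to the two sides (assume $\beta\ge 0$, so $d_1\le d_2$), the theorem gives radii $\rho_i=2d_i r/(1-r^2)$ and centres $c_i=x+n_i\,d_i\,2r^2/(1-r^2)$, where $n_i$ is the inward unit normal to $l_i$. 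A short computation using sum-to-product identities reduces $c_2-c_1$ to $\rho\sin\alpha\,(\sin\beta,\cos\beta)\,2r^2/(1-r^2)$ and $\rho_2-\rho_1$ to a multiple of $\cos(\alpha/2)\sin\beta$, so that the containment test $|c_1-c_2|\le\rho_2-\rho_1$ collapses, after cancelling the common factor $2r\rho/(1-r^2)$ and using $\sin\alpha=2\sin(\alpha/2)\cos(\alpha/2)$, to $r\sin(\alpha/2)\le\sin\beta$, i.e. $r\le\sin\beta/\sin(\alpha/2)$. For such $r$ the smaller disk $D_1$ lies in $D_2$, the intersection equals $D_1$, and $\partial B_{s_G}(x,r)=\partial D_1$ is a circle, hence smooth.

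The main obstacle is this last step: one must carry out the trigonometric simplification of $|c_1-c_2|$ and $\rho_2-\rho_1$ and check that the resulting inequality is precisely the stated bound, and one must justify cleanly the dichotomy ``disk inside disk $\iff$ smooth boundary'', including the borderline case $r=\sin\beta/\sin(\alpha/2)$ of internal tangency, where $D_1\subset D_2$ still holds and smoothness is retained.
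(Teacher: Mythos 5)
Your argument is correct, and it splits into a part that coincides with the paper's proof and a part that does not. For the intersection identity and for convexity you are on the paper's route: the paper cites \eqref{intersection} together with Theorem \ref{halfplanelemma}, while you rederive the same identity from domain monotonicity and from $\partial A_\alpha\subset\partial T\cup\partial U$ (correctly reading the statement's ``$A_\alpha=T\cup U$'' as the intended $A_\alpha=T\cap U$). For the smoothness threshold you take a genuinely different route. The paper argues via Proposition \ref{ellipse}: on the line $l$ through $0$ and $\overline{x}$ one has $s_T=s_U=s_G$, so $\partial B_{s_G}(x,r)$ is a circle exactly when it does not meet $l$ in two distinct points, and the critical radius is $s_G(x,\overline{x})=s_T(x,\overline{x})=\sin\beta/\sin(\alpha/2)$. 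You instead read off the centres $c_i=x+n_i d_i\,2r^2/(1-r^2)$ and radii $\rho_i=2d_i r/(1-r^2)$ of the two disks from Theorem \ref{halfplanelemma} and test $|c_1-c_2|\le\rho_2-\rho_1$; your simplifications are right, namely $|c_1-c_2|=\tfrac{2r^2}{1-r^2}|x|\sin\alpha$ and $\rho_2-\rho_1=\tfrac{4r}{1-r^2}|x|\cos(\alpha/2)\sin\beta$, so the test collapses to $r\sin(\alpha/2)\le\sin\beta$ exactly as claimed, with internal tangency at equality (the tangency point is $\overline{x}$, which reconciles the two arguments). Your version is more self-contained and makes explicit the dichotomy ``transversal crossing gives two corners versus nested disks give a circle,'' which the paper leaves implicit behind Proposition \ref{ellipse}; the paper's version is shorter and avoids the trigonometric computation. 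The only loose end in your write-up is the degenerate case $\alpha=\pi$, where $T=U$, the boundary is trivially a circle, and your cancellation of $\cos(\alpha/2)$ must be bypassed.
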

\begin{proof}
  By \eqref{intersection} we have $B_{s_G}(x,r) = B_{s_S}(x,r) \cap B_{s_T}(x,r)$ and by Theorem \ref{halfplanelemma}, $B_{s_G}(x,r)$ is convex as an intersection of two convex domains.

  Let us denote the line through $0$ and $\overline{x}$ by $l$. By Proposition \ref{ellipse} it is clear that $\partial B_{s_G}(x,r)$ is a circle if it does not intersect with $l$ at two distinct points and if $\partial B_{s_G}(x,r)$ is not a circle then it is not smooth. If $\partial B_{s_G}(x,r)$ is a circle and only touches $l$ then $\partial B_{s_G}(x,r) \cap l = \overline{x}$. We obtain that
  \[
    s_G(x,\overline{x}) = s_T(x,\overline{x}) = \frac{\sin \beta}{\sin \frac{\alpha}{2}}
  \]
  and the assertion follows.
\end{proof}

\begin{lemma}\label{alpha>pi}
  Let $G = A_\alpha$, $\alpha \in (\pi,2\pi)$ and $x \in G$. Then $B_s(x,r)$ is convex for $r \in (0,1/2]$ and the radius $1/2$ is sharp for $\beta = \measuredangle (x,0,e_1) < (\alpha-\pi)/2$.

  Moreover, if $\beta > (\alpha+\pi)/2$ then $\partial B_s(x,r)$ is smooth for
  \[
    r < \frac{\sin \left( \frac{\beta}{2}+\frac{\pi-\alpha}{4} \right) }{\sin \left( \frac{\pi+\alpha}{4}-\frac{\beta}{2} \right) }.
  \]
\end{lemma}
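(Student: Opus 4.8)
The plan is to treat the three assertions separately, using throughout that for $\alpha\in(\pi,2\pi)$ the domain $A_\alpha$ is the complement of the closed convex cone $\mathbb{R}^2\setminus A_\alpha$ with vertex $0$, bounded by the two rays $L_1,L_2\subset\partial A_\alpha$ issuing from $0$ at angles $\pm\alpha/2$.

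The convexity for $r\in(0,1/2]$ needs no new work: by \eqref{intersection} we have $B_{s_G}(x,r)=\bigcap_{z\in\partial A_\alpha}B_{s_{\mathbb{R}^2\setminus\{z\}}}(x,r)$, each set in the intersection is convex for $r\le 1/2$ by Theorem \ref{punctspace}, and an intersection of convex sets is convex. This is precisely the reasoning behind Lemma \ref{puncthalfplane}. For the sharpness of $1/2$ under $\beta<(\alpha-\pi)/2$ I would show that on the arc of $\partial B_s(x,r)$ facing the vertex the ball coincides with the punctured–plane ball $B_{s_{\mathbb{R}^2\setminus\{0\}}}(x,r)$, which is nonconvex for $r>1/2$ by Theorem \ref{nonconvextheorem}. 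Concretely, for $y$ on or near the segment $[0,x]$ the supremum defining $s_{A_\alpha}(x,y)$ is attained at the corner $z=0$ rather than at an interior point of $L_1$ or $L_2$: applying the reflection description of the extremal point (as in the proof of Proposition \ref{ellipse}, but to each bounding line separately), the point where the maximal ellipse with foci $x,y$ touches each boundary line falls on the backward extension of the ray, not on the ray itself. The condition $\beta<(\alpha-\pi)/2$ is exactly what forces this for all such $y$, so that $s_{A_\alpha}(x,y)=|x-y|/(|x|+|y|)=s_{\mathbb{R}^2\setminus\{0\}}(x,y)$ there. Hence the corresponding sub-arc of $\partial B_s(x,r)$ is a sub-arc of the nonconvex curve of Theorem \ref{nonconvextheorem}, and $B_s(x,r)$ is nonconvex for every $r>1/2$.

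For the smoothness statement I note that the displayed bound is positive precisely when $\beta>(\alpha-\pi)/2$, which I take to be the intended hypothesis (the right-hand side is $\le 0$ otherwise). In that range $x$ lies nearer one side, say the ray $L_1$ on the line $\partial T$, and for small $r$ every $y$ with $s_{A_\alpha}(x,y)=r$ has its extremal boundary point at an interior point of $L_1$; then $s_{A_\alpha}(x,\cdot)=s_T(x,\cdot)$ and, by Theorem \ref{halfplanelemma}, $\partial B_s(x,r)=\partial B_{s_T}(x,r)$ is a single Euclidean circle, hence smooth. The extremal point on $L_1$ is the intersection of $[x,y']$ with $\partial T$, where $y'$ is the reflection of $y$ in $\partial T$, and it lies on $L_1$ rather than on its backward extension exactly while $y$ stays on one side of the ray $\ell$ through $0$ obtained by reflecting the ray opposite to $x$ in $\partial T$, that is, $\ell$ has direction $\alpha-\pi-\beta$. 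Thus the pure-circle description persists precisely until $\partial B_{s_T}(x,r)$ becomes tangent to $\ell$; for larger $r$ the circle crosses $\ell$, the extremal point jumps to the corner $0$, a second (punctured-plane) arc appears, and $\partial B_s(x,r)$ acquires a corner. Writing the center and radius of $B_{s_T}(x,r)$ from Theorem \ref{halfplanelemma} (with $d(x,\partial T)=|x|\sin(\alpha/2-\beta)$) and imposing that this circle be tangent to $\ell$ yields an equation in $r$ whose solution is the stated quotient $\sin(\beta/2+(\pi-\alpha)/4)/\sin((\pi+\alpha)/4-\beta/2)$.

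The main obstacle is the same in the second and third parts: determining exactly when the extremal boundary point switches between an interior point of a ray and the corner $0$. Both the threshold $\beta=(\alpha-\pi)/2$ for sharpness and the $\sin/\sin$ threshold for smoothness arise from this switch, and the real work is the reflection/ellipse-tangency bookkeeping across the two rays together with the trigonometry that evaluates $s$ at the critical configuration. Everything else — convexity of intersections, monotonicity in the domain, and the Euclidean-ball description of half-plane balls — is soft.
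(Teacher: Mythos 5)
Your handling of the first two assertions matches the paper's (very terse) proof and fills in exactly the right details: convexity for $r\le 1/2$ from \eqref{intersection} and Theorem \ref{punctspace}, and sharpness from the identity $s_G(x,y)=|x-y|/(|x|+|y|)$ near the segment $[0,x]$ when $\beta<(\alpha-\pi)/2$, combined with the local nonconvexity from Theorem \ref{nonconvextheorem}; your reflection bookkeeping is the correct justification of the threshold $(\alpha-\pi)/2$, which the paper only asserts. You are also right to flag the printed hypothesis $\beta>(\alpha+\pi)/2$ as vacuous (one always has $\beta<\alpha/2$) and to read it as $\beta>(\alpha-\pi)/2$.

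The gap is in the smoothness part: your final computational claim is false. Put $\delta=\alpha/2-\beta$ and normalize so that $\partial T$ is the real axis, $L_1$ the positive real axis and $x=|x|e^{i\delta}$. By Theorem \ref{halfplanelemma} the circle $\partial B_{s_T}(x,r)$ has centre $|x|\bigl(\cos\delta,\ \sin\delta\,\tfrac{1+r^2}{1-r^2}\bigr)$ and radius $\tfrac{2|x|r\sin\delta}{1-r^2}$, and your critical ray $\ell$ is the ray at angle $\pi-\delta$. The distance from the centre to the line containing $\ell$ is $\tfrac{2|x|\sin\delta\cos\delta}{1-r^2}$, so tangency to $\ell$ occurs at $r=\cos\delta$. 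The quotient in the statement, however, equals $\sin(\pi/4-\delta/2)/\sin(\pi/4+\delta/2)=(1-\sin\delta)/\cos\delta=\cos\delta/(1+\sin\delta)$, which is strictly smaller than $\cos\delta$. So the equation you propose does not have the stated quotient as its solution; either $\ell$-tangency is not the binding constraint (and you would be asserting a larger threshold without proof), or the derivation must be redone. Indeed there is a structural reason to doubt that $\ell$ is the right critical line: on $\ell$ one has $s_T(x,y)=|x-y|/(|x|+|y|)$ with $s_T\ge |x-y|/(|x|+|y|)$ everywhere, so the two level curves are \emph{tangent} where the boundary crosses $\ell$ and no corner is created there. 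The paper instead runs the mechanism of Lemma \ref{alpha<pi} on the auxiliary angular domain with half-angle $\alpha'/2=\pi/4+\delta/2$ (second side perpendicular to $[0,x]$), whose critical line is the perpendicular to $L_1$ at the vertex; one checks that $\partial B_{s_T}(x,r)$ becomes tangent to that line exactly at $r=(1-\sin\delta)/\cos\delta$, which is the stated quotient. To complete your argument you must identify which transition (loss of the $L_1$-extremal point, activation of $L_2$, or crossing of the locus $s_T=s_U$) actually produces the first corner and verify that it occurs at the stated value; as written, the last step does not check out.
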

\begin{proof}
  The radius of convexity $1/2$ follows from \eqref{intersection} and Theorem \ref{punctspace}. Sharpness of the radius follows from the fact that if $\beta=\measuredangle (x,0,e_1) < (\alpha-\pi)/2$ then $s_G(x,y) = |x-y|/(|x|+|y|)$.

  If $\beta > (\alpha+\pi)/2$ then by the proof of Lemma \ref{alpha<pi}, $\partial B_s(x,r)$ is a circle if $r < \sin \beta' / \sin (\alpha'/2)$, where $b' = \beta/2+(\pi-\alpha)/4$ and $\alpha' = (\pi+\alpha)/2-\beta$.
\end{proof}

By combining the results in the angular domain we obtain the corresponding result in a polygon.

\begin{theorem}\label{polygonthm}
  Let $P \subset \mathbb{R}^2$ be a polygon and $x \in P$. Then $B_s(x,r)$ is convex for all $r \in (0,1/2]$. Moreover, if $P$ is convex then $B_s(x,r)$ is convex for all $r \in (0,1)$.
\end{theorem}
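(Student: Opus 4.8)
The plan is to treat the two claimed radii separately and, in each case, to reduce global convexity of $B_s(x,r)$ to the convexity statements already proved for the punctured plane (Theorem \ref{punctspace}) and for the half-space (Theorem \ref{halfplanelemma}), using the decomposition identity \eqref{intersection} together with the elementary fact that an arbitrary intersection of convex sets is convex.

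For the first assertion (an arbitrary polygon $P$ and $r\in(0,1/2]$) I would argue exactly as in the proof of Lemma \ref{puncthalfplane}. By \eqref{intersection},
\[
  B_{s_P}(x,r) = \bigcap_{z \in \partial P} B_{s_{\mathbb{R}^2 \setminus \{z\}}}(x,r),
\]
and Theorem \ref{punctspace} guarantees that each factor $B_{s_{\mathbb{R}^2 \setminus \{z\}}}(x,r)$ is convex as soon as $r\le 1/2$. Hence $B_{s_P}(x,r)$, being an intersection of convex sets, is convex. This step in fact uses nothing about $P$ being a polygon, so it is immediate once \eqref{intersection} and Theorem \ref{punctspace} are in hand.

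For the second assertion (convex $P$ and $r\in(0,1)$) I would write the convex polygon as a finite intersection of half-planes $P=\bigcap_{i=1}^k H_i$ and establish the set identity
\[
  B_{s_P}(x,r) = \bigcap_{i=1}^k B_{s_{H_i}}(x,r).
\]
The inclusion $\subseteq$ is just monotonicity of $s$ with respect to the domain, since $P\subset H_i$ forces $s_P\ge s_{H_i}$. For the reverse inclusion the key observation is that every $z\in\partial P$ lies on the bounding line $\partial H_i$ of at least one of the half-planes (a boundary point of $P$ fails to be interior to some $H_i$, yet lies in $\overline{H_i}$, hence lies on $\partial H_i$), so that
\[
  \frac{|x-y|}{|z-x|+|z-y|} \le s_{H_i}(x,y) \le \max_{1\le i\le k} s_{H_i}(x,y);
\]
taking the supremum over $z\in\partial P$ gives $s_P(x,y)\le \max_i s_{H_i}(x,y)$, which is precisely the reverse inequality. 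Once the identity holds, Theorem \ref{halfplanelemma} tells us that each $B_{s_{H_i}}(x,r)$ is a Euclidean ball, hence convex, and so their intersection is convex for every $r\in(0,1)$.

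The two convexity-of-intersection steps are routine; the one point that requires care is the reverse inclusion in the convex case, namely charging the maximizing boundary point of $s_P(x,y)$ to one of the half-plane boundaries $\partial H_i$. This is exactly the geometric content hinted at by combining the angular-domain results: near each vertex the polygon looks locally like an angular domain $A_\alpha$, and when $P$ is convex every interior angle satisfies $\alpha<\pi$, so by Lemma \ref{alpha<pi} no vertex obstructs convexity for the whole range $r\in(0,1)$. By contrast, a reflex vertex has $\alpha>\pi$, and Lemma \ref{alpha>pi} then permits only $r\le 1/2$; this is what forces the convexity hypothesis on $P$ in order to reach radii beyond $1/2$.
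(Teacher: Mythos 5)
Your proposal is correct, and for the convex case it takes a genuinely more direct route than the paper. The paper's own proof is a one-line reduction to \eqref{intersection} together with the angular-domain Lemmas \ref{alpha<pi} and \ref{alpha>pi}: the radius $1/2$ for a general polygon comes (via Lemma \ref{alpha>pi}) from Theorem \ref{punctspace}, exactly as in your first step, while the full range $r\in(0,1)$ for convex $P$ is obtained from Lemma \ref{alpha<pi}, which decomposes each convex angular domain at a vertex into two half-planes via Proposition \ref{ellipse}. You bypass the angular domains entirely: for the first assertion you intersect punctured-plane balls over all boundary points (and, as you observe, this uses nothing about $P$ being a polygon), and for the second you prove the identity $B_{s_P}(x,r)=\bigcap_{i} B_{s_{H_i}}(x,r)$ for the supporting half-planes of the convex polygon. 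The key point there --- that every $z\in\partial P$ lies on some $\partial H_i$, so that $s_P(x,y)=\max_i s_{H_i}(x,y)$ by combining this with domain monotonicity --- is precisely the step the paper leaves implicit, and writing it out is the main added value of your argument; each $B_{s_{H_i}}(x,r)$ is a Euclidean ball by Theorem \ref{halfplanelemma}, so the intersection is convex for all $r\in(0,1)$. Your closing remarks about vertices and reflex angles are only motivation (they reconstruct the paper's actual route) and are not needed once the half-plane identity is in place.
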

\begin{proof}
  The assertion follows from \eqref{intersection} and Lemmas \ref{alpha<pi} and \ref{alpha>pi}.
\end{proof}

\begin{proof}[Proof of Theorem \ref{mainthm1}]
  The assertion follows from Theorems \ref{punctspace}, \ref{halfplanelemma}, \ref{HUPthm} and \ref{polygonthm}.
\end{proof}

\begin{openproblem}
  Let $G \subsetneq \mathbb{R}^n$ be a convex domain and $x \in G$. Is $B_s(x,r)$ convex for all $r \in (0,1)$?
\end{openproblem}

%==================================================================================================================
%==================================================================================================================
%==================================================================================================================
\section{Inclusion relations of balls in general domains}
%==================================================================================================================
%==================================================================================================================
%==================================================================================================================

In this section and the following section we will consider the inclusion relations between metric balls in general domains and also some special domains.

\begin{theorem}\label{thm2-2}Suppose that $G\subset \mathbb{R}^n$ is a domain. For each $x\in G$ and $r\in (0,1)$, we have $$B^n \left( x, \frac{2r}{1+r}d_G(x) \right) \subset B_s(x, r)\subset B^n \left( x, \frac{2r}{1-r}d_G(x) \right).$$

\end{theorem}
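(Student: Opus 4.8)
The plan is to reduce both inclusions to a pair of elementary two-sided estimates for $s_G(x,y)$ in terms of $t = |x-y|$ and $d = d_G(x)$. Concretely, I would first establish
\[
  \frac{t}{2d+t} \le s_G(x,y) \le \frac{t}{2d-t},
\]
where the right-hand estimate is needed only in the range $t < 2d$, which is exactly the regime relevant to the first inclusion. Both inclusions then drop out because the functions $t \mapsto t/(2d+t)$ and $t \mapsto t/(2d-t)$ are strictly increasing and take the value $r$ precisely at $t = \frac{2r}{1-r}d$ and $t = \frac{2r}{1+r}d$, respectively.

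For the lower estimate I would pick a nearest boundary point: since $G \subsetneq \mathbb{R}^n$, the set $\partial G$ is nonempty and closed, so the infimum defining $d_G(x)$ is attained at some $z_0 \in \partial G$ with $|z_0-x| = d$. Restricting the supremum in \eqref{definiton} to this single point and using $|z_0-y| \le |z_0-x| + |x-y| = d+t$ gives
\[
  s_G(x,y) \ge \frac{|x-y|}{|z_0-x|+|z_0-y|} \ge \frac{t}{2d+t}.
\]
For the upper estimate I would instead bound the denominator uniformly: for every $z \in \partial G$ the reverse triangle inequality gives $|z-y| \ge |z-x| - |x-y|$, so $|z-x|+|z-y| \ge 2|z-x| - t \ge 2d - t$, and taking the supremum yields $s_G(x,y) \le t/(2d-t)$.

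It then remains to convert these bounds into the stated inclusions. If $|x-y| < \frac{2r}{1+r}d$, then in particular $t < d < 2d$ (since $\frac{2r}{1+r} < 1$ for $r \in (0,1)$), so the upper estimate applies and monotonicity gives $s_G(x,y) \le t/(2d-t) < r$; hence $y \in B_s(x,r)$, which is the left inclusion. Conversely, if $y \in B_s(x,r)$, the lower estimate gives $t/(2d+t) \le s_G(x,y) < r$, and solving $t/(2d+t) < r$ for $t$ yields $|x-y| = t < \frac{2r}{1-r}d$, the right inclusion.

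I do not expect a serious obstacle, as the argument is entirely elementary once the two bounds are in hand. The only points demanding a little care are the attainment of $d_G(x)$ at an actual boundary point (used for the lower bound, and guaranteed by closedness of $\partial G$) and the verification that evaluating $t/(2d \mp t)$ at $t = \frac{2r}{1\pm r}d$ reproduces exactly the value $r$, so that the stated radii — and not merely comparable ones — come out. The crux is therefore checking that these crude-looking estimates are in fact tight enough to hit the constants $\frac{2r}{1\pm r}$ on the nose.
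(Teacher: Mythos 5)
Your proof is correct and follows essentially the same route as the paper's: both reduce the two inclusions to triangle-inequality estimates comparing $s_G(x,y)$ with $|x-y|/d_G(x)$, using a nearest boundary point for the lower bound on $s_G$ and the reverse triangle inequality for the upper bound. Your packaging as the single sandwich $\frac{t}{2d+t} \le s_G(x,y) \le \frac{t}{2d-t}$ is a slightly cleaner presentation that avoids the paper's case split on whether $d_G(y) \ge d_G(x)$, but the underlying estimates are identical.
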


\begin{proof}
We first show that $B^n(x, \frac{2r}{1+r}d_G(x))\subset B_s(x, r).$ For each $y\in B^n(x, \frac{2r}{1+r}d_G(x)),$ we have $$\frac{|x-y|}{\min_{z\in\partial G}\{|x-z|+|y-z|\}}\leq \frac{\frac{2r}{1+r}d_G(x)}{d_G(x)+d_G(y)}\leq \frac{r}{1+r}< r$$ when $d_G(y)\geq d_G(x)$, and
$$\frac{|x-y|}{\min_{z\in\partial G}\{|x-z|+|y-z|\}}\leq \frac{\frac{2r}{1+r}d_G(x)}{d_G(x)+d_G(x)-|x-y|}\leq  r$$ when $d_G(y)\leq d_G(x).$
Because $y$ is arbitrary,  we get $B(x, \frac{2r}{1+r}|x|)\subset B_s(x, r).$

For the second inclusion $B_s(x, r)\subset B^n(x, \frac{2r}{1-r}d_G(x)),$ let $y\in B_s(x, r)$ and assume that $d_G(x) \le d_G(y)$. Let $z\in \partial G$ be such that $|x-z|=d_D(x)$. Then $d_G(y)\leq|y-z|\leq \frac{1+r}{1-r}|x-z|=\frac{1+r}{1-r}d_G(x),$ which implies that $$|x-y|\leq r (|x-z|+|y-z|)\leq \frac{2r}{1-r}d_G(x).$$

\end{proof}

\begin{theorem}\label{thm2-1} Suppose that $G\subset \mathbb{R}^n$ is a domain. For each $x\in G$ and $r\in (0,1)$, we have $$B_j(x, \log(1+2r))\subset B_s(x, r)$$ and the  inclusion is sharp if there exists some points $w$ in $\partial B_s(x,r) $ such that $d_G(x)=d_G(w)$ and $$\partial G\cap S^{n-1}(w,d_G(w))\cap S^{n-1}(x,d_G(x))\neq \emptyset.$$ Moreover, for each $x\in G$ and $r\in (0,\frac{1}{3})$, we have  $$B_s(x, r)\subset B_j \left( x, \log(1+\frac{2r}{1-3r}) \right).$$

\end{theorem}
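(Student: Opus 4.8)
The plan is to deduce all three assertions directly from the definitions, using only elementary distance estimates. The key is the reformulation $s_G(x,y) = |x-y|/\mu(x,y)$, where $\mu(x,y) = \min_{z \in \partial G}(|z-x|+|z-y|)$, so that both inclusions amount to two-sided control of $\mu$; throughout write $m = \min\{d_G(x),d_G(y)\}$. Every $z \in \partial G$ satisfies $|z-x| \ge d_G(x)$ and $|z-y| \ge d_G(y)$, so $\mu(x,y) \ge d_G(x)+d_G(y) \ge 2m$ and hence $s_G(x,y) \le |x-y|/(2m)$. If $y \in B_j(x,\log(1+2r))$, then $|x-y| < 2rm$, and the two estimates combine to give $s_G(x,y) < r$. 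This proves $B_j(x,\log(1+2r)) \subset B_s(x,r)$.

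\textbf{Sharpness.} Under the stated hypothesis choose $w \in \partial B_s(x,r)$ with $d_G(w) = d_G(x)$ and a point $z^* \in \partial G$ lying on both $S^{n-1}(x,d_G(x))$ and $S^{n-1}(w,d_G(w))$, so that $|z^*-x| = |z^*-w| = d_G(x)$. Then the lower bound $\mu(x,w) \ge 2d_G(x)$ is attained at $z^*$, forcing $s_G(x,w) = |x-w|/(2d_G(x))$; since $s_G(x,w) = r$ this gives $|x-w| = 2r\,d_G(x)$, whence $j_G(x,w) = \log(1 + |x-w|/d_G(x)) = \log(1+2r)$. Thus $w$ lies on $\partial B_j(x,\log(1+2r))$ as well as on $\partial B_s(x,r)$, so the $j$-radius $\log(1+2r)$ cannot be enlarged while preserving the inclusion.

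\textbf{Reverse inclusion.} Here I would prove a matching lower bound for $s_G$. Choosing $z_0 \in \partial G$ nearest to whichever of $x,y$ realizes $m$, the triangle inequality gives $\mu(x,y) \le 2m + |x-y|$ and therefore $s_G(x,y) \ge |x-y|/(2m+|x-y|)$. For $y \in B_s(x,r)$, solving $|x-y|/(2m+|x-y|) < r$ yields $|x-y|/m < 2r/(1-r)$, hence $j_G(x,y) < \log(1+2r/(1-r)) \le \log(1+2r/(1-3r))$, which is the assertion. This route in fact gives the sharper constant $2r/(1-r)$ for all $r \in (0,1)$; alternatively one obtains exactly $2r/(1-3r)$ by feeding the bound $|x-y| < \frac{2r}{1-r}d_G(x)$ of Theorem \ref{thm2-2} into $j_G$ and treating the case $d_G(y) < d_G(x)$ via $d_G(y) \ge d_G(x)-|x-y|$ together with the increasing map $u \mapsto u/(1-u)$, where the restriction $r < 1/3$ is precisely what guarantees $\frac{2r}{1-r} < 1$ and hence $d_G(x)-|x-y|>0$.

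\textbf{Main obstacle.} The two inclusion estimates are routine triangle-inequality bookkeeping. The delicate point is the sharpness claim: one must verify that the condition $\partial G \cap S^{n-1}(w,d_G(w)) \cap S^{n-1}(x,d_G(x)) \neq \emptyset$, together with $d_G(w)=d_G(x)$, is exactly what makes a single boundary point $z^*$ realize $\mu(x,w) = 2d_G(x)$, so that the $s$-bound $s_G(x,w) \le |x-w|/(2d_G(x))$ and the $j$-bound become simultaneously tight at the same $w$.
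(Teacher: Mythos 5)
Your proof is correct. The first inclusion and the sharpness discussion coincide with the paper's own argument: both rest on the bound $\min_{z\in\partial G}\left(|z-x|+|z-y|\right) \ge d_G(x)+d_G(y) \ge 2\min\{d_G(x),d_G(y)\}$, and on the observation that the hypothesis on $w$ forces this bound to be an equality at the common boundary point $z^*$, so that $|x-w|=2r\,d_G(x)$ and $j_G(x,w)=\log(1+2r)$. For the reverse inclusion, however, you take a genuinely different and in fact better route. The paper deduces it from Theorem \ref{thm2-2} combined with the cited inclusion $B^n(x,\rho\, d_G(x))\subset B_k\left(x,\log\frac{1}{1-\rho}\right)$ and the inequality $j_G\le k_G$, applied with $\rho=\frac{2r}{1-r}$; this detour is exactly why the paper needs $r<\frac13$ and only reaches the constant $\frac{2r}{1-3r}$. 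Your direct estimate $\min_{z\in\partial G}\left(|z-x|+|z-y|\right)\le 2m+|x-y|$, obtained from a nearest boundary point to whichever of $x,y$ realizes $m$ (such a point exists because $\partial G$ is closed and nonempty), gives $s_G(x,y)\ge |x-y|/(2m+|x-y|)$ and hence $B_s(x,r)\subset B_j\left(x,\log\left(1+\frac{2r}{1-r}\right)\right)$ for every $r\in(0,1)$. This is strictly stronger than the stated conclusion, since $\frac{2r}{1-r}\le\frac{2r}{1-3r}$ for $r<\frac13$, it removes the restriction $r<\frac13$ entirely, and by the sharpness example in Theorem \ref{thm3-5} the constant $\frac{2r}{1-r}$ is already attained in $\R^n\setminus\{0\}$, so your bound is best possible among general domains. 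The only detail worth writing out explicitly is the existence of the minimizing boundary point (or an $\varepsilon$-approximate version of the argument), which is routine.
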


\begin{proof}  We first prove the first inclusion.
For given $x\in G$ and $y\in B_j(x,\log(1+2r))$, we have $|x-y|\leq 2r\min\{d_G(x),d_G(y)\}$. Then
$$\frac{|x-y|}{\min_{z\in \partial G}\{|x-z|+|y-z|\}}\leq \frac{|x-y|}{2d_D(x)}\leq r.$$

For the sharpness: if there exists a point $w$ in $\partial B_s(x,r) $ such that $d_G(x)=d_G(w)$ and $$\partial G\cap S(w,d_G(w))\cap S(x,d_G(x))\neq \emptyset,$$then let $z\in \partial G\cap S(w,d_G(w))\cap S(x,d_G(x))\neq \emptyset$. Hence $$|x-w|\leq r(|x-z|+|w-z|)=2r d_G(x),$$ which implies $j_G(x,w)=\log(1+2r).$

Next, we consider the second inclusion. By \cite[(3.9)]{Vu2}, we have $$B^n(x, r d_G(x))\subset B_k(x, \log\frac{1}{1-r}),$$ which together with the fact "$j_G(x,y)\leq k_G(x,y)$" show that $$B^n(x, r d_G(x))\subset B_k(x, \log\frac{1}{1-r})\subset B_j(x, \log\frac{1}{1-r}).$$  Hence, the second inclusion follows from Theorem \ref{thm2-2}.

\end{proof}

By Theorems \ref{thm2-2} and \ref{thm2-1} and \cite[(3.9)]{Vu2}, we get the following corollary.

\begin{corollary}
Suppose that $G\subset \mathbb{R}^n$ is a domain. For each $x\in G$ and $r\in (0,1)$, we have $$B_k(x, \log(1+2r))\subset B_s(x, r).$$ And for  $x\in G$ and $r\in (0,\frac{1}{3})$,  we have $$B_s(x, r)\subset B_k \left( x, \log(1+\frac{2r}{1-3r}) \right) .$$
\end{corollary}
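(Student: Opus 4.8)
The plan is to derive both inclusions directly from the three quoted results, so the argument is essentially bookkeeping of the comparison functions with no genuine analytic difficulty.

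For the first inclusion I would start from the elementary and well-known comparison $j_G(x,y) \le k_G(x,y)$, valid in every domain. This gives the ball inclusion $B_k(x,t) \subset B_j(x,t)$ for every $t>0$: indeed, if $k_G(x,y) < t$ then $j_G(x,y) \le k_G(x,y) < t$. Taking $t = \log(1+2r)$ and chaining with the first inclusion of Theorem \ref{thm2-1} yields
\[
  B_k(x,\log(1+2r)) \subset B_j(x,\log(1+2r)) \subset B_s(x,r),
\]
which is exactly the first claim.

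For the second inclusion I would restrict to $r \in (0,1/3)$ and combine the Euclidean upper bound of Theorem \ref{thm2-2} with \cite[(3.9)]{Vu2}. The upper bound gives $B_s(x,r) \subset B^n(x, \frac{2r}{1-r}\,d_G(x))$, and here the restriction $r<1/3$ is precisely what guarantees that the relative radius $\rho := \frac{2r}{1-r}$ satisfies $\rho < 1$, so that \cite[(3.9)]{Vu2}, namely $B^n(x, \rho\, d_G(x)) \subset B_k(x, \log\frac{1}{1-\rho})$, applies. It then remains only to identify the resulting radius: since $1 - \frac{2r}{1-r} = \frac{1-3r}{1-r}$ and $1 + \frac{2r}{1-3r} = \frac{1-r}{1-3r}$, one has $\log\frac{1}{1-\rho} = \log\frac{1-r}{1-3r} = \log\bigl(1+\frac{2r}{1-3r}\bigr)$, whence
\[
  B_s(x,r) \subset B^n\left(x, \frac{2r}{1-r}\,d_G(x)\right) \subset B_k\left(x, \log\Bigl(1+\frac{2r}{1-3r}\Bigr)\right).
\]

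The only step requiring any care is tracking the threshold $r = 1/3$: it is not an artifact of the estimates but the exact point at which the enclosing Euclidean radius $\frac{2r}{1-r}\,d_G(x)$ reaches $d_G(x)$, beyond which \cite[(3.9)]{Vu2} can no longer furnish a finite quasihyperbolic radius. Apart from verifying this admissibility condition, the proof is a purely formal concatenation of the three quoted inclusions together with the elementary identity relating the two expressions for the logarithmic radius, so I expect no real obstacle.
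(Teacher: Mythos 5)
Your proposal is correct and follows exactly the route the paper intends: the first inclusion by chaining $j_G\le k_G$ with the first inclusion of Theorem \ref{thm2-1}, and the second by combining the Euclidean upper bound of Theorem \ref{thm2-2} with \cite[(3.9)]{Vu2} and the identity $\log\frac{1-r}{1-3r}=\log\bigl(1+\frac{2r}{1-3r}\bigr)$, which is precisely the chain already used in the proof of Theorem \ref{thm2-1}. Your remark that $r<1/3$ is exactly the admissibility threshold for applying \cite[(3.9)]{Vu2} is also accurate.
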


\begin{figure}[!ht]
\centering
\includegraphics[width=0.4\textwidth,height=0.4\textwidth]{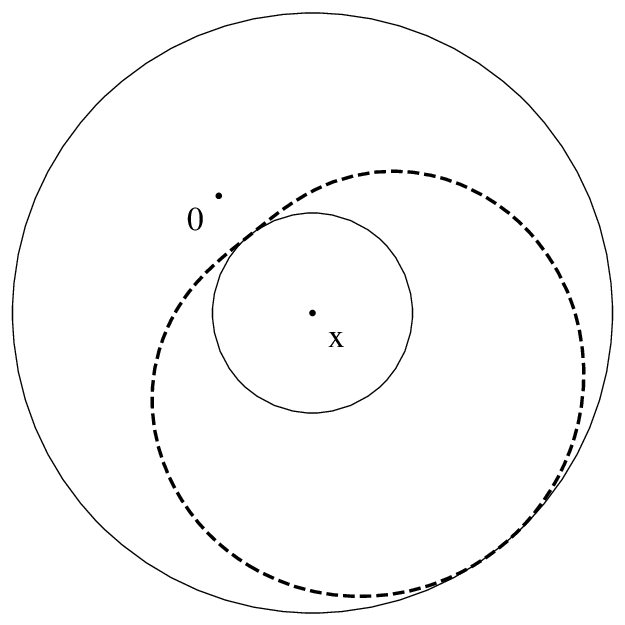}
\caption{This picture is in $\mathbb{R}^2\setminus\{0\}$ and the boundary of the $s$-metric disk in Theorem \ref{thm3-1} visualized as the dashed curve. Here $x=0.4-0.5I$ and $r=0.5$. \label{fig1}}
\end{figure}

%==================================================================================================================
%==================================================================================================================
%==================================================================================================================

\section{Inclusion relations of balls in some special domains}
%==================================================================================================================
%==================================================================================================================
%==================================================================================================================

First we consider the punctured spaces and get what follows.
\begin{theorem}\label{thm3-1}Let $G=\mathbb{R}^n\setminus \{0\}$. For each $x\in G$ and $r\in (0,1)$, we have $$B \left( x, \frac{2r}{1+r}|x| \right) \subset B_s(x, r)\subset B \left( x, \frac{2r}{1-r}|x| \right) .$$ Moreover, the inclusions are sharp (see Figure\eqref{fig1}).

\end{theorem}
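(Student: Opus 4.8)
The plan is to obtain the two inclusions as an immediate specialization of Theorem~\ref{thm2-2} and then to devote the real work to the sharpness claim, which I would settle by exhibiting extremal points lying on the line through $0$ and $x$.

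First I would observe that for $G=\Rn\setminus\{0\}$ the boundary is the single point $0$, so $d_G(x)=|x|$ and the supremum in \eqref{definiton} reduces to the explicit formula $s_G(x,y)=|x-y|/(|x|+|y|)$ already recorded at the beginning of Section~3. Inserting $d_G(x)=|x|$ into the conclusion of Theorem~\ref{thm2-2} gives exactly
\[
  B\left(x,\tfrac{2r}{1+r}|x|\right)\subset B_s(x,r)\subset B\left(x,\tfrac{2r}{1-r}|x|\right),
\]
so the inclusions require no new argument.

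For sharpness of the inner inclusion I would test the point $y_-=\tfrac{1-r}{1+r}\,x$, which lies on the segment $[0,x]$. A direct computation with the explicit formula gives $|x-y_-|=\tfrac{2r}{1+r}|x|$ and $|x|+|y_-|=\tfrac{2}{1+r}|x|$, hence $s_G(x,y_-)=r$; thus $y_-$ lies simultaneously on $\partial B_s(x,r)$ and on the bounding sphere $S^{n-1}(x,\tfrac{2r}{1+r}|x|)$, so the inner radius cannot be enlarged. For the outer inclusion I would instead take $y_+=\tfrac{1+r}{1-r}\,x$, the point on the ray from $0$ through $x$ beyond $x$; the same formula yields $|x-y_+|=\tfrac{2r}{1-r}|x|$ and $|x|+|y_+|=\tfrac{2}{1-r}|x|$, so again $s_G(x,y_+)=r$, placing $y_+$ on both $\partial B_s(x,r)$ and $S^{n-1}(x,\tfrac{2r}{1-r}|x|)$ and showing the outer radius cannot be shrunk.

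Since everything reduces to Theorem~\ref{thm2-2} together with two one-variable checks, I do not expect a genuine obstacle here; the only point requiring a moment of thought is the correct guess for the extremal configuration, namely that both extremizers are collinear with the origin and $x$---one pulled toward $0$ and one pushed away from it---which is exactly what one expects from the rotational symmetry of $s_G$ about the origin.
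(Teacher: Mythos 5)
Your proposal is correct and follows the paper's own proof essentially verbatim: the inclusions are quoted from Theorem~\ref{thm2-2} with $d_G(x)=|x|$, and the sharpness witnesses $y_-=\frac{1-r}{1+r}x$ and $y_+=\frac{1+r}{1-r}x$ are exactly the points $(1-\frac{2r}{1+r})x$ and $(1+\frac{2r}{1-r})x$ used in the paper. Nothing further is needed.
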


\begin{proof} The inclusions follow from Theorem \ref{thm2-2}.
Sharpness of the first inclusion: choose $y=(1-\frac{2r}{1+r})x$. Obviously, $|x-y|=\frac{2r}{1+r}|x|$ and $\frac{|x-y|}{|x|+|y|}=r.$

Sharpness of the second inclusion: choose $y=(1+\frac{2r}{1-r})x$. Obviously, $|x-y|=\frac{2r}{1-r}|x|$ and $\frac{|x-y|}{|x|+|y|}=r.$

\end{proof}

\begin{theorem}\label{thm3-5}Let $G=\mathbb{R}^n\setminus \{0\}$. For each $x\in G$ and $r\in (0,1)$, we have $$B_j(x, m)\subset B_s(x, r)\subset B_j(x, M),$$ where $m=\log(1+2r)$, and $M=\log(1+\frac{2r}{1-r})$.  Moreover, the inclusions are sharp (see Figure \eqref{fig2}).

\end{theorem}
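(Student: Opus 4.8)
The plan is to dispose of the first inclusion by citation and to treat the second inclusion and all sharpness claims by direct computation with the explicit punctured-space formula $s_G(x,y) = |x-y|/(|x|+|y|)$. The first inclusion $B_j(x,m) \subset B_s(x,r)$ with $m = \log(1+2r)$ is exactly the special case of the general-domain inclusion $B_j(x,\log(1+2r)) \subset B_s(x,r)$ proved in Theorem \ref{thm2-1}, so it needs no separate argument here.

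For the second inclusion I would argue directly. Fix $y \in B_s(x,r)$, so $|x-y| < r(|x|+|y|)$. Applying $\bigl||x|-|y|\bigr| \le |x-y|$ gives $\bigl||x|-|y|\bigr| < r(|x|+|y|)$, and rearranging in each sign yields the two-sided bound $\frac{1-r}{1+r}|x| < |y| < \frac{1+r}{1-r}|x|$. I then split according to whether $|y| \ge |x|$ or $|y| < |x|$. If $|y| \ge |x|$, then $\min\{|x|,|y|\} = |x|$ and $\frac{|x-y|}{|x|} < r\bigl(1 + \frac{|y|}{|x|}\bigr) < r\bigl(1 + \frac{1+r}{1-r}\bigr) = \frac{2r}{1-r}$; if $|y| < |x|$, then $\min\{|x|,|y|\} = |y|$ and $\frac{|x-y|}{|y|} < r\bigl(\frac{|x|}{|y|} + 1\bigr) < r\bigl(\frac{1+r}{1-r} + 1\bigr) = \frac{2r}{1-r}$. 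In both cases taking logarithms gives $j_G(x,y) < \log\bigl(1 + \frac{2r}{1-r}\bigr) = M$, i.e. $y \in B_j(x,M)$.

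For sharpness I would exhibit explicit extremal families, approaching the boundaries from inside the open balls. For the outer inclusion, take $y$ on the ray from $0$ through $x$ with $|y| = \frac{1+r'}{1-r'}|x|$ and $r' \uparrow r$; then $s_G(x,y) = r' < r$ while $j_G(x,y) = \log\bigl(1 + \frac{2r'}{1-r'}\bigr) \uparrow M$, so $M$ cannot be decreased. For the inner inclusion, take $y$ on the sphere $|y| = |x|$ with $|x-y| = 2r'|x|$ and $r' \uparrow r$; then $j_G(x,y) = \log(1+2r') < m$ while $s_G(x,y) = r' \uparrow r$, so $m$ cannot be increased. Both families are realizable for every $r \in (0,1)$, the second because a chord of length $2r'|x| < 2|x|$ always exists on the sphere of radius $|x|$.

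The only genuine content is the second inclusion, and the main (modest) obstacle there is bookkeeping: one must correctly identify which of $|x|,|y|$ realizes $\min\{|x|,|y|\}$ and feed in the matching one-sided bound on $|y|/|x|$. Once the two-sided estimate on $|y|$ is established, both cases collapse to the same clean bound $\frac{2r}{1-r}$, and the sharpness constructions show that this bound, and likewise the value $m$ in the first inclusion, cannot be improved.
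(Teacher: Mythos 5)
Your proposal is correct and follows essentially the same route as the paper: the first inclusion is delegated to Theorem \ref{thm2-1}, the second is obtained from $|x-y|<r(|x|+|y|)$ together with the bound $|y|<\frac{1+r}{1-r}|x|$ (and its reciprocal), and sharpness is witnessed by a point on the sphere $|y|=|x|$ and a point on the ray through $x$. The only cosmetic differences are that the paper settles the case $|y|<|x|$ by invoking invariance of $s_G$ and $j_G$ under inversion about the origin where you compute directly, and that you use families approaching the extremal configuration where the paper takes the boundary points themselves.
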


\begin{proof}The first inclusion follows from Theorem \ref{thm2-1}.

Sharpness: Choose $y\in \partial B_s(x,r)$ with $|y|=|x|$. Then $|x-y|=2r|x|$, whence $$\log \left( 1+\frac{|x-y|}{|x|} \right) =\log(1+2r).$$
\begin{figure}[!ht]
\includegraphics[width=0.4\textwidth,height=0.4\textwidth]{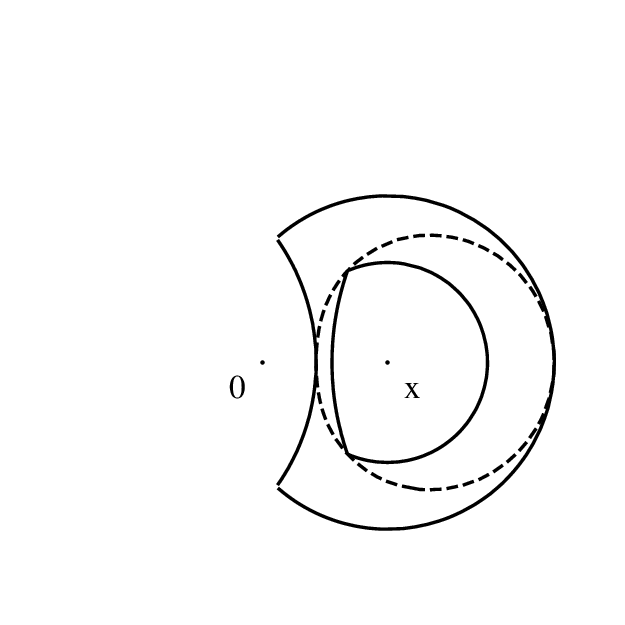}
\caption{This picture is in $\mathbb{R}^2\setminus\{0\}$ and the boundary of the $s$-metric disk in Theorem \ref{thm3-5} visualized as the dashed curve. Here $x=0.5$ and $r=0.4$. \label{fig2}}
\end{figure}
For the proof of the second inclusion, let $y\in B_s(x,r)$ with $|x|\leq |y|$.  Then $$|x-y|\leq r (|x|+|y|)$$ and $$|y|\leq \frac{1+r}{1-r}|x|.$$ Hence $$\log \left( 1+\frac{|x-y|}{|x|} \right) \leq \log \left( 1+\frac{2r}{1-r} \right) .$$

The case $y\in B_s(x,r)$ with $|x|\geq |y|$ follows from the fact that $s_G(x,y)$ and $j_G(x,y)$ are invariant under inversion about origin.

Sharpness: Choose $y=\frac{1+r}{1-r}x$. Then on one hand $\frac{|x-y|}{|x|+|y|}=r$, which implies $y\in \partial B_s(x,r)$.  On the other hand, $\log(1+\frac{|x-y|}{|x|})=\log(1+\frac{2r}{1-r})$ implies $y\in \partial B_j(x,r)$ which gives the sharpness.

\end{proof}

\begin{proof}[Proof of Theorem \ref{mainthm2}]
  The assertion follows from Theorems \ref{thm2-2}, \ref{thm2-1}, \ref{thm3-1} and \ref{thm3-5}.
\end{proof}

By Theorem \ref{thm3-1} and \cite[Theorem 3.3]{KV}, the following holds.

\begin{corollary}
Let $G=\mathbb{R}^n\setminus \{0\}$. (1) For each $x\in G$ and $r\in (0,1)$, we have $B_k(x, \log(1+2r))\subset B_s(x, r).$

(2)For each $x\in G$ and $r\in (0,\frac{1}{2})$, we have
$$B_s(x, r)\subset B_k(x, 2 \arcsin\frac{r}{1-r}).$$
\end{corollary}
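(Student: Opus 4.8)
The plan is to exploit a symmetry that both families of balls share in the punctured space, using Theorem \ref{thm3-1} to sandwich $B_s(x,r)$ between concentric Euclidean balls and \cite[Theorem 3.3]{KV} to pass between Euclidean balls and quasihyperbolic balls. The unifying observation is that, for $G=\mathbb{R}^n\setminus\{0\}$, both $y\mapsto s_G(x,y)$ and $y\mapsto k_G(x,y)$ are invariant under the inversion $\iota(y)=|x|^2y/|y|^2$ in the sphere $S^{n-1}(0,|x|)$. For $s_G$ this is immediate from $s_G(x,y)=|x-y|/(|x|+|y|)$ together with the identities $|x-\iota(y)|=(|x|/|y|)\,|x-y|$ and $|x|+|\iota(y)|=(|x|/|y|)(|x|+|y|)$; for $k_G$ it follows from the Martin--Osgood formula, since $\iota$ fixes the direction $y/|y|$, hence the angle $\measuredangle(x,0,y)$, and sends $\log(|x|/|y|)$ to its negative. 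Thus $B_s(x,r)$ and $B_k(x,M)$ are $\iota$-invariant, and since $\iota$ interchanges $\{|y|<|x|\}$ and $\{|y|>|x|\}$ while preserving $s_G(x,\cdot)$ and $k_G(x,\cdot)$, in every supremum it suffices to consider points with $|y|\ge|x|$.

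For part (1) I would use that, by \cite[Theorem 3.3]{KV}, $B_k(x,M)\subset B^n(x,(e^M-1)|x|)$; taking $M=\log(1+2r)$ gives $e^M-1=2r$, so every $y\in B_k(x,\log(1+2r))$ satisfies $|x-y|<2r|x|$. By the symmetry reduction it is enough to bound $s_G(x,y)$ for such $y$ with $|y|\ge|x|$, and there
\[
  s_G(x,y)=\frac{|x-y|}{|x|+|y|}<\frac{2r|x|}{|x|+|y|}\le\frac{2r|x|}{2|x|}=r,
\]
which yields $B_k(x,\log(1+2r))\subset B_s(x,r)$ for all $r\in(0,1)$.

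For part (2) I would start from the outer inclusion of Theorem \ref{thm3-1}, $B_s(x,r)\subset B^n(x,\tfrac{2r}{1-r}|x|)$, and set $t=\tfrac{2r}{1-r}$, noting $t<2$ precisely when $r<1/2$. By the symmetry reduction, $\sup_{y\in B_s(x,r)}k_G(x,y)$ is attained among points with $|y|\ge|x|$, all of which lie in the outer half of $B^n(x,t|x|)$. The point where \cite[Theorem 3.3]{KV} enters is the claim that on this outer half the quasihyperbolic distance from $x$ is maximised on the sphere $|y|=|x|$: there the radial term of the Martin--Osgood formula vanishes and $k_G(x,y)$ reduces to $\measuredangle(x,0,y)$, whose largest value attained by $B^n(x,t|x|)$ on $\{|y|=|x|\}$ is $2\arcsin(t/2)=2\arcsin\tfrac{r}{1-r}$. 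This gives $B_s(x,r)\subset B_k(x,2\arcsin\tfrac{r}{1-r})$.

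The main obstacle is this last monotonicity statement: one must verify that along the arc $|y-x|=t|x|$, $|y|\ge|x|$, the quantity $\sqrt{\measuredangle(x,0,y)^2+\log^2(|x|/|y|)}$ increases as $|y|$ decreases to $|x|$, so that the gain in angle dominates the loss in the radial log-term, and that the restriction $r<1/2$ (equivalently $t<2$) is exactly what guarantees the sphere $|y|=|x|$ is reached inside $B^n(x,t|x|)$. The reduction to $\{|y|\ge|x|\}$ is essential here: the \emph{inward} radial point of $B^n(x,t|x|)$ has strictly larger quasihyperbolic distance from $x$, so without the inversion symmetry the concentric Euclidean bound alone would yield only the weaker radius $\log\tfrac{1-r}{1-3r}$.
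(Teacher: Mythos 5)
Your part (1) is correct and essentially self-contained: the containment $B_k(x,M)\subset B^n(x,(e^M-1)|x|)$ follows from $k_G\ge j_G\ge \log(1+|x-y|/d_G(x))$, and the inversion $\iota(y)=|x|^2y/|y|^2$ does preserve both $s_G(x,\cdot)$ and $k_G(x,\cdot)$ in the punctured space, so the reduction to $|y|\ge|x|$ is legitimate. (The paper obtains part (1) more directly from the chain $B_k(x,\log(1+2r))\subset B_j(x,\log(1+2r))\subset B_s(x,r)$ already available from Section 6, with no inversion needed, but your route works.)

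Part (2), however, has a genuine gap at precisely the step you yourself call ``the main obstacle.'' Your plan is $B_s(x,r)\subset B^n(x,t|x|)$ with $t=\tfrac{2r}{1-r}$, reduction by inversion to the outer half $\{|y|\ge|x|\}$, and then the assertion that on that half $k_G(x,\cdot)$ is maximised on $\{|y|=|x|\}$, where it equals the angle and is at most $2\arcsin(t/2)$. That assertion is true, but it is the entire analytic content of the statement and you do not prove it: one must verify that along the arc $|y-x|=t|x|$, $|y|\ge|x|$, the gain in $\measuredangle(x,0,y)$ as $|y|$ decreases to $|x|$ dominates the loss of the term $\log^2(|x|/|y|)$. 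Nor can this be outsourced to a full-ball inclusion of the shape $B^n(x,t|x|)\subset B_k(x,2\arcsin(t/2))$, because that statement is false: the inward radial boundary point already gives $k=\log\tfrac{1}{1-t}>2\arcsin(t/2)$, and for $t\ge 1$ (i.e.\ $r\ge 1/3$) the Euclidean ball meets every neighbourhood of the puncture, where $k_G(x,\cdot)$ is unbounded. So a citation of \cite[Theorem 3.3]{KV} cannot literally deliver the half-ball claim you need; you must either check that the cited theorem is stated in a form applying to the region you actually use, or prove the monotonicity of $\rho\mapsto\arccos^2\bigl(\tfrac{|x|^2+\rho^2-t^2|x|^2}{2\rho|x|}\bigr)+\log^2(\rho/|x|)$ yourself. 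A safer intermediate object is the $j$-ball of Theorem \ref{thm3-5}: $B_s(x,r)\subset B_j(x,\log(1+\tfrac{2r}{1-r}))$, and the $j$-ball, unlike $B^n(x,t|x|)$, is confined to the annulus $\tfrac{1-r}{1+r}|x|<|y|<\tfrac{1+r}{1-r}|x|$, which tames the radial term for all $r<1/2$ and is the natural domain for the $B_j\subset B_k$ comparison that the cited theorem is meant to supply.
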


\begin{conjecture}\label{con3-1}Let $G=\mathbb{R}^n\setminus \{0\}$. For each $x\in G$ and $r\in (0,1)$, we have $$B_k(x, m)\subset B_s(x, r)\subset B_k(x, M),$$ where $m=2 \arcsin r$, and $M=\log(1+\frac{2r}{1-r})$.  Moreover, the inclusions are sharp (see Figure \ref{fig3}).
\end{conjecture}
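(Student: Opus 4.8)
The plan is to reduce the statement to a one–variable monotonicity question by exploiting the symmetries of $G=\R^n\setminus\{0\}$ together with the Martin--Osgood formula. Both $s_G$ and $k_G$ are invariant under dilations $z\mapsto\lambda z$ and rotations about the origin, and each depends only on $|x|$, $|y|$ and $\alpha=\measuredangle(x,0,y)$; so I may assume $n=2$, $|x|=1$, and record a point $y$ by the pair $(\alpha,u)$ with $u=\log|y|$. In these coordinates $k_G(x,y)=\sqrt{\alpha^2+u^2}$, hence the $k$-balls $B_k(x,t)$ are precisely the Euclidean disks $\{\alpha^2+u^2<t^2\}$ centred at the origin, and everything reduces to locating $\partial B_s(x,r)$ relative to these disks.

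First I compute the boundary curve. From $s_G(x,y)=r$ and the law of cosines one gets $1+\rho^2-2\rho\cos\alpha=r^2(1+\rho)^2$ with $\rho=|y|$; this simplifies to $\cos\alpha=(1-r^2)\cosh u-r^2$, and the half-angle identities collapse it to the clean form
\[
  \cos(\alpha/2)=\sqrt{1-r^2}\,\cosh(u/2).
\]
This is the decisive simplification. The curve is symmetric under $u\mapsto-u$ (the inversion $y\mapsto y/|y|^2$ preserves both metrics), so I restrict to $u\ge0$. Writing $a=\alpha/2\in[0,\arcsin r]$ and $w=u/2\ge0$ linked by $\cos a=\sqrt{1-r^2}\cosh w$, the endpoints $a=\arcsin r$ (so $w=0$) and $a=0$ (so $w=\arth\,r$) give $k_G=2\arcsin r=m$ and $k_G=2\,\arth\,r=\log\frac{1+r}{1-r}=M$ respectively.

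The heart of the matter is to show that $\phi(a):=a^2+w(a)^2$ is strictly decreasing on $(0,\arcsin r)$, so that $k_G^2=4\phi$ runs monotonically between $m^2$ and $M^2$ along $\partial B_s(x,r)$. Differentiating the constraint yields $w'(a)=-\sin a/(\sqrt{1-r^2}\,\sinh w)$, whence
\[
  \phi'(a)=2\Bigl(a-\tfrac{w\sin a}{\sqrt{1-r^2}\,\sinh w}\Bigr).
\]
Substituting $\sqrt{1-r^2}=\cos a/\cosh w$ from the constraint reduces $\phi'(a)<0$ to
\[
  \frac{a}{\tan a}\cdot\frac{\tanh w}{w}<1,
\]
which holds because $\tan a>a$ on $(0,\pi/2)$ and $\tanh w<w$ for $w>0$, so each factor lies in $(0,1)$. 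I expect this monotonicity to be the only genuine obstacle; the half-angle identity is exactly what makes it elementary, since without it the equivalent inequality $\frac{u}{\sinh u}\cdot\frac{\sin\alpha}{\alpha}>1-r^2$ looks delicate.

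Finally I translate the extremal values into inclusions. The region $\{s_G<r\}$ is starlike about the origin in the $(\alpha,u)$–plane, because along any ray from the origin the quantity $\cos(\alpha/2)-\sqrt{1-r^2}\cosh(u/2)$ is strictly decreasing and changes sign exactly once; its boundary $\partial B_s(x,r)$ therefore lies at Euclidean distance between $m$ and $M$ from the origin, touching the circles of radius $m$ and $M$ at $(\alpha,u)=(2\arcsin r,0)$ and $(0,\pm M)$. Starlikeness then upgrades the extremal distances to the genuine inclusions $B_k(x,m)\subset B_s(x,r)\subset B_k(x,M)$, and the two touching points give the sharpness: the first at the point $y$ with $|y|=|x|$ and $\measuredangle(x,0,y)=2\arcsin r$, the second at the far collinear point $y=\frac{1+r}{1-r}x$.
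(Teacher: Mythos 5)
Your proposal is correct, and it is worth stressing that the paper does \emph{not} prove this statement: it is stated as a conjecture, accompanied only by an ``idea for the proof'' that reduces each inclusion to a one-variable inequality in $t=|y|/|x|$ (a function $f(t)\le 0$ for the first inclusion, $f(t)\le f(\tfrac{1+r}{1-r})$ for the second) which is left unverified and is not obviously tractable. Your route differs at exactly the point where the paper's sketch stalls: passing to logarithmic coordinates $u=\log|y|$ and applying the half-angle identities turns the boundary equation $\cos\alpha=(1-r^2)\cosh u-r^2$ into $\cos(\alpha/2)=\sqrt{1-r^2}\,\cosh(u/2)$, after which the extremality of $k_G^2=4(a^2+w^2)$ along the curve reduces to $\tfrac{a}{\tan a}\cdot\tfrac{\tanh w}{w}<1$, a product of two factors each lying in $(0,1)$. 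I checked the individual steps: the boundary equation, the half-angle reduction, the implicit differentiation $w'=-\sin a/(\sqrt{1-r^2}\sinh w)$, the endpoint values $m=2\arcsin r$ and $M=2\arth r=\log\tfrac{1+r}{1-r}$, the starlikeness of $\{\cos(\alpha/2)>\sqrt{1-r^2}\cosh(u/2)\}$ about the origin in the $(\alpha,u)$-plane (needed to convert the annulus containment of the boundary into the two ball inclusions), and the two touching points giving sharpness; all are sound, and the reduction to $n=2$, $|x|=1$ is legitimate since both metrics depend only on $(|x|,|y|,\measuredangle(x,0,y))$ and are dilation- and rotation-invariant. In short, your argument appears to settle Conjecture \ref{con3-1} affirmatively, which goes beyond what the paper establishes.
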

\begin{figure}[!ht]
\includegraphics[width=0.5\textwidth]{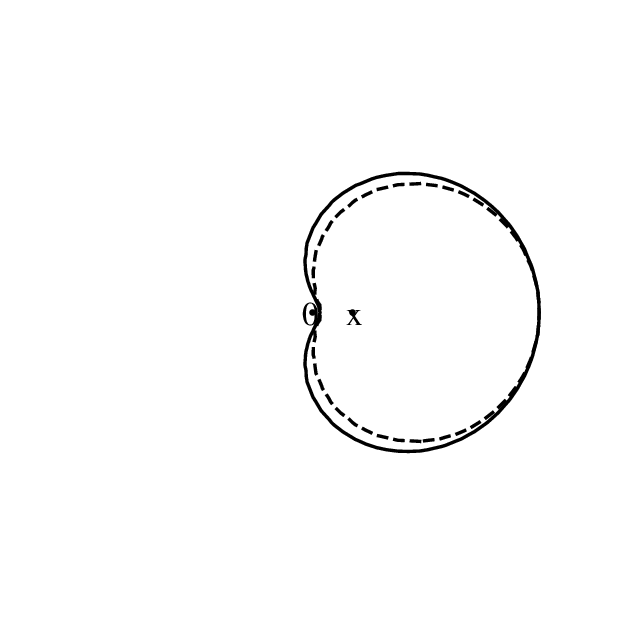}\hspace{5mm}
\includegraphics[width=0.4\textwidth]{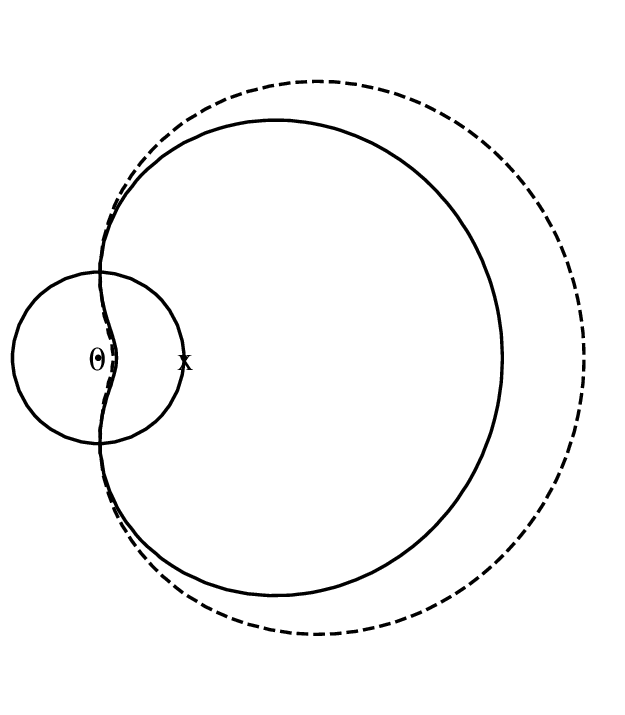}
\caption{This picture is in $\mathbb{R}^2\setminus\{0\}$ and the boundary of the $s$-metric disk in Conjecture \ref{con3-1} visualized as the dashed curve. In the left picture $x=2$, $r=0.7$, in the right one, $x=7$ and $r=0.7$. \label{fig3}}
\end{figure}

\noindent{\it Idea for the proof of Conjecture \ref{con3-1}.}  The first part. For $y\in B_k(x,m)$, let $\alpha=\angle(x0y)$. Then $\alpha\leq \sqrt{m^2-\log^2(\frac{|x|}{|y|})}$, and it suffices to prove
$$s_G(x,y)< \sin\frac{m}{2},$$
that is
$$\frac{|x-y|^2}{(|x|+|y|)^2}\leq  \frac{|x|^2+|y|^2-2|x||y|\cos\sqrt{m^2-\log^2(\frac{|x|}{|y|})}}{|x|^2+|y|^2+2|x||y|}\leq \sin^2\frac{m}{2} $$
and thus only need to prove the following:
$$f(t)= \left( 1-\sin^2\frac{m}{2} \right) + \left( 1-\sin^2\frac{m}{2} \right) t^2$$$$-2 \left( \sin^2\frac{m}{2} +\cos\sqrt{m^2-\log^2t} \right) t\leq 0,$$ where $m\in[0,\pi/2]$ and $t\in[0, e^m]$.

 The second part. For $y\in B_s(x,r)$, let $\alpha=\angle(x0y)$. Then \begin{eqnarray*}\alpha &=&\arccos \frac{|x|^2+|y|^2-|x-y|^2}{2|x||y|}\\
 &\leq& \arccos \frac{(1-r^2)|x|^2+(1-r^2)|y|^2-2r^2|x||y|}{2|x||y|}.\end{eqnarray*}

 It suffices to prove $$f(t)=\arccos^2 \left( \frac{1-r^2}{2}t+\frac{1-r^2}{2 t}-r^2 \right) +\log^2t\leq f \left( \frac{1+r}{1-r} \right)$$$$ =f \left( \frac{1-r}{1+r} \right) ,$$ where $r\in(0,1]$ and $t\in(\frac{1-r}{1+r},\frac{1+r}{1-r}).$

\begin{figure}[!ht]
\includegraphics[width=0.4\textwidth,height=0.4\textwidth]{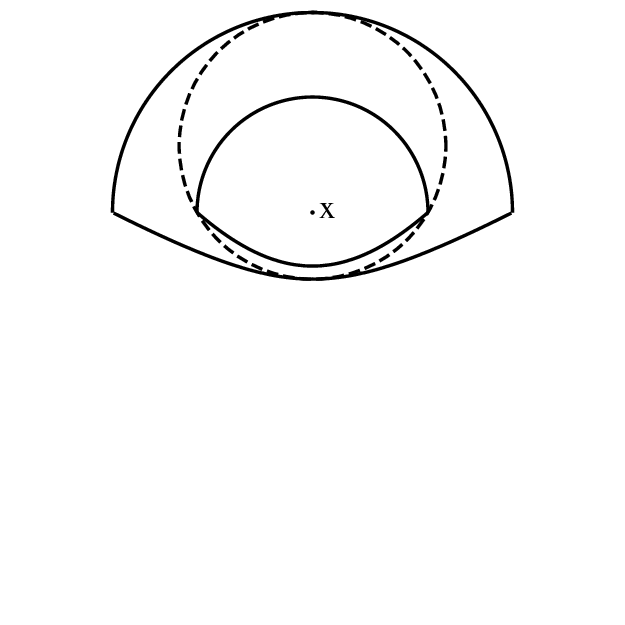}
\caption{This picture is in $\mathbb{H}^2$ and the boundary of the $s$-metric disk in Theorem \ref{thm3-6} visualized as the dashed curve. Here $x=2I$ and $r=0.5.$ \label{fig5}}
\end{figure}

%\section{Half-space}

For $x\in G=\mathbb{H}^n,$ by \cite[(2.11)]{Vu2} we have $B_k(x,r)= B^n(z, |x|\sinh r)$ with $z=|x|e_n \cosh r$, and we know that $B_s(x,r)=B^n(x-e_n x_n(1-\frac{1+r^2}{1-r^2}), \frac{2x_nr}{1-r^2})$, then it is easy to get the following Lemma.

\begin{lemma}\label{lem3-1} Let $x\in G=\mathbb{H}^n$ and $r\in(0,1)$. Then $$B_s(x,r)=B_k(x, \log(1+\frac{2r}{1-r})).$$
\end{lemma}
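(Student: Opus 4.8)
The plan is to exploit the fact that both sides of the claimed identity are \emph{Euclidean} balls whose centers and radii are already available, so that the whole statement collapses to matching a center and a radius. First I would use the invariance of $\Hn$ under horizontal translations and rotations about the vertical axis to reduce to the case $x = x_n e_n$; then $|x| = x_n$ and the formula $B_k(x,R) = B^n(|x| e_n \cosh R, |x|\sinh R)$ quoted from \cite[(2.11)]{Vu2} applies verbatim, where I also use the standard fact that the quasihyperbolic and hyperbolic metrics coincide on $\Hn$ (both arise from $|dz|/z_n$), so $k_{\Hn} = \rho_{\Hn}$.

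Next, set $R = \log\left(1 + \frac{2r}{1-r}\right) = \log\frac{1+r}{1-r}$, so that $e^R = \frac{1+r}{1-r}$. Theorem \ref{halfplanelemma} gives, in the reduced case,
\[
  B_s(x,r) = B^n\left( x_n e_n \frac{1+r^2}{1-r^2},\ \frac{2 x_n r}{1-r^2} \right),
\]
while the cited formula gives $B_k(x,R) = B^n\left( x_n e_n \cosh R,\ x_n \sinh R \right)$. The heart of the argument is then the elementary hyperbolic identity obtained from $e^R = \frac{1+r}{1-r}$:
\[
  \cosh R = \frac{1}{2}\left( \frac{1+r}{1-r} + \frac{1-r}{1+r} \right) = \frac{1+r^2}{1-r^2}, \qquad
  \sinh R = \frac{1}{2}\left( \frac{1+r}{1-r} - \frac{1-r}{1+r} \right) = \frac{2r}{1-r^2}.
\]
Substituting these into $B_k(x,R)$ shows that its center $x_n e_n \cosh R$ equals $x_n e_n \frac{1+r^2}{1-r^2}$ and its radius $x_n \sinh R$ equals $\frac{2 x_n r}{1-r^2}$, which are exactly the center and radius of $B_s(x,r)$. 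Since two Euclidean balls with equal centers and equal radii coincide, the identity follows; this is essentially the same computation already carried out for $\rho$ in the corollary preceding the lemma.

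I do not anticipate a genuine obstacle: once one observes that $B_s(x,r)$ (via Theorem \ref{halfplanelemma}) and $B_k(x,R)$ (via \cite[(2.11)]{Vu2} together with $k_{\Hn}=\rho_{\Hn}$) are both Euclidean balls, the proof is a one-line verification of the $\cosh$/$\sinh$ values above. The only points requiring any care are the reduction by symmetry that secures $|x| = x_n$, and the invocation of $k_{\Hn} = \rho_{\Hn}$, both of which are routine for the half-space.
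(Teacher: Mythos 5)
Your proposal is correct and follows essentially the same route as the paper: both identify $B_s(x,r)$ as a Euclidean ball via Theorem \ref{halfplanelemma} and $B_k(x,R)$ as a Euclidean ball via \cite[(2.11)]{Vu2} (using $k_{\Hn}=\rho_{\Hn}$), then match centers and radii through $\cosh R=\frac{1+r^2}{1-r^2}$ and $\sinh R=\frac{2r}{1-r^2}$ for $R=\log\frac{1+r}{1-r}$. The paper leaves this verification implicit (``it is easy to get''), so your write-up simply supplies the details the authors omitted.
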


\begin{theorem}\label{thm3-6}Let $G=\mathbb{H}^n$. For each $x\in G$ and $r\in (0,1)$, we have $$B_j(x, m)\subset B_s(x, r)\subset B_j(x, M),$$ where $m=\log(1+\frac{2r}{\sqrt{1-r^2}})$, and $M=\log(1+\frac{2r}{1-r})$.  Moreover, the inclusions are sharp (see Figure \ref{fig5}).
\end{theorem}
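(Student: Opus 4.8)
The plan is to prove the two inclusions separately, exploiting the explicit Euclidean description of $B_s(x,r)$ from Theorem~\ref{halfplanelemma} together with the identity $B_s(x,r)=B_k(x,M)$ supplied by Lemma~\ref{lem3-1}. Throughout I would use that both $s_G$ and $j_G$ depend on $y$ only through the height $y_n$ and the horizontal distance $|y'-x'|$, and that both are invariant under the scaling $z\mapsto\mu z$ of $\Hn$; this lets me reduce to the plane cross-section and normalize $x_n=1$.

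The right-hand inclusion $B_s(x,r)\subset B_j(x,M)$ is almost immediate. By Lemma~\ref{lem3-1}, $B_s(x,r)=B_k(x,M)$ since $M=\log(1+\frac{2r}{1-r})$, and because $j_G(x,y)\le k_G(x,y)$ always holds, any $y$ with $k_G(x,y)<M$ also satisfies $j_G(x,y)<M$, giving $B_k(x,M)\subset B_j(x,M)$. For sharpness I would use the vertical segment through $x$ orthogonal to $\partial\Hn$: along this quasihyperbolic geodesic $j_G$ and $k_G$ coincide, both equalling the logarithm of the height ratio. Hence the top point $w$ of the Euclidean ball $B_s(x,r)$, at height $x_n\frac{1+r}{1-r}$, satisfies $j_G(x,w)=k_G(x,w)=\log\frac{1+r}{1-r}=M$ while $w\in\partial B_s(x,r)$, so $M$ cannot be decreased.

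For the left-hand inclusion $B_j(x,m)\subset B_s(x,r)$ the direction of $j\le k$ is useless, so I would argue directly on the normalized cross-section. By Theorem~\ref{halfplanelemma}, $B_s(x,r)$ is the disk centered at height $c=\frac{1+r^2}{1-r^2}$ of radius $R=\frac{2r}{1-r^2}$, whose slice at height $s$ has half-width $\sqrt{R^2-(s-c)^2}$. With $\lambda=e^m-1=\frac{2r}{\sqrt{1-r^2}}$, the ball $B_j(x,m)$ splits into an upper part $\{t^2+(s-1)^2<\lambda^2\}$ for $s\ge1$ and a lower part $\{t^2+(s-1)^2<\lambda^2 s^2\}$ for $s\le1$. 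Comparing half-widths height by height reduces the claim to the inequalities $R^2-(s-c)^2\ge\lambda^2-(s-1)^2$ on $s\ge1$ and $R^2-(s-c)^2\ge\lambda^2 s^2-(s-1)^2$ on $s\le1$. Using $R^2-\lambda^2=\frac{4r^4}{(1-r^2)^2}$, $c-1=\frac{2r^2}{1-r^2}$ and the clean identity $c^2-R^2=1$, these collapse respectively to $\frac{4r^2}{1-r^2}(s-1)\ge0$ and $\frac{4r^2}{1-r^2}s(1-s)\ge0$, both valid on the relevant ranges. Equality holds only at $s=1$, which identifies the sharpness point as the point at height $x_n$ and horizontal distance $\frac{2r}{\sqrt{1-r^2}}x_n$ from $x$, lying on both $\partial B_j(x,m)$ and $\partial B_s(x,r)$.

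The main obstacle is this left-hand inclusion, since the monotonicity $j_G\le k_G$ pushes the wrong way and one must control the genuinely non-Euclidean lower part of the $j$-ball (whose boundary distance is measured at $y$ rather than at $x$) against the Euclidean disk $B_s$. The decisive simplifications are that the two balls share the same vertical extremal heights and that the identity $c^2-R^2=1$ forces the two height-dependent comparisons to reduce to manifestly nonnegative quadratics; verifying these both establishes the inclusion and pins down exactly where it is attained.
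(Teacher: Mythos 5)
Your argument is correct, and for the harder half it takes a genuinely different route from the paper. The paper's proof is a two-line citation: it invokes Lemma \ref{lem3-1} to identify $B_s(x,r)$ with $B_k(x,M)$, gets the right-hand inclusion from $j_G\le k_G$ exactly as you do, and then obtains the left-hand inclusion $B_j(x,m)\subset B_k(x,M)=B_s(x,r)$ by appealing to \cite[Theorem 4.2]{KV}, which supplies the $j$--$k$ ball inclusion in the half-space with precisely the constant $m=\log(1+\tfrac{2r}{\sqrt{1-r^2}})$. You instead prove that inclusion from scratch by slicing both balls at each height $s$ and comparing half-widths, using the explicit Euclidean data $c=\tfrac{1+r^2}{1-r^2}$, $R=\tfrac{2r}{1-r^2}$ from Theorem \ref{halfplanelemma}; I checked that your reductions to $\tfrac{4r^2}{1-r^2}(s-1)\ge0$ for $s\ge1$ and $\tfrac{4r^2}{1-r^2}s(1-s)\ge0$ for $0<s\le1$ are exact (the identities $R^2-\lambda^2=\tfrac{4r^4}{(1-r^2)^2}$, $c-1=\tfrac{2r^2}{1-r^2}$, $c^2-R^2=1$ all hold), and your sharpness points for both inclusions are the right ones. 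What your approach buys is self-containedness and an explicit identification of where equality occurs; what the paper's buys is brevity at the cost of an external reference. One small inaccuracy in your closing remarks: the two balls do \emph{not} share the same vertical extremal heights (e.g.\ the top of $B_j(x,m)$ is at height $1+\tfrac{2r}{\sqrt{1-r^2}}$ while the top of $B_s(x,r)$ is at $\tfrac{1+r}{1-r}$, and these differ for $r\in(0,1)$); fortunately nothing in your height-by-height comparison uses this claim, since the inequality between half-widths squared automatically forces the $j$-slice to be empty wherever the $s$-slice is, so the proof stands.
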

\begin{proof}
It follows from Lemma \ref{lem3-1} and \cite[Theorem 4.2]{KV} and the fact that $B_k(x,r)\subset B_j(x,r)$.

\end{proof}

\begin{conjecture}Let $G=\mathbb{B}^n$. For each $x\in G$ and $r\in (0,1)$, we have $$B_j(x, m)\subset B_s(x, r)\subset B_j(x, M),$$ where $m=\log(1+2r)$, and $M=\log(1+\frac{2r}{1-r})$.  Moreover, the second inclusion is sharp.
\end{conjecture}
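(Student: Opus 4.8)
The first inclusion requires no new work: since $B^n$ is a domain in $\R^n$, Theorem \ref{thm2-1} applied to $G=B^n$ gives $B_j(x,\log(1+2r))\subset B_s(x,r)$ directly. So the plan is to concentrate on the second inclusion and its sharpness.

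For the second inclusion I would first rewrite the target radius in a more transparent form. Since $1+\frac{2r}{1-r}=\frac{1+r}{1-r}$, we have $M=\log\frac{1+r}{1-r}=2\arth r$, and because $2\arth$ is increasing, the inclusion $B_s(x,r)\subset B_j(x,M)$ is equivalent to the pointwise comparison
$$j_{B^n}(x,y)\le 2\arth\bigl(s_{B^n}(x,y)\bigr),\qquad x,y\in B^n.$$
The key observation is that this comparison actually holds in \emph{every} proper subdomain $G$ of $\R^n$, and the ball is just a special case, so the bound $\log(1+\frac{2r}{1-3r})$ of Theorem \ref{thm2-1} can be sharpened to $M$ for all $r\in(0,1)$. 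Write $u=|x-y|$ and $d=\min\{d_G(x),d_G(y)\}$, and assume $d=d_G(x)$. Let $z_0\in\partial G$ realise $|z_0-x|=d_G(x)=d$ (for $G=B^n$ this is the radial projection $z_0=x/|x|$). By the triangle inequality $|z_0-y|\le d+u$, so $|z_0-x|+|z_0-y|\le 2d+u$, and hence, using $z_0$ as a test point in the supremum defining $s$,
$$s_G(x,y)\ge\frac{u}{|z_0-x|+|z_0-y|}\ge\frac{u}{2d+u}.$$
Feeding this lower bound into the increasing function $t\mapsto\frac{1+t}{1-t}$ gives $\frac{1+s}{1-s}\ge 1+\frac{u}{d}$, which is precisely $2\arth(s_G(x,y))\ge\log(1+\frac{|x-y|}{d})=j_G(x,y)$. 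Specialising to $G=B^n$ yields the second inclusion.

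For the sharpness I would exhibit the extremal pair explicitly. By rotational symmetry take $x=a e_1$ and look for $y=b e_1$ with $a<b<1$ on the same radius. Here the sum $|z-x|+|z-y|$ over $z\in S^{n-1}$ is minimised at the radial exit point $z=e_1$, since each summand $\sqrt{1-2a\cos\theta+a^2}$ and $\sqrt{1-2b\cos\theta+b^2}$ (writing $z=\cos\theta\,e_1+\sin\theta\,e_2$) is individually minimised at $\theta=0$; thus $s(x,y)=\frac{b-a}{2-a-b}$ \emph{exactly}. Solving $s(x,y)=r$ gives $b=\frac{a(1-r)+2r}{1+r}$, and a short computation then yields $b-a=\frac{2r(1-a)}{1+r}$ and $1-b=\frac{(1-r)(1-a)}{1+r}$, so that $\frac{b-a}{1-b}=\frac{2r}{1-r}$, i.e. $j(x,y)=\log(1+\frac{2r}{1-r})=M$. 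Hence this $y$ lies simultaneously on $\partial B_s(x,r)$ and on $\partial B_j(x,M)$, proving that the second inclusion cannot be improved.

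All the computations here are elementary; the only point that needs genuine care is the sharpness step, where one must confirm that the supremum in the definition of $s$ is attained \emph{exactly} at the radial exit point $z=e_1$ (and is not merely bounded there), because the sharp value of $s(x,y)$, and not just the lower bound used for the inclusion, is needed. I expect this verification of the exact minimiser of $z\mapsto|z-x|+|z-y|$ on the sphere, together with the routine check that the extremal $y$ stays inside $B^n$, to be the main obstacle.
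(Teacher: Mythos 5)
This statement is left as a \emph{conjecture} in the paper --- there is no proof of it in the text --- so there is nothing to compare your argument against; what you have written appears to be a correct proof of the conjecture, and in fact of something stronger. Your key step is the pointwise inequality $j_G(x,y)\le 2\arth\bigl(s_G(x,y)\bigr)$, obtained by testing the supremum in the definition of $s_G$ at a nearest boundary point $z_0$ of whichever of $x,y$ is closer to $\partial G$, which gives $s_G(x,y)\ge \frac{u}{2d+u}=\tanh\frac{j_G(x,y)}{2}$ with $u=|x-y|$, $d=\min\{d_G(x),d_G(y)\}$; this is valid in \emph{every} proper subdomain, so the second inclusion $B_s(x,r)\subset B_j(x,\log\frac{1+r}{1-r})$ is not special to the ball at all. (This lower bound for $s_G$ is essentially the one appearing in the cited reference [CHKV]; as you note, it also supersedes the second inclusion of Theorem \ref{thm2-1}, replacing $\log(1+\frac{2r}{1-3r})$ for $r<1/3$ by the smaller radius $M$ for all $r\in(0,1)$, and it recovers the sharp half-space radius of Theorem \ref{thm3-6}.) Your sharpness computation is also sound: for $x=ae_1$, $y=be_1$ with $0\le a<b<1$ the minimizer of $z\mapsto|z-x|+|z-y|$ on $S^{n-1}$ is indeed $z=e_1$, since each summand is separately minimized there, so $s(x,y)=\frac{b-a}{2-a-b}$ exactly and your algebra giving $j(x,y)=M$ checks out. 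Two small points worth making explicit: the extremal $y$ you produce lies on $\partial B_s(x,r)$ rather than in the open ball, so one should add the routine continuity remark that $y_\varepsilon=(b-\varepsilon)e_1\in B_s(x,r)$ has $j(x,y_\varepsilon)\to M$ (this is the same convention the paper uses for sharpness in Theorems \ref{thm3-1} and \ref{thm3-5}); and the degenerate case $x=0$ should be mentioned, though your formulas cover it with $a=0$.
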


%    Bibliographies can be prepared with BibTeX using amsplain,
%    amsalpha, or (for "historical" overviews) natbib style.
%\bibliographystyle{amsplain}
%    Insert the bibliography data here.

\noindent\textbf{Acknowledgement.} The research was supported by the Academy of Finland (project 209539).

\end{document}